\renewcommand{\ge}{\geqslant}
\theoremstyle{plain}
\newtheorem{thm}{\protect\theoremname}
  \theoremstyle{definition}
  \theoremstyle{remark}
  \newtheorem{rem}[thm]{\protect\remarkname}
  \theoremstyle{plain}
  \newtheorem{prop}[thm]{\protect\propositionname}
  \theoremstyle{plain}
  \theoremstyle{plain}
  \newtheorem{lem}[thm]{\protect\lemmaname}
  \theoremstyle{remark}
\numberwithin{equation}{section}
\numberwithin{thm}{section}
\providecommand{\claimname}{Claim}
  \providecommand{\corollaryname}{Corollary}
  \providecommand{\lemmaname}{Lemma}
  \providecommand{\propositionname}{Proposition}
  \providecommand{\remarkname}{Remark}
  \providecommand{\theoremname}{Theorem}
\providecommand{\theoremname}{Theorem}
\subjclass[2020]{Primary 35Q51, 35Q53, 35B40; Secondary 37K10, 37K40}
\keywords{mKdV equation, breather, soliton, multi-breather, multi-soliton, N-breather, stability}
\begin{document}

\parindent=5mm

\begin{abstract}
In this article, we prove that a sum of solitons and breathers of
the modified Korteweg-de Vries equation (mKdV) is orbitally stable. The orbital stability is shown in
$H^{2}$. More precisely, we will show that if a solution of (mKdV)
is close enough to a sum of solitons and breathers with distinct velocities
at $t=0$ in the $H^{2}$ sense, then it stays close to this sum of
solitons and breathers for any time $t\geq0$ in the $H^2$ sense, up to space translations for solitons or
space and phase translations for breathers, provided the condition that the considered solitons and breathers
are sufficiently decoupled from each other and that  the velocities of the considered breathers are all positive, except possibly one. 
The constants that appear in this stability result do not depend on translation parameters.

From this, we deduce the orbital stability of any multi-breather of
(mKdV), provided the condition that the velocities of the considered breathers are all positive, except possibly one (the condition about the decoupling of the considered solitons and breathers between each other is not required in this setting). 
The constants that appear in this stability result depend on translation parameters of the considered solitons and breathers.
\end{abstract}

\title{Orbital stability of a sum of solitons and breathers of the modified Korteweg-de Vries equation}

\author{Alexander Semenov}

\maketitle

\section{Introduction}

\subsection{Setting of the problem}

We consider the modified Korteweg-de Vries equation: 
\begin{equation} \tag{mKdV}
\label{2mKdV}
\begin{cases}
\begin{array}{c}
u_{t}+(u_{xx}+u^{3})_{x}=0,\quad(t,x)\in\mathbb{R}\times\mathbb{R},\\
u(0,x)=u_{0}(x),\quad x\in\mathbb{R},
\end{array}\end{cases}
\end{equation}

for $u_{0}\in H^{2}(\mathbb{R})$.

\eqref{2mKdV} appears as a good approximation of some physical problems as
ferromagnetic vortices \cite{key-47}, fluid mechanics \cite{key-48},
electrodynamics \cite{key-43}, plasma physics \cite{key-40,key-42},
etc.

The Cauchy problem for \eqref{2mKdV} is locally well-posed in $H^{s}$ for
$s>-\frac{1}{2}$ \cite{HGKV}. For $s>\frac{1}{4}$, the Cauchy
problem is globally well-posed \cite{key-39}. In this paper, we will
use only basic results about the Cauchy problem: the fact that it
is globally well-posed in $H^{1}$ or $H^{2}$.

Note that the set of solutions of \eqref{2mKdV} is stable under space or
time translations or under reflexions with respect to the $x$-axis.

We have following conservation laws for a solution $u(t)$ of \eqref{2mKdV}:
\begin{equation}
M[u](t):=\frac{1}{2}\int u^{2}(t,x)dx,
\end{equation}
\begin{equation}
E[u](t):=\frac{1}{2}\int u_{x}^{2}(t,x)dx-\frac{1}{4}\int u^{4}(t,x)dx,
\end{equation}
\begin{equation}
F[u](t):=\frac{1}{2}\int u_{xx}^{2}(t,x)dx-\frac{5}{2}\int u^{2}(t,x)u_{x}^{2}(t,x)dx+\frac{1}{4}\int u^{6}(t,x)dx.
\end{equation}

Note that \eqref{2mKdV} has actually infinitely many conservation laws, because
it is integrable, like the original Korteweg-de Vries equation (KdV), which has quadratic nonlinearity \cite{key-27,key-28}. It is also a
special case of the generalized Korteweg-de Vries equation (gKdV) \cite{key-2}.

(gKdV) belongs to the family of nonlinear (focusing) dispersive equations. Other examples of equations belonging to this family are the nonlinear Schrödinger equation (NLS) \cite{key-3,key-6,key-22} and the nonlinear Klein-Gordon equation (KG) \cite{key-11,key-16}. They share a common property: they all admit special solutions called solitons, 
a bump that translates with a constant velocity without deformation. However, \eqref{2mKdV} enjoys a specific feature: it admits another
class of special solutions called breathers, which we will describe below.
We will consider here both specific solutions of \eqref{2mKdV} together: solitons and
breathers.

For $c>0$, $\kappa\in\{-1,1\}$ and $x_{0}\in\mathbb{R}$, a \emph{soliton}
parametrized by $c$, $\kappa$ and $x_{0}$ is a solution of \eqref{2mKdV}
that corresponds to a bump of constant shape that translates with a constant velocity $c$ without deformation, that is initially centered in $x_0$ and that has for sign $\kappa$. In other words, it is a solution of the form:
\begin{align}
R_{c,\kappa}(t,x;x_{0}):=\kappa Q_c(x-ct-x_0),
\end{align}
where $Q_c$ is the profile function that depends only on one variable and that is positive and pair. $Q_c\in H^1(\mathbb{R})$ should solve the following elliptic equation:
\begin{align}
\label{2GS}
Q_c''-cQ_c+Q_c^3=0.
\end{align}

It is possible to show that \eqref{2GS} has a unique solution in $H^1$, up to translations and reflexion with respect to the $x$-axis. One can show that $Q_c$ has the following expression:
\begin{align}
Q_c(x):=\left(\frac{2c}{\cosh^2(\sqrt{c}x)}\right)^\frac{1}{2}.
\end{align}

We denote $Q:=Q_1$ the basic ground state.

When $\kappa=-1$, the soliton is sometimes called an \emph{antisoliton}.

The soliton $R_{c,\kappa}(x_{0})$ travels with velocity $c$ to the right, the position of its center at a time $t$ is $x_{0}+ct$. It is exponentially localized,
depending on $c$ and $x_{0}+ct$ (the position of the bound depends
on $x_{0}+ct$, the amplitude and the exponential decay rate depend
on $c$): 
\begin{equation}
\left|R_{c,\kappa}(t,x;x_{0})\right|\leq\sqrt{2c}\exp\left(-\sqrt{c}\left|x-x_{0}-ct\right|\right).
\end{equation}
Analoguous bounds are valid for any derivative (with the same decay rate but different amplitude). This motivates the terminology ``shape parameter'' for $c$.

Solitons, in particular their stability, have been extensively studied: regarding orbital stability (in $H^1$), we refer to
Cazenave, Lions \cite{key-10} and Weinstein \cite{key-6,key-22} for (NLS) and Weinstein \cite{key-6}, Bona-Souganidis-Strauss \cite{Bona} and Martel-Merle \cite{MM} for \eqref{2mKdV}, see also Grillakis-Shatah-Strauss \cite{key-13} for a result in an abstract setting. Asymptotic stability (in $H^1$) of \eqref{2mKdV} solitons was shown by Martel-Merle \cite{key-20,key-21,key-23} and refined by Germain-Pusateri-Rousset \cite{key-29}.

For $\alpha,\beta>0$ and $x_{1},x_{2}\in\mathbb{R}$, a \emph{breather}
parametrized by $\alpha,\beta,x_{1},x_{2}$ is a solution of \eqref{2mKdV}
that has the following expression: 
\begin{equation}
B_{\alpha,\beta}(t,x;x_{1},x_{2}):=2\sqrt{2}\partial_{x}\left[\arctan\left(\frac{\beta}{\alpha}\frac{\sin(\alpha y_{1})}{\cosh(\beta y_{2})}\right)\right],
\end{equation}
where $y_{1}:=x+\delta t+x_{1}$, $y_{2}:=x+\gamma t+x_{2}$, $\delta:=\alpha^{2}-3\beta^{2}$
and $\gamma:=3\alpha^{2}-\beta^{2}$.

The breather $B_{\alpha,\beta}(x_{1},x_{2})$ travels with velocity $-\gamma$; the position of its center at a time $t$ is $-x_{2}-\gamma t$.
It is exponentially localized, depending on $\alpha$,
$\beta$ and $-x_{2}-\gamma t$ (the position of the bound depends
on $-x_{2}-\gamma t$, the coefficient depends on $\alpha$ and $\beta$,
and the exponential decay rate depends on $\beta$): 
\begin{equation}
\left|B_{\alpha,\beta}(t,x;x_{1},x_{2})\right|\leq C(\alpha,\beta)\exp\left(-\beta\left|x+x_{2}+\gamma t\right|\right),
\end{equation}
Analogous bounds are valid for any derivative (with the same decay rate but different amplitude). This motivates the terminology ``shape'' and ``frequency'' parameters for $\beta$ and $\alpha$, respectively.

One doesn't talk of ``antibreathers'', because if we replace
$x_{1}$ by $x_{1}+\frac{\pi}{\alpha}$, then a breather is transformed in its opposite.

Similarly to (\ref{2GS}), it is known that a breather $B=B_{\alpha,\beta}$
satisfies the following elliptic equation on $\mathbb{R}$:
\begin{equation}
B_{xxxx}+5BB_{x}^{2}+5B^{2}B_{xx}+\frac{3}{2}B^{5}-2\left(\beta^{2}-\alpha^{2}\right)\left(B_{xx}+B^{3}\right)+\left(\alpha^{2}+\beta^{2}\right)^{2}B=0.
\end{equation}

Note that in \cite{key-49} a similar form of elliptic equation (with a fourth order derivative) is obtained for a soliton $R_{c,\kappa}$ from \eqref{2GS}. This allows us to consider solitons and breathers at the same  level of regularity, which is the key of the proof made in this article.

This object was first introduced by Wadati \cite{key-31}, and it
was used by Kenig, Ponce and Vega in \cite{key-45} for the ill-posedness for \eqref{2mKdV} for rough data.
Their properties, in particular their stability, are well studied
by Alejo, Muñoz and co-authors \cite{key-1,key-15,key-14,key-17,key-18}.
We know that a breather is orbitally stable in $H^{2}$ \cite{key-1}.
Afterwards, $H^{1}$ orbital stability was proved via Bäcklund transformation \cite{key-18}, and also
$H^{1}$ asymptotic stability, for breathers moving
to the right. Asymptotic stability of breathers in full generality is still an open problem.

When $\alpha \to 0$, $B_{\alpha,\beta}$ tends to a \emph{double-pole
solution} of \eqref{2mKdV}: it is a couple soliton-antisoliton that move
with a constant velocity and that have a repulsive logarithmic interaction
\cite{key-37}. However, this limit, which is somehow at a boundary
between solitons and breathers, is expected to be unstable \cite{key-46}.
We do not consider this object in this paper.

Solitons and breathers are important objects to study because of their stability properties and also because of the soliton-breather resolution. The latter is an important result about the long time dynamics of \eqref{2mKdV}, which asserts that any generic solution can be approached by a sum of solitons and breathers when $t\rightarrow +\infty$. It is established for initial conditions in a weighted Sobolev space in \cite{key-25} (see also Schuur \cite{key-34}) by inverse scattering method; see also \cite{key-34} for the soliton resolution for (KdV).

Given a set of basic objects (solitons and breathers), we consider
a solution that tends to this sum when $t \rightarrow+\infty$, called
\emph{multi-breather}. In \cite{key-49}, we have shown existence,
regularity and uniqueness of multi-breathers of \eqref{2mKdV}. There is also
a formula for multi-breathers of \eqref{2mKdV} obtained by Wadati \cite{key-37}, which was derived as a consequence of the integrability of \eqref{2mKdV}); but is it not
well suited for our purpose.


 Martel, Merle and Tsai \cite{key-8}
proved that a sum of (decoupled and ordered) solitons is orbitally stable in $H^{1}$ for \eqref{2mKdV}, and actually asymptotically stable (in the region $x \ge \delta t$ for $\delta>0$ small). Le Coz \cite{key-55} has established stability of  \eqref{2mKdV} $N$-solitons in $H^N$ by modifying the approach used by Maddocks
and Sachs \cite{key-24} for (KdV). 

Inspired by \cite{key-8}, similar asymptotic stability results where obtained for sums of (decoupled) solitons for various nonlinear dispersive equations: we refer to El Dika \cite{key-64} for the Benjamin-Bona-Mahony equation (BBM), Kenig and Martel \cite{key-65} for the Benjamin-Ono equation (BO), El Dika-Molinet \cite{peakon} for the Camassa-Holm multipeakon, and Côte-Muñoz-Pilod-Simpson \cite{key-63} for the Zakharov-Kuznetsov (ZK) equation.

Because breathers have an $H^{2}$ structure, we prove orbital
stability in $H^{2}$ in this paper for a sum of solitons and breathers.
One of the difficulties is to obtain $H^{2}$ stability results for
solitons too, i.e. to study solitons at a $H^{2}$ level.

\subsection{Main results}
\label{sec:main}

We prove in this article that given any sum of solitons and breathers
with distinct velocities and such that all these velocities except
possibly one are positive, a solution $u$ of \eqref{2mKdV} that is initially
close to this sum in $H^{2}$ stays close to this sum for any time of a considered time interval
up to space translations for solitons or space and phase translations
for breathers. This is orbital stability. Let us make the definition
of orbital stability more precise.

Let $L\in\mathbb{N}$. We consider a set of
$L$ solitons: given $c_{l}^{0}>0$, $\kappa_{l}\in\{-1,1\}$ and
$x_{0,l}^{0}\in\mathbb{R}$ for $1\leq l\leq L$, we set, for $1\leq l\leq L$,
\begin{equation}
R_{l}(t,x):=R_{c_{l}^{0},\kappa_{l}}(t,x;x_{0,l}^{0}).\label{2eq:sol}
\end{equation}

Let $K\in\mathbb{N}$. We consider a set of $K$ breathers: given
$\alpha_{k}>0$, $\beta_{k}>0$ and $x_{1,k}^{0},x_{2,k}^{0}\in\mathbb{R}$
for $1\leq k\leq K$, we set, for $1\leq k\leq K$, 
\begin{equation}
B_{k}(t,x):=B_{\alpha_{k},\beta_{k}}(t,x;x_{1,k}^{0},x_{2,k}^{0}).\label{2eq:br}
\end{equation}

We now define important parameters for each of the objects of the problem.
For $1\leq l\leq L$, the velocity of the $l$th soliton is 
\begin{equation}
v_{l}^{s}:=c_{l}^{0},\label{2eq:vs}
\end{equation}
and for $1\leq k\leq K$, the velocity of the $k$th breather is 

\begin{equation}
v_{k}^{b}:=\beta_{k}^{2}-3\alpha_{k}^{2}.\label{2eq:vb}
\end{equation}

For $1\leq l\leq L$,
the center of the $l$th soliton is
\begin{equation}
x_{l}^{s}(t):=x_{0,l}^{0}+v_{l}^{s}t,\label{2eq:ps}
\end{equation}
and for $1\leq k\leq K$, the center of the $k$th breather is
\begin{equation}
x_{k}^{b}(t):=-x_{2,k}^{0}+v_{k}^{b}t.\label{2eq:pb}
\end{equation}

We set $J:=K+L$ the total number of objects in the problem. We
assume that $J\geq2$, because for $J=1$ the proof is already done
in \cite{key-1} when the only object is a breather, and in \cite{key-6}
when the only object is a soliton.

We assume that the velocities of our objects are all distinct, this
will imply that our objects are far from each other when time is large,
which is essential for our analysis. More precisely, 
\begin{equation}
\forall k\neq k',\quad v_{k}^{b}\neq v_{k'}^{b},\qquad\forall l\neq l',\quad v_{l}^{s}\neq v_{l'}^{s},\qquad\forall k,l,\quad v_{k}^{b}\neq v_{l}^{s}.\label{2diff}
\end{equation}

This allows us to define an increasing function 
\begin{equation}
\underline{v}:\{1,...,J\}\rightarrow\{v_{k}^{b},1\leq k\leq K\}\cup\{v_{l}^{s},1\leq l\leq L\}.
\end{equation}

The set $\{v_{1},...,v_{J}\}$ is thus the set of all the possible
velocities of our objects. We have 
\begin{equation}
v_{1}<v_{2}<...<v_{J}.\label{2eq:vord}
\end{equation}

We define, for $1\leq j\leq J$, $P_{j}$ as the object ($R_{l}$
or $B_{k}$) that corresponds to the velocity $v_{j}$, i.e. if $v_{j}=v_{l}^{s}$,
we set $P_{j}:=R_{l}$, and if $v_{j}=v_{k}^{b}$, we set $P_{j}:=B_{k}$.
So, $P_{1},...,P_{J}$ are the considered objects that are ordered
by increasing velocity.

We denote $x_{j}(t)$ the position (the center of mass) of $P_{j}(t)$.
More precisely, if $P_{j}=R_{l}$, we set $x_{j}(t):=x_{l}^{s}(t)$;
and if $P_{j}=B_{k}$, we set $x_{j}(t):=x_{k}^{b}(t)$.

We set 
\begin{equation}
R:=\sum_{l=1}^{L}R_{l},\quad B:=\sum_{k=1}^{K}B_{k},\quad P:=R+B=\sum_{j=1}^{J}P_{j}.\label{2eq:P}
\end{equation}

We need both notations: indexation by $k$ and $l$,
and indexation by $j$, and we keep these notations to avoid
ambiguity.



The main result that we will prove in this article is the following: a sum of decoupled solitons and breathers, with $v_2>0$ (that is, all travel to the right, but at most one can be static or travel to the left), is orbitally stable.
The precise statement is as follows. 
 
\begin{thm}
\label{2:THM:MAIN}Given a set of solitons and breathers (\ref{2eq:sol}), (\ref{2eq:br})
whose velocities (\ref{2eq:vs}) and (\ref{2eq:vb}) satisfy (\ref{2diff}),
and whose positions are set by (\ref{2eq:ps}) and (\ref{2eq:pb}),
we define the corresponding sum $P$ in (\ref{2eq:P}), and we define
$P_{j}$, $v_{j}$, $x_{j}$ a reindexation of the given set of solitons and
breathers such that (\ref{2eq:vord}).
 We assume that
\begin{equation}
v_{2}>0. \label{2pos}
\end{equation}
Then there exists
$A_{0},\theta_{0},D_{0},a_{0}>0$, constants (depending on $c_l^0$, $\alpha_k$, $\beta_k$, but not on $x_{0,l}^0$, $x_{1,k}^0$ or $x_{2,k}^0$), such that the following is true.
Let $D\geq D_{0}$ and $0\leq a\leq a_{0}$ 
such that 
\begin{equation}
\left\Vert u(0)-P(0)\right\Vert _{H^{2}}\leq a,\quad \text{and} \quad x_{j}(0)>x_{j-1}(0)+2D,\ \text{for all } j=2,\dots,J,\label{2assump:MAIN}
\end{equation}
for a solution $u\in \mathcal{C}(\mathbb{R},H^2(\mathbb{R}))$ of \eqref{2mKdV}. 

Then,
there exist $x_{0,l}(t),x_{1,k}(t),x_{2,k}(t)$ defined for any $t\geq0$
such that 
\begin{equation}
\forall t\geq0,\quad\left\Vert u(t)-\sum_{l=1}^{L}R_{c_{l}^{0},\kappa_{l}}(t,\cdot;x_{0,l}(t))-\sum_{k=1}^{K}B_{\alpha_{k},\beta_{k}}(t,\cdot;x_{1,k}(t),x_{2,k}(t))\right\Vert _{H^{2}}\leq A_{0}\left(a+e^{-\theta_{0}D}\right),
\end{equation}
with
\begin{equation}
\label{2deriv0}
\forall t \geq0,\quad \sum_{l=1}^L \left\vert x_{0,l}'(t) \right\vert + \sum_{k=1}^K \left( \vert x_{1,k}'(t) \vert + \vert x_{2,k}'(t) \vert \right) \leq CA_0 \left( a + e^{- \theta_{0}D}\right),
\end{equation}
for some constant $C>0$.
\end{thm}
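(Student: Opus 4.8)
The plan is to follow the now-classical energy-modulation strategy of Martel–Merle–Tsai \cite{key-8}, adapted to the $H^2$ setting forced by the breathers, combining it with the second-order elliptic equations satisfied by solitons (from \cite{key-49}) and breathers. The scheme is a bootstrap argument: we fix $A_0$ large and $D_0$ large, $a_0$ small, and define
\[
t^*:=\sup\Bigl\{\,t\ge 0:\ \exists\ x_{0,l},x_{1,k},x_{2,k}\ \text{on}\ [0,t]\ \text{with}\ \|u(s)-\widetilde P(s)\|_{H^2}\le A_0\bigl(a+e^{-\theta_0 D}\bigr)\ \forall s\in[0,t]\,\Bigr\},
\]
where $\widetilde P$ denotes the modulated sum. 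By local well-posedness and continuity, $t^*>0$; the goal is to show $t^*=+\infty$ by improving the bound on $[0,t^*]$ to $\tfrac12 A_0(a+e^{-\theta_0 D})$. First I would set up the modulation: by the implicit function theorem there exist $C^1$ parameters $x_{0,l}(t),x_{1,k}(t),x_{2,k}(t)$ such that the error $\varepsilon(t):=u(t)-\widetilde P(t)$ satisfies a suitable set of orthogonality conditions (one per translation parameter of each soliton, two per breather — say orthogonality of $\varepsilon$ to $\partial_{x}$ and $\partial_{\alpha\text{- or }\beta\text{-type}}$ directions, chosen as in \cite{key-1}); standard estimates then give $|x_{0,l}'|+|x_{1,k}'|+|x_{2,k}'|\lesssim \|\varepsilon(t)\|_{H^2}+e^{-\theta_0 D}$ on $[0,t^*]$, which is \eqref{2deriv0} once the main bound is proven.

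The heart of the argument is a coercive Lyapunov functional. For a single object $P_j$ one has, from \cite{key-1} for breathers and from the fourth-order formulation of \cite{key-49} together with \cite{key-55} for solitons, a combination of the conserved quantities $F$, $E$, $M$ — schematically $\mathcal H_j = F + \text{(const)}_j E + \text{(const)}_j M$ — whose second variation at $P_j$ is positive definite on the subspace cut out by the orthogonality conditions. I would localize these functionals: introduce smooth cutoffs $\phi_j$ partitioning $\mathbb R$ into $J$ moving windows separated by the well-separated positions $x_j(t)$ (with $x_j(t)-x_{j-1}(t)$ growing linearly in $t$ thanks to \eqref{2diff} and the initial gap $2D$), and define
\[
\mathcal F(t):=\sum_{j=1}^{J}\Bigl(F_j^{\mathrm{loc}}(t)+b_j E_j^{\mathrm{loc}}(t)+d_j M_j^{\mathrm{loc}}(t)\Bigr),
\]
with localized densities weighted by $\phi_j$ and coefficients $b_j,d_j$ chosen so that each block is coercive. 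The two key properties to establish are: (i) \emph{almost-conservation}, $\frac{d}{dt}\mathcal F(t)\le C e^{-\theta_0 D}$ on $[0,t^*]$ — here the time derivative of the cutoffs produces flux terms supported where two adjacent objects' tails overlap, hence exponentially small in the separation (this is where the ordering hypothesis and the separation speed enter, and where one must be careful because $F$ contains two derivatives: the localized virial-type identities generate terms with up to $\varepsilon_{xx}$ that must be controlled by the coercivity itself); and (ii) \emph{coercivity}, $\mathcal F(t)-\mathcal F_{\infty}\ge \lambda\|\varepsilon(t)\|_{H^2}^2 - C\|\varepsilon(t)\|_{H^2}^3 - Ce^{-\theta_0 D}$, modulo the finitely many bad directions killed by modulation and by subtracting off the (locally conserved) mass/energy of each piece.

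Combining (i) and (ii) with the initial smallness $\|\varepsilon(0)\|_{H^2}\le a + Ce^{-\theta_0 D}$ (the $e^{-\theta_0 D}$ coming from the interactions in $P(0)$) gives, on $[0,t^*]$,
\[
\lambda\|\varepsilon(t)\|_{H^2}^2 \le \mathcal F(t)-\mathcal F_\infty + C\|\varepsilon(t)\|_{H^2}^3 + Ce^{-\theta_0 D}\le C\bigl(a^2+e^{-\theta_0 D}\bigr)+C\|\varepsilon(t)\|_{H^2}^3,
\]
and since $\|\varepsilon\|_{H^2}\le A_0(a+e^{-\theta_0 D})$ is small by choice of $a_0,D_0$, the cubic term is absorbed, yielding $\|\varepsilon(t)\|_{H^2}\le \tfrac12 A_0(a+e^{-\theta_0 D})$ for $A_0$ large enough. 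This strictly improves the bootstrap bound, so $t^*=\infty$. The hypothesis $v_2>0$ is used precisely in the coercivity step: the sign condition on the conserved-quantity coefficients needed to make each localized block positive definite fails for an object that is too slow, but having at most one slow (leftmost, since the velocities are ordered) object means at most the $j=1$ block needs the "weak" coercivity available unconditionally, while blocks $j\ge 2$ use the strict positivity requiring positive velocity.

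The step I expect to be the main obstacle is the simultaneous handling of (i) and (ii) at the $H^2$ level: unlike the $H^1$ soliton-sum case, the functional $\mathcal F$ is fourth order, so the almost-conservation computation produces, via the localization, genuinely indefinite quadratic terms involving $\varepsilon_{xx}$ localized near the overlap regions — these are only exponentially small if one first knows $\|\varepsilon\|_{H^2}$ is bounded, which is exactly the bootstrap assumption, but one must check the constants do not degenerate; and the coercivity of a fourth-order quadratic form modulo finitely many directions, uniformly in the (large, time-dependent) translation parameters, requires the careful spectral analysis of the linearized operators around $Q_c$ and around $B_{\alpha,\beta}$ — reconciling the soliton spectral picture with the breather one at the same order of regularity is the crux, and is where the observation from \cite{key-49} that solitons also solve a fourth-order elliptic equation becomes indispensable.
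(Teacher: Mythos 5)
Your high-level scaffolding (bootstrap, modulation with orthogonality conditions, Lyapunov functionals built from $M,E,F$ localized along the moving objects, flux/monotonicity estimates, coercivity modulo finitely many directions, and the role of $v_2>0$) is the same as the paper's, and you correctly flag the $H^2$/fourth-order difficulty. But the specific architecture you propose --- a single functional $\mathcal F=\sum_j(F_j^{\mathrm{loc}}+b_jE_j^{\mathrm{loc}}+d_jM_j^{\mathrm{loc}})$ over $J$ \emph{windows}, almost conserved and coercive --- has a gap that I do not think can be repaired as stated. Coercivity of each block forces the coefficients: $b_j=2(\beta_k^2-\alpha_k^2)$, $d_j=(\alpha_k^2+\beta_k^2)^2$ for a breather and $b_j=2c_l$, $d_j=c_l^2$ for a soliton (otherwise $P_j$ is not a critical point of its block and an uncontrollable linear term survives). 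Writing your sum by Abel summation in terms of half-line quantities, its almost-decay requires the sequences $(b_j)$ and $(d_j)$ to be \emph{increasing in $j$}, i.e.\ ordered like the velocities. They are not: a slow breather with large $\alpha_k,\beta_k$ (e.g.\ $\alpha_k^2=10$, $\beta_k^2=31$, so $v=1$, $b=42$, $d=1681$) followed by a faster soliton ($c=2$, so $v=2$, $b=4$, $d=4$) violates the ordering, and then the boundary flux terms have the wrong sign and $\tfrac{d}{dt}\mathcal F$ is not one-sidedly controlled. (Separately, a bound $\tfrac{d}{dt}\mathcal F\le Ce^{-\theta_0D}$ would integrate to a linearly growing error; one needs the flux to decay in $t$ as well, which it does because the separations grow linearly.) The paper avoids this entirely by never summing: it runs a \emph{decreasing induction} on $j$ (Proposition \ref{2prop:ind}), at each step using the single functional $\mathcal H_j=F_j+2(b_j^2-a_j^2)E_j+(a_j^2+b_j^2)^2M_j$ localized on the right half-line $\Phi_j$ containing $P_j,\dots,P_J$ with the coefficients of $P_j$ only; its almost-decay (Lemma \ref{2lem:mono}) needs only $b_j^2-a_j^2>0$, and the mismatched contributions of the faster objects $P_i$, $i>j$, are absorbed by the already-improved bounds of the induction hypothesis rather than by any coercivity or sign condition for them.

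The second gap is your treatment of the ``bad directions.'' For a breather, $\int\varepsilon\widetilde{P_j}$ is \emph{not} an orthogonality condition (modulation kills only $\partial_{x_1}\widetilde{B_k}$ and $\partial_{x_2}\widetilde{B_k}$), and it is not controlled by ``subtracting off the locally conserved mass of each piece'': the window-localized mass is not conserved, and monotonicity gives only a one-sided bound. The coercivity estimate \eqref{2coercivity} leaves an uncontrolled term $C\bigl(\int\varepsilon\widetilde{P_j}\bigr)^2$, and closing the bootstrap requires a genuinely two-sided estimate \eqref{2first-ccl-br}: one sign comes from the almost-decay of $M_j$, the other from combining the breather's fourth-order elliptic equation \eqref{2ellip-br} (which expresses $(a_j^2+b_j^2)^2\int\widetilde{P_j}\varepsilon$ through $e[\widetilde{P_j}]$ and $f[\widetilde{P_j}]$) with the almost-decay of $F_j+\omega_2M_j$ and $E_j+\omega_1M_j$ for $\omega_1,\omega_2$ small. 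Likewise, for a soliton the variation $c_l(t)-c_l(0)$ entering \eqref{2velo-ccl} is pinned down by a two-case argument using $M[Q]>0$ when $c_l(t)\ge c_l(0)$ and $E[Q]<0$ when $c_l(t)\le c_l(0)$, and the constant terms of the Taylor expansion only cancel thanks to the identity $\tfrac52F[Q]+3E[Q]+\tfrac12M[Q]=0$. None of this mechanism appears in your plan, and without it the inequality $\lambda\|\varepsilon\|_{H^2}^2\le\mathcal F(t)-\mathcal F_\infty+\dots$ you write in the concluding step does not follow.
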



%
%

\begin{rem}
Let us stress on the fact that the constant $A_0$ \emph{do not depend} on the translation parameters of the considered objects.
\end{rem}


We also deduce a consequence  from Theorem \ref{2:THM:MAIN} for multi-breathers.

Recall that multi-breathers were defined and contructed in \cite{key-49}, and
their uniqueness was proved there when $v_2>0$. 
They can be also obtained from the formula from \cite{key-37} obtained by inverse scattering method by Wadati.
Let $p$ be a multi-breather associated to $P$ (in the setting $v_2>0$, we know that $p$ is also unique). To emphasize the dependence of $p$ with respect to the parameters,
we will write 
\begin{equation}
p(t,x)=:p(t,x;\alpha_{k},\beta_{k},x_{1,k}^{0},x_{2,k}^{0},\kappa_{l},c_{l}^{0},x_{0,l}^{0}).\label{2eq:mb}
\end{equation}

The same way, to emphasize the dependence of $P$ with respect to
the parameters, we will write
\begin{equation}
P(t,x)=:P(t,x;\alpha_{k},\beta_{k},x_{1,k}^{0},x_{2,k}^{0},\kappa_{l},c_{l}^{0},x_{0,l}^{0}).\label{2eq:sosb}
\end{equation}


Theorem \ref{2:THM:MAIN} can be recast as orbital
stability of a \emph{multi-breather}:

\begin{thm}
\label{2thm:mb}Let $\alpha_{k},\beta_{k},x_{1,k}^{0},x_{2,k}^{0},\kappa_{l},c_{l}^{0},x_{0,l}^{0}$
and $p$ the multi-breather associated to these parameters with notations
as in (\ref{2eq:mb}) given by \cite[Theorem 1.2]{key-49}. We assume
that  \eqref{2diff} holds and $v_{2} > 0$. There exists
$\eta_0>0$ small enough, $C_0>0$ large enough such that the following is true for $0<\eta<\eta_0$. 

Let $u(t)$ be a solution
of \eqref{2mKdV}, such that 
\begin{align} \left\Vert u(0)-p(0;\alpha_{k},\beta_{k},x_{1,k}^{0},x_{2,k}^{0},c_{l}^{0},x_{0,l}^{0})\right\Vert _{H^{2}}\leq \eta. \end{align}
Then there exist $C^1$ functions $x_{0,l}(t),x_{1,k}(t),x_{2,k}(t)$ defined for
any $t\geq 0$ such that 
\begin{equation}
\label{2stab_mb}
\forall t\geq0,\quad\left\Vert u(t)-p\left(t;\alpha_{k},\beta_{k},x_{1,k}(t),x_{2,k}(t),c_{l}^{0},x_{0,l}(t)\right)\right\Vert _{H^{2}}\leq C_0 \eta,
\end{equation}
with \eqref{2deriv0} that is satisfied.
\end{thm}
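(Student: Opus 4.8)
\textbf{Proof plan for Theorem \ref{2thm:mb}.}

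The strategy is to deduce Theorem \ref{2thm:mb} from Theorem \ref{2:THM:MAIN} by exploiting the fact that the multi-breather $p$ converges to the sum $P$ of its constituent objects as $t \to +\infty$. First, I would invoke the existence result from \cite[Theorem 1.2]{key-49}: for any $\theta_1 > 0$ there is a time $T_1 = T_1(\theta_1)$ and a choice of shifted parameters such that $\|p(T_1) - \widetilde P(T_1)\|_{H^2} \leq e^{-\theta_1 T_1}$, where $\widetilde P$ is the sum of solitons and breathers with the \emph{same} shape and frequency parameters $c_l^0, \alpha_k, \beta_k$ as $p$ but with translation parameters chosen so that, at time $T_1$, the objects $\widetilde P_j$ are ordered and separated by a gap $\geq 2D_0$ (this uses the distinctness of velocities \eqref{2diff}: for $T_1$ large the natural centers $x_j(T_1)$ of $\widetilde P$ are automatically $D$-separated with $D$ as large as we like). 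Crucially, the constants $A_0, \theta_0, D_0, a_0$ furnished by Theorem \ref{2:THM:MAIN} depend only on $c_l^0, \alpha_k, \beta_k$ and not on translation parameters, so they are fixed once and for all by the data of $p$.

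Next, given $u$ with $\|u(0) - p(0)\|_{H^2} \leq \eta$, I would run the flow forward from $0$ to $T_1$. Here is the point where a price must be paid: on the fixed time interval $[0,T_1]$, continuous dependence of the \eqref{2mKdV} flow in $H^2$ on the initial data gives $\|u(T_1) - p(T_1)\|_{H^2} \leq G(T_1)\,\eta$ for some increasing function $G$ coming from Cauchy theory. Combining with the previous step, $\|u(T_1) - \widetilde P(T_1)\|_{H^2} \leq G(T_1)\eta + e^{-\theta_1 T_1}$. Now choose $T_1$ large enough that $e^{-\theta_1 T_1} \leq \tfrac{1}{2} a_0$ and that the separation of $\widetilde P$ at time $T_1$ exceeds $2D_0$; then shrink $\eta_0$ so that $G(T_1)\eta_0 \leq \tfrac12 a_0$. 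With these choices, $u(T_1)$ satisfies the hypotheses \eqref{2assump:MAIN} of Theorem \ref{2:THM:MAIN} (applied with initial time $T_1$ rather than $0$, which is legitimate by time-translation invariance of \eqref{2mKdV}), with $a := G(T_1)\eta + e^{-\theta_1 T_1}$ and $D$ the separation of $\widetilde P$. Theorem \ref{2:THM:MAIN} then yields, for all $t \geq T_1$, modulation parameters $x_{0,l}(t), x_{1,k}(t), x_{2,k}(t)$ with
\begin{equation}
\Bigl\| u(t) - \sum_l R_{c_l^0,\kappa_l}(t,\cdot;x_{0,l}(t)) - \sum_k B_{\alpha_k,\beta_k}(t,\cdot;x_{1,k}(t),x_{2,k}(t)) \Bigr\|_{H^2} \leq A_0\bigl(G(T_1)\eta + e^{-\theta_1 T_1} + e^{-\theta_0 D}\bigr),
\end{equation}
together with the derivative bound \eqref{2deriv0}. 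Finally I would convert this back to a bound involving $p$ itself: since $p(t; \alpha_k,\beta_k,x_{1,k}(t),x_{2,k}(t),c_l^0,x_{0,l}(t))$ differs from $\sum_l R + \sum_k B$ (with those same shifted parameters) by $O(e^{-\theta t})$ uniformly once the objects are $D$-separated — again using the convergence of multi-breathers to their linear sum and the fact that the $D$-separation is maintained for $t \geq T_1$ — we get \eqref{2stab_mb} on $[T_1, \infty)$ with a possibly enlarged constant $C_0$ absorbing $A_0 G(T_1)$ and all the exponentially small terms (and, after another shrink of $\eta_0$, the term $e^{-\theta_1 T_1}$ can be made $\leq \eta$). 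It remains to cover $[0,T_1]$: there, by continuous dependence again, $\|u(t) - p(t)\|_{H^2} \leq G(T_1)\eta$, so choosing the (constant, hence $C^1$) modulation $x_{0,l}(t) \equiv x_{0,l}^0$, $x_{i,k}(t) \equiv x_{i,k}^0$ on $[0,T_1]$ and smoothly interpolating to the values at $T_1$ (at the cost of a bounded derivative, absorbed into the right-hand side of \eqref{2deriv0}) completes the proof.

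The main obstacle is the tension between the two kinds of estimates: Theorem \ref{2:THM:MAIN} provides constants uniform in translation parameters but only becomes usable once the objects are $D_0$-separated, whereas at $t=0$ the multi-breather $p$ has its objects possibly strongly interacting, so one cannot apply the stability theorem directly. Flowing forward to a large time $T_1$ resolves the separation issue but introduces the growth factor $G(T_1)$ from Cauchy theory, which is harmless only because $T_1$ is fixed (depending on $\alpha_k,\beta_k,c_l^0$ but not on $\eta$) — hence the final constant $C_0$ is allowed to depend on translation parameters, as the statement explicitly permits, and $\eta_0$ must be taken small \emph{after} $T_1$ is fixed. A secondary technical point is ensuring the modulation parameters can be glued across $t = T_1$ as genuine $C^1$ functions with a controlled derivative; this is routine since on $[0,T_1]$ one is free to take them constant and the interpolation is over a fixed-length interval.
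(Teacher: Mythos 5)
Your proposal is correct and follows essentially the same route as the paper: flow forward to a fixed large time $T$ where the multi-breather is exponentially close to the decoupled sum $P$ and the objects are $2D_0$-separated, pay a $T$-dependent constant from $H^2$ continuous dependence of the flow on $[0,T]$, apply Theorem \ref{2:THM:MAIN} from time $T$ onward, and convert back to the modulated multi-breather using the convergence estimate of \cite[Theorem 1.2]{key-49} once more. The only (minor) point where you are more explicit than the paper is the $C^1$ gluing of the modulation parameters across $t=T$, which you handle by constant extension and interpolation.
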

\begin{rem}
Let us stress on the fact the $C_0$ \emph{do depend} on the translation parameters of the considered solitons and breathers, i.e. on $x_{0,l}^0$, $x_{1,k}^0$ and $x_{2,k}^0$. More precisely, it depends on the time we need to wait until the collisions between the considered solitons and breathers are over. This is a fundamental difference between Theorem \ref{2thm:mb} and Theorem \ref{2:THM:MAIN}. 
In fact, in Theorem \ref{2:THM:MAIN}, we avoid collisions and this is the reason why we get a more uniform result. It is reasonnable to expect that Theorem \ref{2thm:mb} can be improved so that $C_0$ do not depend on translation parameters. 
\end{rem}

%
%


In this paper, we adapt the arguments given by Martel, Merle and Tsai
\cite{key-8} to the context of breathers. To do so, it is needed
to understand the variational structure of breathers, in the same
manner as Weinstein did in \cite{key-6} for (NLS) and \eqref{2mKdV} solitons.
Such results have been obtained by Alejo and Muñoz in \cite{key-1}.
When a soliton is a critical point of a Lyapunov functional at the
$H^{1}$ level, whose Hessian is coercive up to two orthogonal conditions,
a breather is a critical point of a Lyapunov functional at the $H^{2}$
level, whose Hessian is coercive up to three orthogonal conditions.
One important issue that we address is to understand the variational
structure of a soliton at the $H^{2}$ level. We do this by modifying the Lyapunov
functional from \cite{key-1}, and we will also adapt it for a sum
of solitons and breathers. We need to make assumptions on the velocities
of our breathers (recall that the velocity of a soliton is always
positive), because several arguments are based on monotonicity properties, which hold only on the right.

\subsection{Organisation of the proof}

The proof of Theorem \ref{2:THM:MAIN} is based on two results:
a modulation lemma and a bootstrap proposition. We give a detailed outline of both results in Section \ref{2sec:ind}, and give in Section \ref{2sec:orb} the proof of the heart of the argument (Proposition \ref{2prop:ind}), where we complete the bootstrap via a (finite) induction argument on the an improved bound localized on the last $j$ objects.

In Section \ref{2sec:cons}, we prove Theorem
\ref{2thm:mb} as a quick consequence of Theorem \ref{2:THM:MAIN}.

\subsection{Acknowledgments}

The author would like to thank his supervisor Raphaël Côte for suggesting
the idea of the work, for fruitful discussions and his useful advice.

The author would also like to thank Guillaume Ferriere for his useful remarks and suggestions.

\section{Reduction of the proof to an induction}
\label{2sec:ind}

For the proof, we assume the assumption of the Theorem \ref{2:THM:MAIN}
true (i.e. we assume (\ref{2assump:MAIN}) true) for a solution $u(t)$ and a choice of translation parameters, and the goal is to
find the suitable constants $A,\theta,D_{0},a_{0}$ (that do not depend on $u$ nor on translation parameters) so that the Theorem \ref{2:THM:MAIN}
holds.

\subsection{Some useful notations}

We set some useful constants for this paper. We define the worst exponential
decay rate: 
\begin{equation}
\beta:=\min\{\beta_{k},1\leq k\leq K\}\cup\{\sqrt{c_{l}},1\leq l\leq L\},
\end{equation}
and the worst distance between two consecutive velocities: 
\begin{equation}
\tau:=\min\{v_{j+1}-v_{j},1\leq j\leq J-1\}.
\end{equation}


For any $j=2,...,J$, we have that
\begin{equation}
\label{2go}
\forall t\in I,\quad x_{j}(t)-x_{j-1}(t)\geq x_{j}(0)-x_{j-1}(0)+\tau t.
\end{equation}

We introduce general parameters, for $j=1,...,J$, $(a_{j},b_{j})$.
If $P_{j}=B_{k}$ is a breather, we set $(a_{j},b_{j}):=(\alpha_{k},\beta_{k})$.
If $P_{j}=R_{l}$ is a soliton, we set $(a_{j},b_{j}):=(0,\sqrt{c_{l}^{0}})$.

\subsection{Modulation lemma}

We will first state a standard modulation lemma, which can be proved
similarly as the modulation lemma in \cite{key-49}. We need it because
we will construct translations of our objects so that they are near
to $u$ and some orthogonality conditions are satisfied. These orthogonality
conditions will allow us to use coercivity of some quadratic forms
in the following of the proof. 
\begin{lem}
\label{2lem:mod}Let $A'>0$, $\theta>0$, $t'>0$ and 
$y_{1,k}(t)$, $y_{2,k}(t)$, $y_{3,l}(t)$, $y_{0,l}(t)$ defined for $t\in [0,t']$
(with $y_{3,l}(t)>0$) such that $\forall t\in [0,t'],\quad\left|y_{3,l}(t)-c_{l}^{0}\right|\leq\min\{\frac{\tau}{8}\}\cup\{\frac{c_{p}^{0}}{4},1\leq p\leq L\}$.
If $D_{0}$ is large enough and $a_{0}$ is small enough (dependently
on $A'$), there exists a constant $C_{2}>1$ such that the following
holds. Let $u(t)$ be a solution of \eqref{2mKdV} such that for any $t\in [0,t']$,
\begin{equation}
\left\Vert u(t)-\sum_{l=1}^{L}\kappa_lQ_{y_{3,l}(t)}(\cdot+y_{0,l}(t)-c_l^0t)-\sum_{k=1}^{K}B_{\alpha_{k},\beta_{k}}(t,\cdot;y_{1,k}(t),y_{2,k}(t))\right\Vert _{H^{2}}\leq A'\left(a+e^{-\theta D}\right),
\end{equation}
\begin{equation}
\left\Vert u(0)-\sum_{l=1}^{L}\kappa_lQ_{y_{3,l}(0)}(\cdot+y_{0,l}(0))-\sum_{k=1}^{K}B_{\alpha_{k},\beta_{k}}(0,\cdot;y_{1,k}(0),y_{2,k}(0))\right\Vert _{H^{2}}\leq a,
\end{equation}
and if we set $y_{j}(t):=-y_{0,l}(t)+v_l^st$ if $P_{j}=R_{l}$,
and $y_{j}(t):=-y_{2,k}(t)+v_{k}^{b}t$ if $P_{j}=B_{k}$, we have, for any $2\leq j\leq J$,
\begin{equation}
\forall t\in [0,t'],\quad y_{j}(t)-y_{j-1}(t)\geq D,
\end{equation}

then, there exists $\mathcal{C}^{1}$ functions $z_{1,k}(t),z_{2,k}(t),z_{3,l}(t),z_{0,l}(t)$
defined for $t\in [0,t']$ (with $z_{3,l}(t)>0$), such that if we
set 
\begin{equation}
\varepsilon(t):=u(t)-\widetilde{P}(t),
\end{equation}
where 
\begin{equation}
\widetilde{R_{l}}(t,x):=\kappa_lQ_{z_{3,l}(t)}(\cdot+z_{0,l}(t)-c_l^0t)\quad for\,\,1\leq l\leq L,
\end{equation}
\begin{equation}
\widetilde{B_{k}}(t,x):=B_{\alpha_{k},\beta_{k}}(t,x;z_{1,k}(t),z_{2,k}(t))\quad for\,\,1\leq k\leq K,
\end{equation}
\begin{equation}
\widetilde{P_{j}}:=\widetilde{R_{l}}\quad if\,\,P_{j}=R_{l},\quad\widetilde{P_{j}}:=\widetilde{B_{k}}\quad if\,\,P_{j}=B_{k},
\end{equation}
\begin{equation}
\widetilde{R}:=\sum_{l=1}^{L}\widetilde{R_{l}},\quad\widetilde{B}:=\sum_{k=1}^{K}\widetilde{B_{k}},\quad\widetilde{P}:=\widetilde{R}+\widetilde{B}=\sum_{j=1}^{J}\widetilde{P_{j}},
\end{equation}

then, for $t\in [0,t']$, for $l=1,...,L$, for $k=1,...,K$, 
\begin{equation}
\int\widetilde{R_{l}}(t)\varepsilon(t)=\int\partial_{x}\widetilde{R_{l}}(t)\varepsilon(t)=\int\partial_{x_{1}}\widetilde{B_{k}}(t)\varepsilon(t)=\int\partial_{x_{2}}\widetilde{B_{k}}(t)\varepsilon(t)=0.\label{2orth}
\end{equation}

Moreover, for $t\in [0,t']$, we have 
\begin{equation}\begin{aligned}
\Vert\varepsilon(t)\Vert_{H^{2}}+\vert z_{1,k}(t)-y_{1,k}(t)\vert+\vert z_{2,k}(t)-y_{2,k}(t)\vert\\
+\vert z_{3,l}(t)-y_{3,l}(t)\vert+\vert z_{0,l}(t)-y_{0,l}(t)\vert &\leq C_{2}A'\left(a+e^{-\theta D}\right),
\end{aligned}\end{equation}
and 
\begin{equation}
\Vert\varepsilon(0)\Vert_{H^{2}}+\vert z_{1,k}(0)-y_{1,k}(0)\vert+\vert z_{2,k}(0)-y_{2,k}(0)\vert+\vert z_{3,l}(0)-y_{3,l}(0)\vert+\vert z_{0,l}(0)-y_{0,l}(0)\vert\leq C_{2}a,
\end{equation}
and for any $t\in [0,t']$, $\left(z_{1,k}(t),z_{2,k}(t),z_{3,l}(t),z_{0,l}(t)\right)\in\mathbb{R}^{2K+2L}$
is unique such that (\ref{2orth}) is satisfied and $\left(z_{1,k}(t),z_{2,k}(t),z_{3,l}(t),z_{0,l}(t)\right)$
is in a suitable neighbourhood of $\left(y_{1,k}(t),y_{2,k}(t),y_{3,l}(t),y_{0,l}(t)\right)$
that depends only on $A'\left(a+e^{-\theta D}\right)$.

We set, for $t\in [0,t']$, $z_j(t):=z_{l}^{s}(t):=-z_{0,l}(t)+v_l^st$
 if $P_{j}=R_{l}$, and $z_j(t):=z_{k}^{b}(t):=-z_{2,k}(t)+v_{k}^{b}t$
 if $P_{j}=B_{k}$.

For $D_{0}$ large enough and $a_{0}$ small enough, if we assume
that 
\begin{equation}
\forall t\in [0,t'],\quad z_{j}(t)-z_{j-1}(t)\geq D,\label{2forz}
\end{equation}
(note that (\ref{2forz}) is a consequence of $\forall t\in [0,t'],\quad y_{j}(t)-y_{j-1}(t)\geq2D$ if $a_0$ is small enough and $D_0$ is large enough),
then for any $t\in [0,t']$, we have that

for $k=1,...,K$, 
\begin{equation}
\vert z_{1,k}'(t)\vert+\vert z_{2,k}'(t)\vert\leq C_{2}\left(\int e^{-\frac{\beta}{2}\left|x-z_{k}^{b}(t)\right|}\varepsilon^{2}\right)^{1/2}+C_{2}e^{-\frac{\beta D}{8}},\label{2mod1}
\end{equation}

for $l=1,...,L$, 
\begin{equation}
\vert z_{3,l}'(t)\vert+\vert z_{0,l}'(t)\vert\leq C_{2}\left(\int e^{-\frac{\beta}{2}\left|x-z_{l}^{s}(t)\right|}\varepsilon^{2}\right)^{1/2}+C_{2}e^{-\frac{\beta D}{8}}.\label{2mod2}
\end{equation}
\end{lem}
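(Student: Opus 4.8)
The plan is to prove Lemma~\ref{2lem:mod} by a standard quantitative version of the implicit function theorem, following closely the modulation argument already carried out in \cite{key-49}, and then to add the time-derivative estimates \eqref{2mod1}--\eqref{2mod2} by differentiating the orthogonality conditions in $t$ and using the equation \eqref{2mKdV}. First I would set up, for a fixed profile $\widetilde P$ depending on modulation parameters $\sigma=(z_{1,k},z_{2,k},z_{3,l},z_{0,l})$, the map
\[
\Phi(u,\sigma):=\Bigl(\textstyle\int\widetilde R_l\,(u-\widetilde P),\ \int\partial_x\widetilde R_l\,(u-\widetilde P),\ \int\partial_{x_1}\widetilde B_k\,(u-\widetilde P),\ \int\partial_{x_2}\widetilde B_k\,(u-\widetilde P)\Bigr)_{l,k},
\]
a $\mathcal C^1$ map from $H^2\times\mathbb R^{2K+2L}$ to $\mathbb R^{2K+2L}$. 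At $u=\widetilde P$ (i.e.\ $\sigma$ matching the given $y$-parameters) the Jacobian $\partial_\sigma\Phi$ is, up to exponentially small off-diagonal terms coming from the spatial separation hypothesis $y_j-y_{j-1}\ge D$, block-diagonal, each block being an invertible Gram-type matrix of the profile and its relevant derivatives (for solitons: $\langle Q, \partial_c Q\rangle\ne 0$ and $\langle\partial_x Q,\partial_x\partial_{x_0}Q\rangle\ne 0$ after separating odd/even parity; for breathers the $3\times3$ / $2\times2$ block computed in \cite{key-1,key-49}). Thus for $D_0$ large the full Jacobian is uniformly invertible, and the quantitative IFT produces unique $\mathcal C^1$ functions $z(t)$ in a ball of radius $\sim A'(a+e^{-\theta D})$ around $y(t)$, with the stated Lipschitz control; evaluating at $t=0$ uses the sharper bound $a$ in the hypothesis. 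The decoupling remark — that \eqref{2forz} follows from $y_j-y_{j-1}\ge 2D$ — is immediate from $|z_j-y_j|\lesssim A'(a+e^{-\theta D})\le D/2$ once $a_0,D_0^{-1}$ are small.

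For the derivative bounds I would differentiate each identity in \eqref{2orth} with respect to $t$. Writing $\varepsilon=u-\widetilde P$ and using $u_t=-(u_{xx}+u^3)_x$ together with the fact that each $\widetilde{R_l},\widetilde{B_k}$ solves \eqref{2mKdV} with its own (modulated) parameters, the time derivative $\partial_t\widetilde P$ splits into: (i) the exact-flow part, which is handled by integration by parts against the (smooth, exponentially localized) test functions; (ii) the modulation part $\sum \dot z_{i}\,\partial_{z_i}\widetilde P$, which is linear in $\dot z=(\dot z_{1,k},\dot z_{2,k},\dot z_{3,l},\dot z_{0,l})$ with coefficient matrix again a near-block-diagonal invertible Gram matrix; and (iii) cross terms $\widetilde{P_j}$ against the test function localized on $\widetilde{P_{j'}}$ for $j\ne j'$, which are $O(e^{-\beta D/8})$ by the separation \eqref{2forz} and the exponential decay of solitons and breathers (with a little care since the "center" separation $z_j-z_{j-1}\ge D$ must be converted to pointwise decay of the product — this is where the constant $\tfrac18$ and the truncation of $\beta$ enter). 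Solving the resulting linear system for $\dot z$ yields
\[
|\dot z|\ \le\ C\,\Bigl|\textstyle\int (\text{localized smooth weight})\,\varepsilon\Bigr|\ +\ Ce^{-\beta D/8},
\]
and bounding the integral by Cauchy--Schwarz against $e^{-\frac\beta2|x-z^{b}_k|}$ (resp.\ $e^{-\frac\beta2|x-z^{s}_l|}$), using that $\partial_{x_1}\widetilde{B_k}$ etc.\ decay like $e^{-\beta|x-z^b_k|}$, gives exactly \eqref{2mod1}--\eqref{2mod2}.

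The main obstacle is the bookkeeping of the exponentially small cross-interaction terms: one must check that the separation hypothesis stated in terms of the \emph{centers} $z_j(t)-z_{j-1}(t)\ge D$, combined with the (slightly different) decay rates of the various objects and of their $x_1$-, $x_2$-, $c$-derivatives, really does control every off-diagonal entry of both the modulation Gram matrix and the source terms by $e^{-cD}$ with a uniform $c$ (here $\beta/8$), \emph{uniformly in the translation parameters}. This requires a careful elementary lemma on $\int e^{-b_1|x-p_1|}e^{-b_2|x-p_2|}\,dx\lesssim e^{-\min(b_1,b_2)|p_1-p_2|/\text{const}}$ and on the analogous estimates when one factor is replaced by a polynomially-weighted exponential (arising from $\partial_c Q_c$). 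Everything else — the IFT, the parity splitting for solitons, the non-degeneracy of the breather block — is either classical or already done in \cite{key-1,key-49}, so I would cite those and only spell out the parts specific to having solitons and breathers simultaneously at the $H^2$ level.
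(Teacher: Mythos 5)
Your proposal is correct and follows essentially the same route as the paper, which does not spell out the argument but defers to the standard modulation lemma of \cite[Lemma 2.8]{key-49} (and \cite{key-8,key-1}): a quantitative implicit function theorem on the orthogonality functionals with a near-block-diagonal invertible Gram matrix, followed by differentiation of the orthogonality relations in time to obtain \eqref{2mod1}--\eqref{2mod2}. The points you flag as needing care (parity splitting for the soliton block, nondegeneracy of the breather block, uniform exponential control of the cross-interaction terms) are exactly the ones handled in those references, so nothing essential is missing.
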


\begin{rem}
We will also use generalized notations for $y_{1,k}(t)$, $y_{2,k}(t)$ and other object-specific notations, in the lemma above.

For $j=1,...,J$, either if $P_j=R_l$ is a soliton, we denote
\begin{align}
y_{1,j}^*(t):=y_{3,l}(t),\quad z_{1,j}^*(t):=z_{3,l}(t),\quad y_{2,j}^*(t):=y_{0,l}(t),\quad z_{2,j}^*(t):=z_{0,l}(t),
\end{align}
or if $P_j=B_k$ is a breather, we denote
\begin{align}
y_{1,j}^*(t):=y_{1,k}(t),\quad z_{1,j}^*(t):=z_{1,k}(t),\quad y_{2,j}^*(t):=y_{2,k}(t),\quad z_{2,j}^*(t):=z_{2,k}(t).
\end{align}
\end{rem}

\begin{proof}[Proof]
See \cite[Lemma 2.8]{key-49}, for the proof of a similar result. We also refer to \cite{key-8, key-1,CH}.
\end{proof}


\subsection{Bootstrap}

Given Lemma \ref{2lem:mod}, we reduce the proof of Theorem \ref{2:THM:MAIN}
to the following bootstrap proposition (we will use the notations given in Section \ref{sec:main}; in particular the position $x_j(t)$ of $P_j(t)$ is defined there): 
\begin{prop}
\label{2prop:boot}There exists $A\geq C_{2}$ and $\theta>0$ such
that, if $D_{0}$ is large enough and $a_{0}$ is small enough such
that 
\begin{equation}
C_{2}a\leq A\left(a+e^{-\theta D}\right)\leq\min\{\frac{\tau}{8}\}\cup\{\frac{c_{p}^{0}}{4},1\leq p\leq L\},
\end{equation}
for $t^*>0$, we assume that there exist $\mathcal{C}^1$ functions $x_{1,k}(t),x_{2,k}(t),c_{l}(t),x_{0,l}(t)\in\mathbb{R}$
(with $c_{l}(t)>0$) defined for $t\in [0,t^*]$ such that, if we
denote 
\begin{equation}
\forall t\in [0,t^*],\quad\varepsilon(t):=u(t)-\sum_{l=1}^{L}\widetilde{R_{l}}(t)-\sum_{k=1}^{K}\widetilde{B_{k}}(t),
\end{equation}
where $u$ is a solution of \eqref{2mKdV}, 
\begin{equation}
\forall t\in [0,t^*],\quad\widetilde{R_{l}}(t):=\kappa_lQ_{c_l(t)}(\cdot+x_{0,l}(t)-c_l^0t)\quad\text{for }\,1\leq l\leq L,
\end{equation}
and
\begin{equation}
\forall t\in [0,t^*],\quad\widetilde{B_{k}}(t):=B_{\alpha_{k},\beta_{k}}(t,\cdot;x_{1,k}(t),x_{2,k}(t)),\quad\text{for }\,1\leq k\leq K,
\end{equation}
$\widetilde{P_{j}}:=\widetilde{R_{l}}$ if $P_{j}=R_{l}$ and $\widetilde{P_{j}}:=\widetilde{B_{k}}$
if $P_{j}=B_{k}$, and $\widetilde{x_{l}^{s}}(t):=-x_{0,l}(t)+v_l^st$,
$\widetilde{x_{k}^{b}}(t):=-x_{2,k}(t)+v_{k}^{b}t$, $\widetilde{x_{j}}(t):=\widetilde{x_{l}^{s}}(t)$
if $P_{j}=R_{l}$, and $\widetilde{x_{j}}(t):=\widetilde{x_{k}^{b}}(t)$
if $P_{j}=B_{k}$,

and if we assume that 
\begin{equation}
\forall 1\leq j\leq J-1,\quad x_{j+1}(0)-x_{j}(0)\geq 2D,\label{2cond0}
\end{equation}
\begin{equation}
\forall t\in [0,t^*],\quad\Vert\varepsilon(t)\Vert_{H^{2}}\leq A\left(a+e^{-\theta D}\right),\quad\Vert\varepsilon(0)\Vert_{H^{2}}\leq C_{2}a,\label{2cond1}
\end{equation}
\begin{equation}
\forall t\in [0,t^*],\quad\vert c_{l}(t)-c_{l}^{0}\vert\leq A\left(a+e^{-\theta D}\right),\quad\vert c_{l}(0)-c_{l}^{0}\vert\leq C_{2}a,\label{2cond2}
\end{equation}
\begin{equation}
\sum_{l=1}^L\vert x_{0,l}(0)+x_{0,l}^0\vert+\sum_{k=1}^K\left(\vert x_{1,k}(0)-x_{1,k}^0\vert+\vert x_{2,k}(0)-x_{2,k}^0\vert\right)\leq Ca,\label{2cond}
\end{equation}
\begin{equation}
\forall t\in [0,t^*],\quad\sum_{l=1}^L\left(\vert c_l'(t)\vert+\vert x_{0,l}'(t)\vert\right)+\sum_{k=1}^K\left(\vert x_{1,k}'(t)\vert +\vert x_{2,k}'(t)\vert\right)\leq CA(a+e^{-\theta D}),\label{2cond3}
\end{equation}
where $C>0$ is a large enough constant,
and
\begin{equation}
\forall t\in [0,t^*],\quad\int\widetilde{B_{k}}_{1}(t)\varepsilon(t)=\int\widetilde{B_{k}}_{2}(t)\varepsilon(t)=\int\widetilde{R_{l}}(t)\varepsilon(t)=\int\widetilde{R_{l}}_{x}(t)\varepsilon(t)=0,\label{2cond4}
\end{equation}

then 
\begin{equation}
\forall t\in [0,t^*],\quad\Vert\varepsilon(t)\Vert_{H^{2}}\leq\frac{A}{2}\left(a+e^{-\theta D}\right),\label{2result1}
\end{equation}
\begin{equation}
\forall t\in [0,t^*],\quad\vert c_{l}(t)-c_{l}^{0}\vert\leq\frac{A}{2}\left(a+e^{-\theta D}\right).\label{2result2}
\end{equation}

\end{prop}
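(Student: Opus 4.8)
The proof of the bootstrap proposition follows the now-classical energy method for multi-solitons, adapted to the $H^2$ setting forced on us by the breathers. The central object is a localized Lyapunov functional. For each $j$ from $J$ down to $1$, I would build weight functions $\psi_j(t,x)$ that are (smoothed) characteristic functions of the region to the right of the midpoint $m_j(t) := \tfrac12(x_{j-1}(t)+x_j(t))$ between consecutive objects, and use the monotonicity of the mass/energy-type quantities on the right — this is exactly where the hypothesis $v_2>0$ enters, since we need every midpoint to the right of the leftmost gap to be moving to the right so that the almost-monotonicity of these localized quantities holds with a good sign. Then, writing $F_j$ for the natural $H^2$-level conserved functional adapted to the object $P_j$ (the functional from \cite{key-1} for a breather, and its $H^2$-modification described in the introduction for a soliton), I would consider the combined quantity
\begin{equation*}
\mathcal{F}(t) := \sum_{j=1}^{J}\Bigl(F[u\psi_j\text{-localized}] \;-\; \text{(lower-order conserved corrections tuned to }P_j)\Bigr),
\end{equation*}
which is designed so that $\mathcal{F}'(t) \ge -C e^{-\theta_0' D}$ (almost monotonicity), and so that at each time $\mathcal{F}(t)$ controls $\|\varepsilon(t)\|_{H^2}^2$ from below up to the mismatch in the conserved quantities and up to the scaling/translation modulation parameters.

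**Key steps, in order.** First, I would record the equation satisfied by $\varepsilon$: substituting $u = \widetilde P + \varepsilon$ into \eqref{2mKdV} and using that each $\widetilde{P_j}$ solves the corresponding elliptic equation up to the time-dependence carried by the modulation parameters, one gets $\varepsilon_t = (\mathcal{L}\varepsilon)_x + (\text{modulation terms}) + (\text{interaction terms between distinct }\widetilde{P_j}) + (\text{nonlinear-in-}\varepsilon\text{ terms})$. The interaction terms are $O(e^{-\theta D})$ pointwise in the overlap regions thanks to \eqref{2go} and the exponential localization of solitons and breathers; the modulation terms are controlled by \eqref{2mod1}–\eqref{2mod2} from Lemma \ref{2lem:mod}, i.e. by $\|\varepsilon\|$ weighted near each object plus $e^{-\beta D/8}$. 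Second, I would prove the monotonicity estimates for the localized mass, energy and $F$-type quantities: differentiate in time, integrate by parts, and check that the boundary terms produced by $\psi_j'$ come with the right sign because $\psi_j$ translates to the right faster than the dispersive/group velocity on its support — this yields $\frac{d}{dt}(\text{localized quantity}) \ge -Ce^{-\theta D}$. Third, the coercivity step: using the orthogonality conditions \eqref{2cond4} and the known spectral properties (the Hessian of $F$ at a breather is coercive modulo the three-dimensional kernel/negative directions, and the modified soliton functional is coercive modulo two directions at the $H^2$ level), I would establish
\begin{equation*}
\mathcal{F}(t) - \mathcal{F}(0) \;\ge\; \lambda \|\varepsilon(t)\|_{H^2}^2 \;-\; C\,|c_l(t)-c_l^0|^2 \;-\; C\|\varepsilon(0)\|_{H^2}^2 \;-\; C e^{-\theta D}
\end{equation*}
for some $\lambda>0$. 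Combining the almost-monotonicity $\mathcal{F}(t)\le \mathcal{F}(0) + Ce^{-\theta D}$ with this lower bound, and handling the scaling parameter shift $|c_l(t)-c_l^0|$ separately (it is governed by a conserved-mass identity for the localized piece, giving $|c_l(t)-c_l^0| \lesssim \|\varepsilon\|_{H^2}^2 + \|\varepsilon(0)\|_{H^2} + e^{-\theta D}$), one gets $\|\varepsilon(t)\|_{H^2}^2 \lesssim \|\varepsilon(0)\|_{H^2}^2 + e^{-\theta D} \lesssim C_2^2 a^2 + e^{-\theta D}$. Choosing $A$ large enough (compared to $C_2$ and the implied constants, but fixed independently of $a$, $D$) and then $D_0$ large, $a_0$ small, this is $\le \tfrac{A^2}{4}(a+e^{-\theta D})^2$, which is \eqref{2result1}; \eqref{2result2} follows from the scaling identity just mentioned.

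**Main obstacle.** The hard part is the coercivity of the combined $H^2$ Lyapunov functional — in particular, making the soliton pieces fit at the same regularity level as the breather pieces. For a single breather, coercivity of the Hessian of $F$ modulo three directions is \cite{key-1}; but a soliton naturally lives at $H^1$, and its standard Weinstein functional $E + cM$ is only $H^1$-coercive. I would need to verify that the fourth-order elliptic equation for solitons noted after \eqref{2GS} (from \cite{key-49}) lets me attach to each soliton an $H^2$-level functional whose Hessian, restricted to the orthogonal complement of $\{\widetilde{R_l}, \partial_x\widetilde{R_l}\}$, is coercive in $H^2$ — this is the ``variational structure of a soliton at the $H^2$ level'' flagged in the introduction. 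A secondary difficulty is bookkeeping the cross terms: when summing the localized functionals, the regions of localization overlap only in exponentially small zones, but one must check that \emph{every} cross term between $\varepsilon$-quadratic expressions localized to different windows, and between $\widetilde{P_i}$ and $\widetilde{P_j}$ for $i\neq j$, is genuinely $O(e^{-\theta D})$ uniformly in $t\ge 0$; this is where the ordering \eqref{2eq:vord}, the uniform gap growth \eqref{2go}, and the positivity assumption \eqref{2pos} are used in concert. The induction structure alluded to (Proposition \ref{2prop:ind}, treating the last $j$ objects) is the device that keeps this bookkeeping tractable: one proves the improved bound first on the window containing only $P_J$, then enlarges the window one object at a time, at each stage only needing the almost-monotonicity and coercivity on the newly added object together with the already-established control on the objects to its right.
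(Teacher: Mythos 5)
Your proposal follows essentially the same route as the paper: cut-offs centred at the midpoints $m_j(t)$ between consecutive (reordered) objects, almost-monotonicity of the localized mass/energy/$F$-functionals on the right (which is where $v_2>0$ enters, exactly as you say), an $H^2$-level Lyapunov functional per object combining $F$, $E$ and $M$ with coefficients tuned to the breather or soliton parameters, coercivity modulo the orthogonality conditions \eqref{2cond4}, recovery of $|c_l(t)-c_l^0|$ from the localized mass and energy variations, and a finite induction peeling off one object at a time from the fastest to the slowest — this is precisely the structure of Proposition \ref{2prop:ind} and Section \ref{2sec:orb}. The obstacles you flag (the $H^2$ variational structure of the soliton via the fourth-order elliptic equation, and the bookkeeping of cross terms between windows) are indeed the ones the paper resolves, so the plan is sound and matches the paper's argument.
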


%
%

\begin{rem}
We will  use generalized notations for $x_{1,k}(t)$, $x_{2,k}(t)$, $c_l(t)$ and $x_{0,l}(t)$.

For $j=1,...,J$, either if $P_j=R_l$ is a soliton, we denote
\begin{align}
x_{1,j}^*(t):=c_l(t),\quad x_{2,j}^*(t):=x_{0,l}(t),
\end{align}
or if $P_j=B_k$ is a breather, we denote
\begin{align}
x_{1,j}^*(t):=x_{1,k}(t),\quad x_{2,j}^*:=x_{2,k}(t).
\end{align}

We denote
\begin{align}
\widetilde{P}(t):=\sum_{j=1}^J\widetilde{P_j}(t)=\sum_{k=1}^K\widetilde{B_k}(t)+\sum_{l=1}^L\widetilde{R_l}(t).
\end{align}
\end{rem}

\begin{rem}
\label{rem:farness}
We note that, for any $j=1,...,J-1$, \eqref{2cond0}
and \eqref{2cond}, assuming that $a_0$ is chosen small enough and $D_0$ is chosen large enough, imply that
\begin{align}
\widetilde{x_{j+1}}(0)-\widetilde{x_{j}}(0)\geq D.\label{2cons2}
\end{align}

From definitions of $\widetilde{x_j}(t)$, of $\tau$ and \eqref{2cond3}, assuming that $a_0$ is chosen small enough and $D_0$ is chosen large enough, we deduce that for any $t\in[0,t^*]$,
\begin{align}
\forall 1\leq j\leq J-1,\quad\widetilde{x_{j+1}}'(t)-\widetilde{x_j}'(t)\geq\frac{\tau}{2},\label{2cons3}
\end{align}
and
\begin{equation}\begin{aligned}
\forall 2\leq j\leq J,\quad\frac{v_2}{2}\leq \widetilde{x_j}'(t)\leq 2v_J.\label{2cons4}
\end{aligned}\end{equation}

From \eqref{2cons2} and \eqref{2cons3}, we may deduce that for any $t\in[0,t^*]$,
\begin{align}
\forall 1\leq j\leq J,\quad\widetilde{x_{j+1}}(t)-\widetilde{x_j}(t)\geq D+\frac{\tau}{2}t.\label{2cons5}
\end{align}

\end{rem}

The proof of this proposition will be the goal of the following. The
proof of Theorem \ref{2:THM:MAIN} then follows from a continuity argument. 
\begin{proof}[Proof of Theorem \ref{2:THM:MAIN} assuming Proposition \ref{2prop:boot}]
We take $A,\theta,D_{0},a_{0}$ that work for Proposition \ref{2prop:boot}, we
will show that they will also work for Theorem \ref{2:THM:MAIN}. We take $D\geq D_{0}$
and $0\leq a\leq a_{0}$ and we assume that (\ref{2assump:MAIN}) is
true for a solution $u$ of \eqref{2mKdV}. This implies \eqref{2cond0}.

We assume that $a_{0}$ is small enough and $D_{0}$ is large enough
such that $A\left(a+e^{-\theta D}\right)\leq\min\{\frac{\tau}{8}\}\cup\{\frac{c_{p}^{0}}{4},1\leq p\leq L\}$.

Because $\frac{A}{C_{2}}>1$, by continuity, there exists $t_1>0$ such that 
\begin{equation}
\forall t\in [0,t_1],\quad\left\Vert u(t)-P(t)\right\Vert _{H^{2}}\leq\frac{A}{C_{2}}\left(a+e^{-\theta D}\right).
\end{equation}

Of course, we have that 
\begin{equation}
\forall t\in [0,t_1],\quad x_{j}(t)-x_{j-1}(t)\geq2D+\tau t.
\end{equation}

We apply Lemma \ref{2lem:mod} on $[0,t_1]$ with $A'=\frac{A}{C_{2}}$.
We take $D_{0}$ larger and $a_{0}$ smaller if needed. So, there
exist $\mathcal{C}^{1}$ functions $x_{1,k}(t),x_{2,k}(t),c_{l}(t),x_{0,l}(t)\in\mathbb{R}$
with $c_{l}(t)>0$ defined for $t\in [0,t_1]$, such that if we set
\begin{equation}
\varepsilon(t):=u(t)-\widetilde{P}(t),
\end{equation}
with the same notations as usual, we have that
\begin{equation}
\forall t\in [0,t_1],\quad\Vert\varepsilon(t)\Vert_{H^{2}}\leq A\left(a+e^{-\theta D}\right),\quad\Vert\varepsilon(0)\Vert_{H^{2}}\leq C_{2}a,
\end{equation}
\begin{equation}
\forall t\in [0,t_1],\quad\vert c_{l}(t)-c_{l}^{0}\vert\leq A\left(a+e^{-\theta D}\right),\quad\vert c_{l}(0)-c_{l}^{0}\vert\leq C_{2}a,
\end{equation}
\begin{equation}
\sum_{l=1}^L\vert x_{0,l}(0)+x_{0,l}^0\vert+\sum_{k=1}^K\left(\vert x_{1,k}(0)-x_{1,k}^0\vert+\vert x_{2,k}(0)-x_{2,k}^0\vert\right)\leq Ca,
\end{equation}
\begin{equation}
\forall t\in [0,t_1],\quad\sum_{l=1}^L\left(\vert c_l'(t)\vert+\vert x_{0,l}'(t)\vert\right)+\sum_{k=1}^K\left(\vert x_{1,k}'(t)\vert +\vert x_{2,k}'(t)\vert\right)\leq CA(a+e^{-\theta D}),
\end{equation}
if $\theta>0$ is chosen small enough, and
\begin{equation}
\forall t\in [0,t_1],\quad\int\widetilde{B_{k}}_{1}(t)\varepsilon(t)=\int\widetilde{B_{k}}_{2}(t)\varepsilon(t)=\int\widetilde{R_{l}}(t)\varepsilon(t)=\int\widetilde{R_{l}}_{x}(t)\varepsilon(t)=0.
\end{equation}

Remember that \eqref{2cons5} for any $t\in[0,t_1]$ is a consequence of what is written above.

Let $I^*\subset\mathbb{R}_+$ be the supremum of intervals $I'\supset [0,t_1]$ for inclusion in the set of intervals of $\mathbb{R}_+$ such that the $C^{1}$
functions $x_{1,k}(t)$, $x_{2,k}(t)$, $c_{l}(t)$, $x_{0,l}(t)$ may be extended
on $I'$ and such that \eqref{2cond0}, (\ref{2cond1}), (\ref{2cond2}), \eqref{2cond}, (\ref{2cond3})
and (\ref{2cond4}) are still satisfied for any $t\in I'$.


In order to make the definition above licit, we need to point out the following fact: by uniqueness in Lemma \ref{2lem:mod}, we find that if we have two
extensions on $I_2$ and $I_3$, then 	we find that those extensions coincide on $I_2\cap I_3$ (we remind that $[0,t_1]\subset I_2\cap I_3$). This is why, if we have suitable extensions on $I_2$ and $I_3$, then we have a suitable extension on $I_2\cup I_3$. And, the supremum $I^*$ is simply the union of all the possible extensions $I'$ and the implicit functions may be extended on $I^*$.



When \eqref{2cond0}, (\ref{2cond1}), (\ref{2cond2}), \eqref{2cond}, (\ref{2cond3}) and (\ref{2cond4})
are true for $x_{1,k}(t),x_{2,k}(t),c_{l}(t),x_{0,l}(t)$ with $I=[0,t_2]\subset I^*$ (where $t_2\geq t_1$), we may extend
these implicit functions a bit further (in a random way, that we denote $y_{1,k}(t)$, $y_{2,k}(t)$, $y_{3,l}(t)$ and $y_{0,l}(t)$, and we denote $y_j$ their positions) and have \eqref{2cond0}, (\ref{2cond1}), (\ref{2cond2}), \eqref{2cond},
(\ref{2cond3}) and (\ref{2cond4}) that are still satisfied, but on
the extended interval. The extended interval will be an interval of the form $[0,t_2']$, where $t_2'>t_2$. We do it in the following way. First, we apply
Proposition \ref{2prop:boot} on $I$, and that makes
(\ref{2cond1}) and (\ref{2cond2}) a bit improved on $I$
and become (\ref{2result1}) and (\ref{2result2}) on $I$.
And so,  (\ref{2cond1})
and (\ref{2cond2}) are satisfied on an extension of $I$.
After application of Lemma \ref{2lem:mod} with $A'=A$ (where the notation $A'$ is from Lemma \ref{2lem:mod}), that we may apply thanks to \eqref{2cons5} (more precisely, from Remark \ref{rem:farness}, \eqref{2cons5} is satisfied on $I$, from what for any $1\leq j\leq J-1$, $\widetilde{y_{j+1}}(t)-\widetilde{y_j}(t)\geq D$ is true on an extension of $I$)
we see that (\ref{2cond4}) can be also extended (after modification of the implicit functions on the extension of $I$: we now denote them $x_{1,k}(t)$, $x_{2,k}(t)$, $x_{3,l}(t)$ and $x_{0,l}(t)$, as they coincide with implicit functions given at the beginning of this paragraph on $I$ from uniqueness in Lemma \ref{2lem:mod}).  Note that after this modification, it is needed to reconsider all the previous sentences of this paragraph, in order to take a smaller extension if needed and have \eqref{2cond1} and \eqref{2cond2} satisfied. Because of Remark \ref{rem:farness},  \eqref{2cons5} is satisfied on $I$, this is why for any $1\leq j\leq J-1$, $\widetilde{x_{j+1}}(t)-\widetilde{x_j}(t)\geq D$ is satisfied on an extension of $I$ (eventually smaller, but of the form $[0,t_2']$ as specified at the beginning of the paragraph).
This is why, we may apply consequences \eqref{2mod1} and \eqref{2mod2} of Lemma  \ref{2lem:mod} and obtain that \eqref{2cond3} is also satisfied on this extension of $I$.
Thus, we may
find an extension $[0,t_2']$ (with $t_2'>t_2$) of $I$ and extensions of $x_{1,k}(t),x_{2,k}(t),c_{l}(t),x_{0,l}(t)$
on this extension such that \eqref{2cond0} (\ref{2cond1}), (\ref{2cond2}), \eqref{2cond} (\ref{2cond3}) 
and (\ref{2cond4}) are satisfied.

We deduce that the interval $I^*$ is necesseraly an open subset of $\mathbb{R}_+$. Let us prove that $I^*$ is also a closed subset of $\mathbb{R}_+$.
This will allow us to conclude that $I^*=\mathbb{R}_+$.

If $I^*$ is not closed, then it is $[0,t_2)$, where $t_2>0$. Let $(T_n)$ be a sequence of points of $I^*$ that converges to $t_2$. Then, $(x_{1,k}(T_n))$, $(x_{2,k}(T_n))$, $(c_l(T_n))$ and $(x_{0,l}(T_n))$ are Cauchy sequences because of  \eqref{2cond3}. Thus, they converge and $x_{1,k}(t)$, $x_{2,k}(t)$, $c_l(t)$ and $x_{0,l}(t)$ may be extended continuously in $t=t_2$. 
By continuity, it is clear that \eqref{2cond1}, \eqref{2cond2}, \eqref{2cond4} and \eqref{2cons5} are still satisfied in $t=t_2$. This is why, we may apply Lemma \ref{2lem:mod} on $[0,t_2]$. From uniqueness in Lemma \ref{2lem:mod}, we find that $x_{1,k}(t)$, $x_{2,k}(t)$, $c_l(t)$ and $x_{0,l}(t)$ are $\mathcal{C}^1$ on $[0,t_2]$. Thus, \eqref{2cond3} is also satisfied in $t=t_2$. That contradicts the maximality of $I^*$. Thus, $I^*$ is closed in $\mathbb{R}_+$.


So, we deduce that we have $C^1$ functions $x_{1,k}(t),x_{2,k}(t),c_{l}(t),x_{0,l}(t)$
defined for any $t\geq0$ such that, 
\begin{equation}
\forall t\geq 0,\quad\Vert\varepsilon(t)\Vert_{H^{2}}\leq A\left(a+e^{-\theta D}\right),
\end{equation}
\begin{equation}
\forall t\geq 0,\quad\vert c_{l}(t)-c_{l}^{0}\vert\leq A\left(a+e^{-\theta D}\right),
\end{equation}
\begin{equation}
\forall t\geq 0,\quad\sum_{l=1}^L\left(\vert c_l'(t)\vert+\vert x_{0,l}'(t)\vert\right)+\sum_{k=1}^K\left(\vert x_{1,k}'(t)\vert +\vert x_{2,k}'(t)\vert\right)\leq CA(a+e^{-\theta D}),\label{2derivs}
\end{equation}
where $C>0$ is a large enough constant, and

\begin{equation}
\forall t\geq 0,\quad\int\widetilde{B_{k}}_{1}(t)\varepsilon(t)=\int\widetilde{B_{k}}_{2}(t)\varepsilon(t)=\int\widetilde{R_{l}}(t)\varepsilon(t)=\int\widetilde{R_{l}}_{x}(t)\varepsilon(t)=0.
\end{equation}

And we set 
\begin{equation}
w(t):=u(t)-\sum_{l=1}^{L}\kappa_lQ_{c_{l}^{0}}(\cdot+x_{0,l}(t)-c_{l}^0t)-\sum_{k=1}^{K}B_{\alpha_{k},\beta_{k}}(t,\cdot;x_{1,k}(t),x_{2,k}(t)).
\end{equation}

We need to bound $w$ to finish the proof. For $t\geq0$, we use triangular inequality
and we bound the norm between two ground states centered at a same
point, 
\begin{align}
\Vert w(t)\Vert_{H^{2}} & \leq\Vert\varepsilon(t)\Vert_{H^{2}}+\sum_{l=1}^{L}\left\Vert Q_{c_{l}(t)}-Q_{c_{l}^{0}}\right\Vert _{H^{2}}\nonumber \\
 & \leq\Vert\varepsilon(t)\Vert_{H^{2}}+C\sum_{l=1}^{L}\vert c_{l}(t)-c_{l}^{0}\vert\nonumber \\
 & \leq CA\left(a+e^{-\theta D}\right),\label{2eq:-23}
\end{align}
and this is exactly what we wanted to prove. 
Moreover \eqref{2deriv0} is a straightforward consequence of \eqref{2derivs}.
\end{proof}
Hence, we are left to prove Proposition \ref{2prop:boot}.

\subsection{Proof by induction}

We will prove Proposition \ref{2prop:boot} by induction. More precisely,
we want to find $A>C_{2},\theta,D_{0},a_{0}$ so that the Proposition \ref{2prop:boot}
holds for any $t^*>0$.

We assume that there exists functions $x_{1,k}(t),x_{2,k}(t),c_{l}(t),x_{0,l}(t)\in\mathbb{R}$
defined for $t\in [0,t^*]$ such that, with notations of Proposition
\ref{2prop:boot}, 
\begin{equation}
\forall t\in [0,t^*],\quad\Vert\varepsilon(t)\Vert_{H^{2}}\leq A(a+e^{-\theta D}),\quad\Vert\varepsilon(0)\Vert_{H^{2}}\leq C_{2}a,\label{2eq:first}
\end{equation}
\begin{equation}
\forall t\in [0,t^*],\quad\vert c_{l}(t)-c_{l}^{0}\vert\leq A(a+e^{-\theta D})\leq\min\left\{ \frac{\tau}{8}\right\} \cup\left\{ \frac{c_{p}^{0}}{4},0\leq p\leq L\right\} ,\quad\vert c_{l}(0)-c_{l}^{0}\vert\leq C_{2}a,\label{2eq:second}
\end{equation}
\begin{equation}
\forall t\in [0,t^*],\quad\sum_{l=1}^L\left(\vert c_l'(t)\vert+\vert x_{0,l}'(t)\vert\right)+\sum_{k=1}^K\left(\vert x_{1,k}'(t)\vert +\vert x_{2,k}'(t)\vert\right)\leq CA(a+e^{-\theta D}),
\end{equation}
\begin{equation}
\forall t\in [0,t^*],\quad\int\widetilde{B_{k}}_{1}(t)\varepsilon(t)=\int\widetilde{B_{k}}_{2}(t)\varepsilon(t)=\int\widetilde{R_{l}}(t)\varepsilon(t)=\int\widetilde{R_{l}}_{x}(t)\varepsilon(t)=0,\label{2orth-cond}
\end{equation}
as well as \eqref{2cond} and \eqref{2cond0}.

As in Remark \ref{rem:farness}, we deduce that \eqref{2cons3}, \eqref{2cons4} and \eqref{2cons5} are satisfied.

And the goal is to improve inequalities (\ref{2eq:first}) and (\ref{2eq:second}). 

We define the average between positions of two consecutive objects.
For $j=3,...,J$, we set 
\begin{equation}
\label{2m_j}
\forall t\in [0,t^*],\quad m_{j}(t):=\frac{\widetilde{x_{j-1}}(t)+\widetilde{x_{j}}(t)}{2},
\end{equation}
and we set 
\begin{equation}
\label{2m_2}
\forall t\in [0,t^*],\quad m_{2}(t):=\frac{\widetilde{x_{1}}(0)+\widetilde{x_{2}}(0)}{2}+\int_{0}^{t}\max\left(\frac{\widetilde{x_{1}}'(s)+\widetilde{x_{2}}'(s)}{2},\frac{\widetilde{x_{2}}'(s)}{2}\right)ds.
\end{equation}
With these definitions, we make sure that for any $j=2,...,J$, $m_j'(t)>0$, even if $v_1<0$.

By (\ref{2cons3}), for $j\geq3$, 
\begin{equation}
\label{2diff_vit}
\forall t\in [0,t^*],\quad\widetilde{x_{j}}'(t)-m_{j}'(t)\geq\frac{\tau}{4},\quad m_{j}'(t)-\widetilde{x_{j-1}}'(t)\geq\frac{\tau}{4},
\end{equation}
and for $j=2$, we have 
\begin{equation}
\forall t\in [0,t^*],\quad m_{2}'(t)=\max\left(\frac{\widetilde{x_{1}}'(t)+\widetilde{x_{2}}'(t)}{2},\frac{\widetilde{x_{2}}'(t)}{2}\right),
\end{equation}
and so, 
\begin{equation}
\forall t\in [0,t^*],\quad m_{2}'(t)-\widetilde{x_{1}}'(t)\geq\frac{\tau}{4},\quad\widetilde{x_{2}}'(t)-m_{2}'(t)\geq\min\left(\frac{v_{2}}{4},\frac{\tau}{4}\right).
\end{equation}

This is why, we set 
\begin{equation}
\zeta:=\min\left(\frac{v_{2}}{4},\frac{\tau}{4}\right),
\end{equation}
a constant that depends only on problem data, and so for any $j\geq 2$,
\begin{equation}
\forall t\in [0,t^*],\quad\widetilde{x_{j}}'(t)-m_{j}'(t)\geq\zeta,\quad m_{j}'(t)-\widetilde{x_{j-1}}'(t)\geq\zeta.\label{2new-minor}
\end{equation}

The latter implies that $\forall j\geq2,\quad\forall t\in [0,t^*],\quad\widetilde{x_{j-1}}(t)<m_{j}(t)<\widetilde{x_{j}}(t)$,
and we may deduce by integration and by (\ref{2cons5}) and (\ref{2new-minor})
that 
\begin{align}
\widetilde{x_{j}}(t)-m_{j}(t) & =\widetilde{x_{j}}(0)-m_{j}(0)+\int_{0}^{t}\left(\widetilde{x_{j}}'(s)-m_{j}'(s)\right)ds\nonumber \\
 & =\frac{\widetilde{x_{j}}(0)-\widetilde{x_{j-1}}(0)}{2}+\int_{0}^{t}\left(\widetilde{x_{j}}'(s)-m_{j}'(s)\right)ds\nonumber \\
 & \geq\frac{D}{2}+\zeta t,\label{2eq:-22}
\end{align}
and similarly, 
\begin{equation}
\label{2e:-22-2}
m_{j}(t)-\widetilde{x_{j-1}}(t)\geq\frac{D}{2}+\zeta t.
\end{equation}

We have that (the $m_{j}$ are chosen for that) for any $j\geq2$,
\begin{equation}
\forall t\in [0,t^*],\quad2v_{J}\geq\widetilde{x_{J}}'(t)\geq m_{j}'(t)\geq\frac{v_{2}}{4}\geq\zeta.
\end{equation}


We will reason by induction in order to improve (\ref{2eq:first}) and (\ref{2eq:second}). For this, we introduce a cut-off function.

Let $\sigma>0$ be a constant small enough for which the conditions
will be fixed in the following of the proof.

We denote: 
\begin{equation}
\Psi(x):=\frac{2}{\pi}\arctan\left(\exp\left(\sqrt{\sigma}x/2\right)\right).
\end{equation}

By direct calculations, 
\begin{equation}
\Psi'(x)=\frac{\sqrt{\sigma}}{2\pi\cosh\left(\sqrt{\sigma}x/2\right)},
\end{equation}
and so, 
\begin{equation}
\label{2der_exp}
\vert\Psi'(x)\vert\leq C\exp\left(-\sqrt{\sigma}\vert x\vert/2\right).
\end{equation}

We have the following properties: $\lim_{+\infty}\Psi=1$, $\lim_{-\infty}\Psi=0$,
for all $x\in\mathbb{R}$ $\Psi(-x)+\Psi(x)=1$, $\Psi'(x)>0$, $\vert\Psi''(x)\vert\leq\frac{\sqrt{\sigma}}{2}\vert\Psi'(x)\vert$,
$\vert\Psi'''(x)\vert\leq\frac{\sqrt{\sigma}}{2}\vert\Psi''(x)\vert$,
$\vert\Psi'(x)\vert\leq\frac{\sqrt{\sigma}}{2}\Psi$ and $\vert\Psi'(x)\vert\leq\frac{\sqrt{\sigma}}{2}\left(1-\Psi\right)$.

We define cut-off functions filtering $P_j$ and all the objects faster than $P_j$: for $j=2,...,J,$ 
\begin{equation}
\Phi_{j}(t,x):=\Psi(x-m_{j}(t)).
\end{equation}

We have: 
\begin{equation}
\left(\Phi_{j}\right)_{t}=-m_{j}'\left(\Phi_{j}\right)_{x}.
\end{equation}

We may extend this definition to $j=1$ and $j=J+1$ in the following
way: $\Phi_{1}:=1$ and $\Phi_{J+1}:=0$.

In order to prove Proposition \ref{2prop:boot} by induction, we will
find an increasing sequence $(Z_{j})_{j=1,...,J+1}$ such that $Z_{1}:=2$
and $Z_{J+1}:=+\infty$ and such that we will be able to prove the
following proposition for any $j=1,...,J$. The goal is to obtain
inequalities of Proposition \ref{2prop:boot}  with better constants.
So, in order to achieve this, we do the following induction: we suppose
that localized inequalities around $P_{j+1},...,P_{J}$ are obtained
with strongly improved constants (constants divided by $Z_{j+1}$),
and we deduce from them localized inequalities around $P_{j},...,P_{J}$
with improved constants, but a little bit less improved than earlier
(constants divided by $Z_{j}$). We will also assume the bootstrap
assumption. This induction is sufficient, because it starts from an
assumption on an empty set of objects and ends with a conclusion with
inequalities localized around all the objects, i.e. global (not localized at all).
\begin{prop}
\label{2prop:ind}Assuming that 
\begin{equation}
\forall t\in [0,t^*],\quad\int\left(\varepsilon^{2}+\varepsilon_{x}^{2}+\varepsilon_{xx}^{2}\right)\Phi_{j+1}\leq\left(\frac{A}{Z_{j+1}}\right)^{2}\left(a^{2}+e^{-2\theta D}\right),\label{2epsilon-assump}
\end{equation}
and for any $j'\geq j+1$ such that $P_{j'}=R_{l}$ is a soliton,
\begin{equation}
\forall t\in [0,t^*],\quad\vert c_{l}(t)-c_{l}(0)\vert\leq\left(\frac{A}{Z_{j+1}}\right)^{2}\left(a^{2}+e^{-2\theta D}\right),\label{2velo-assump}
\end{equation}
and for any $j'\geq j+1$ such that $P_{j'}$ is a breather, 
\begin{equation}
\forall t\in [0,t^*],\quad\left|\int\widetilde{P_{j'}}\varepsilon(t)-\int\widetilde{P_{j'}}\varepsilon(0)\right|\leq\left(\frac{A}{Z_{j+1}}\right)^{2}\left(a^{2}+e^{-2\theta D}\right),\label{2br-m-assump}
\end{equation}
and 
\begin{equation}
\forall t\in [0,t^*],\quad\left|\int\left[\widetilde{P_{j'}}_{xx}+\widetilde{P_{j'}}^{3}\right]\varepsilon(t)-\int\left[\widetilde{P_{j'}}_{xx}+\widetilde{P_{j'}}^{3}\right]\varepsilon(0)\right|\leq\left(\frac{A}{Z_{j+1}}\right)^{2}\left(a^{2}+e^{-2\theta D}\right),\label{2br-e-assump}
\end{equation}

we have that 
\begin{equation}
\forall t\in [0,t^*],\quad\int\left(\varepsilon^{2}+\varepsilon_{x}^{2}+\varepsilon_{xx}^{2}\right)\Phi_{j}\leq\left(\frac{A}{Z_{j}}\right)^{2}\left(a^{2}+e^{-2\theta D}\right),\label{2epsilon-ccl}
\end{equation}
and if $P_{j}=R_{l}$ is a soliton, 
\begin{equation}
\forall t\in [0,t^*],\quad\vert c_{l}(t)-c_{l}(0)\vert\leq\left(\frac{A}{Z_{j}}\right)^{2}\left(a^{2}+e^{-2\theta D}\right),\label{2velo-ccl}
\end{equation}
and if $P_{j}$ is a breather, 
\begin{equation}
\forall t\in [0,t^*],\quad\left|\int\widetilde{P_{j}}\varepsilon(t)-\int\widetilde{P_{j}}\varepsilon(0)\right|\leq\left(\frac{A}{Z_{j}}\right)^{2}\left(a^{2}+e^{-2\theta D}\right),\label{2eq:mass_var}
\end{equation}
and 
\begin{equation}
\forall t\in [0,t^*],\quad\left|\int\left[\widetilde{P_{j}}_{xx}+\widetilde{P_{j}}^{3}\right]\varepsilon(t)-\int\left[\widetilde{P_{j}}_{xx}+\widetilde{P_{j}}^{3}\right]\varepsilon(0)\right|\leq\left(\frac{A}{Z_{j}}\right)^{2}\left(a^{2}+e^{-2\theta D}\right).\label{2eq:energ_var}
\end{equation}
\end{prop}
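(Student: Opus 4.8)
The plan is to run a Lyapunov-functional argument localized on the last $j$ objects, of the type introduced by Martel--Merle--Tsai, adapted to the $H^2$ setting so that breathers can be treated on the same footing as solitons. The backbone is a localized functional of the schematic form
\begin{equation}
\mathcal{F}_j(t):=F[u]\text{-type terms weighted by }\Phi_j(t,\cdot)+\text{lower-order corrections},
\end{equation}
built from the conserved densities $M$, $E$, $F$ and from the elliptic equations satisfied by $\widetilde{R_l}$ and $\widetilde{B_k}$ (equation \eqref{2GS} and its fourth-order reformulation, and the fourth-order elliptic equation for $B$), combined with the appropriate multipliers so that $\widetilde{P}$ is a critical point of $\mathcal{F}_j$. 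First I would compute $\frac{d}{dt}\mathcal{F}_j(t)$ using the equation \eqref{2mKdV} satisfied by $u$, the equations satisfied by the $\widetilde{P_{j'}}$, and the modulation equations \eqref{2mod1}--\eqref{2mod2} for the parameters. The time derivative splits into: a term carrying $m_j'(t)\Phi_j'$ (good, because $m_j'>0$ by construction and $\Phi_j'>0$, and by \eqref{2new-minor} the speed $m_j'$ lies strictly between the speeds of $P_{j-1}$ and $P_j$, so this term is coercive in the region where the slow objects live); interaction terms between distinct $\widetilde{P_{j'}}$, which are exponentially small by the separation \eqref{2cons5}, \eqref{2eq:-22}, \eqref{2e:-22-2}, hence bounded by $Ce^{-\theta D}e^{-\zeta t/C}$ after choosing $\theta$ small relative to $\zeta,\beta,\sigma$; and error terms from modulation, which by \eqref{2mod1}--\eqref{2mod2} are controlled by $\int e^{-\frac{\beta}{2}|x-z_\bullet|}\varepsilon^2$ plus $e^{-\beta D/8}$, hence absorbable.

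\textbf{Key steps in order.} (1) Set up $\mathcal{F}_j$ and verify, using \eqref{2GS}, its fourth-order analogue for solitons from \cite{key-49}, and the fourth-order elliptic equation for breathers, that $\widetilde{P_j},\dots,\widetilde{P_J}$ together with the cut-off make $\mathcal{F}_j$ stationary at $\varepsilon=0$ to first order, so that the quadratic part $\mathcal{Q}_j(\varepsilon)$ dominates. (2) Prove coercivity of $\mathcal{Q}_j$: in the region $x>m_j(t)$ the functional localizes onto $\sum_{j'\ge j}\widetilde{P_{j'}}$, and near each $\widetilde{P_{j'}}$ one invokes the spectral/variational results of Weinstein \cite{key-6} for solitons (at the $H^1$ level, then upgraded to $H^2$ by the modified functional as announced in the introduction) and of Alejo--Mu\~noz \cite{key-1} for breathers (Hessian coercive up to three orthogonality conditions), the orthogonality conditions \eqref{2cond4}/\eqref{2orth} being exactly what removes the negative/null directions; between objects and to the far right the $\Phi_j'$-localized kinetic term gives coercivity. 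One gets $\mathcal{Q}_j(\varepsilon)\gtrsim \int(\varepsilon^2+\varepsilon_x^2+\varepsilon_{xx}^2)\Phi_j - C\int(\varepsilon^2+\varepsilon_x^2+\varepsilon_{xx}^2)\Phi_{j+1} - C e^{-\theta D}$, where the $\Phi_{j+1}$ term is handled by the induction hypothesis \eqref{2epsilon-assump}. (3) Integrate the differential inequality for $\mathcal{F}_j$ in time, using $\mathcal{F}_j(0)\le C(a^2+e^{-2\theta D})$ (from $\|\varepsilon(0)\|_{H^2}\le C_2 a$) and the exponential-in-$D$-and-$t$ smallness of all bad terms, to conclude $\int(\varepsilon^2+\varepsilon_x^2+\varepsilon_{xx}^2)\Phi_j\le (A/Z_j)^2(a^2+e^{-2\theta D})$, i.e. \eqref{2epsilon-ccl}, provided $Z_j$ is chosen (depending on the implicit constants and on $Z_{j+1}$) and $A$ large, $D_0$ large, $a_0$ small. (4) For the parameter estimates \eqref{2velo-ccl}--\eqref{2eq:energ_var}: if $P_j$ is a soliton, differentiate the almost-conserved localized mass/energy $\int u^2\Phi_j$-type quantities, use \eqref{2GS} and the modulation equations, and extract $|c_l(t)-c_l(0)|$ from the resulting identity as in \cite{key-8}; if $P_j$ is a breather, the two quantities $\int\widetilde{P_j}\varepsilon$ and $\int[\widetilde{P_j}_{xx}+\widetilde{P_j}^3]\varepsilon$ play the role of the two modulation-compatible "conserved projections," and their time variation is bounded by the same localized-$\varepsilon$ and exponential terms, giving \eqref{2eq:mass_var}--\eqref{2eq:energ_var}.

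\textbf{Main obstacle.} The hard part will be step (2), the coercivity of the localized quadratic form $\mathcal{Q}_j$ at the $H^2$ level, uniformly in the translation parameters and compatibly with the cut-off $\Phi_j$. For breathers alone Alejo--Mu\~noz provide an $H^2$ coercive Hessian up to three orthogonality conditions, but here one must: (a) add solitons into the same $H^2$ functional, which forces the modification of the Lyapunov functional announced in the introduction so that a soliton is also a critical point at the $H^2$ level with a coercive Hessian up to two conditions; (b) glue these local coercivities across the cut-off, controlling the commutator terms produced when $\Phi_j$ meets the differential operators in $F$ (these are $\Phi_j'$-supported, hence exponentially localized away from the centers by \eqref{2der_exp} and by the separation \eqref{2eq:-22}, but bookkeeping the $\sigma$-dependence so that $\sigma$ can be fixed small enough is delicate); and (c) handle the cross terms $\widetilde{P_{j'}}\widetilde{P_{j''}}$ inside the quadratic form, which are exponentially small in $D$ but must not spoil positivity — this is where the condition $v_2>0$ enters, since monotonicity of $m_j'$ and the sign of the transport term $m_j'\Phi_j'$ are only available when at most one object moves to the left. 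A secondary technical point is choosing the constants in the right order: $\sigma$ small depending on $\beta,\tau,v_2$; then $\theta$ small depending on $\sigma,\zeta,\beta$; then $Z_j$ by downward induction; then $A$ large, $D_0$ large, $a_0$ small — and checking that none of these choices is circular.
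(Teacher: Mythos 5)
Your outline follows essentially the same route as the paper: localized functionals $M_j,E_j,F_j$ with the cut-off $\Phi_j$, almost-monotonicity on the right (using $m_j'>0$ and the separation of the objects), a Taylor expansion of these functionals around $\widetilde{P}$, the Lyapunov combination $F_j+2(b_j^2-a_j^2)E_j+(a_j^2+b_j^2)^2M_j$, coercivity of the localized Hessian under the orthogonality conditions, and the downward induction in $j$ with the constants $Z_j$. The identification of where $v_2>0$ enters and of the order in which $\sigma,\theta,Z_j,A,D_0,a_0$ must be fixed is also consistent with the paper. However, two points in your plan, as written, would not close, and both require a specific additional ingredient that the paper supplies.

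First, for a breather your coercivity statement in step (2) is too strong: the Hessian of the $H^2$ functional at $\widetilde{B_k}$ is coercive only up to \emph{three} directions, while the modulation \eqref{2cond4} imposes only the two conditions $\int\partial_{x_1}\widetilde{B_k}\,\varepsilon=\int\partial_{x_2}\widetilde{B_k}\,\varepsilon=0$. The third direction, $\int\widetilde{B_k}\varepsilon$, is \emph{not} zero, so the correct coercivity inequality carries an extra term $C\bigl(\int\varepsilon\widetilde{P_j}\bigr)^2$ on the right-hand side. Consequently the proof of \eqref{2epsilon-ccl} for a breather cannot be decoupled from \eqref{2eq:mass_var} as in your steps (3)--(4): one must first derive a two-sided bound on $\int\widetilde{P_j}\varepsilon(t)-\int\widetilde{P_j}\varepsilon(0)$ (one side from the almost-decay of the localized mass, the other from the combination $f+2(b_j^2-a_j^2)e+(a_j^2+b_j^2)^2m$ evaluated on $\widetilde{P_j}$ via its elliptic equation), obtain $\bigl|\int\widetilde{P_j}\varepsilon(t)\bigr|\le C\int(\varepsilon^2+\varepsilon_x^2+\varepsilon_{xx}^2)\Phi_j+C(a+e^{-2\theta D})+\dots$, square it, and only then feed it into the coercivity estimate, absorbing the quadratic term thanks to the bootstrap smallness. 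Second, for a soliton the value of the Lyapunov functional along the modulated family, $\mathcal{K}(t)=c_l(t)^{5/2}F[Q]+2c_l^0c_l(t)^{3/2}E[Q]+(c_l^0)^2c_l(t)^{1/2}M[Q]$, must have variation $O\bigl((A/Z_{j+1})^2(a^2+e^{-2\theta D})\bigr)$; criticality of the functional (vanishing of the first-order term in $c_l(t)-c_l^0$) is not enough, because the second-order term would contribute $O(|c_l(t)-c_l^0|^2)=O(A^2(a^2+e^{-2\theta D}))$ under the bootstrap, with no gain in $Z_j$, which destroys the induction $C(1+(A/Z_{j+1})^2)\le(A/Z_j)^2$. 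The paper closes this via the exact cancellation $\tfrac{15}{8}F[Q]+\tfrac34E[Q]-\tfrac18M[Q]=0$ (using $M[Q]=2$, $E[Q]=-\tfrac23$, $F[Q]=\tfrac25$), so that $\mathcal{K}$ is constant up to a cubic remainder. This second-order identity is the concrete content of the ``$H^2$ variational structure of the soliton'' you allude to, and your plan needs it explicitly.
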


\begin{rem}
\label{2rem:P_j}
For $j=1,...,J$, we may denote by $\mathcal{P}_{j}$ the following
assertion: (\ref{2epsilon-ccl}), (\ref{2velo-ccl}), (\ref{2eq:mass_var})
and (\ref{2eq:energ_var}). The Proposition \ref{2prop:ind} may be
reformulated in the following way:

There exists an increasing sequence $(Z_{j})_{j=1,...,J+1}$ with
$Z_{1}=2$ and $Z_{J+1}=+\infty$, $A$ large enough and $\theta>0$
such that for $D_{0}$ large enough and for $a_{0}$ small enough,
we have the following: for any $j=1,...,J$, 
\begin{equation}
\mathcal{P}_{j+1}\implies\mathcal{P}_{j}.\label{2eq:-21}
\end{equation}
\end{rem}

\begin{rem}
Note that the inequalities (\ref{2eq:mass_var}) and (\ref{2eq:energ_var})
imply the following inequality for any $t\in [0,t^*]$ in the case
when $P_{j}$ is a breather: 
\begin{align}
\left|\int\left[\widetilde{P_{j}}_{xxxx}+5\widetilde{P_{j}}\widetilde{P_{j}}_{x}^{2}+5\widetilde{P_{j}}^{2}\widetilde{P_{j}}_{xx}+\frac{3}{2}\widetilde{P_{j}}^{5}\right]\varepsilon(t)\right.\nonumber \\
\left.-\int\left[\widetilde{P_{j}}_{xxxx}+5\widetilde{P_{j}}\widetilde{P_{j}}_{x}^{2}+5\widetilde{P_{j}}^{2}\widetilde{P_{j}}_{xx}+\frac{3}{2}\widetilde{P_{j}}^{5}\right]\varepsilon(0)\right| & \leq C\left(\frac{A}{Z_{j}}\right)^{2}\left(a^{2}+e^{-2\theta D}\right),\label{2eq:ineq}
\end{align}
because of the elliptic equation verified by $\widetilde{P_{j}}$,
which, in the case when $\widetilde{P_{j}}=\widetilde{B_{k}}$ is
a breather is the following: 
\begin{equation}
\widetilde{B_{k}}_{xxxx}+5\widetilde{B_{k}}\widetilde{B_{k}}_{x}^{2}+5\widetilde{B_{k}}^{2}\widetilde{B_{k}}_{xx}+\frac{3}{2}\widetilde{B_{k}}^{5}-2\left(\beta_{k}^{2}-\alpha_{k}^{2}\right)\left(\widetilde{B_{k}}_{xx}+\widetilde{B_{k}}^{3}\right)+\left(\alpha_{k}^{2}+\beta_{k}^{2}\right)^{2}\widetilde{B_{k}}=0.\label{2ellip-br}
\end{equation}
But the inequality (\ref{2eq:ineq}) is also true in the case when
$\widetilde{P_{j}}=\widetilde{R_{l}}$ is a soliton, because we have
even better in this case. There are two elliptic equations \cite{key-49}: 
\begin{equation}
\widetilde{R_{l}}_{xxxx}+5\widetilde{R_{l}}\widetilde{R_{l}}_{x}^{2}+5\widetilde{R_{l}}^{2}\widetilde{R_{l}}_{xx}+\frac{3}{2}\widetilde{R_{l}}^{5}-2c_{l}(t)\left(\widetilde{R_{l}}_{xx}+\widetilde{R_{l}}^{3}\right)+c_{l}(t)^{2}\widetilde{R_{l}}=0,
\end{equation}
and 
\begin{equation}
\left(\widetilde{R_{l}}_{xx}+\widetilde{R_{l}}^{3}\right)-c_{l}(t)\widetilde{R_{l}}=0,
\end{equation}
which implies that (\ref{2orth-cond}) implies 
\begin{equation}
\int\left(\widetilde{R_{l}}_{xx}+\widetilde{R_{l}}^{3}\right)\varepsilon=0,\label{2orth-energ}
\end{equation}
and 
\begin{equation}
\int\left(\widetilde{R_{l}}_{xxxx}+5\widetilde{R_{l}}\widetilde{R_{l}}_{x}^{2}+5\widetilde{R_{l}}^{2}\widetilde{R_{l}}_{xx}+\frac{3}{2}\widetilde{R_{l}}^{5}\right)\varepsilon=0,\label{2orth-F}
\end{equation}
which implies of course the inequality (\ref{2eq:ineq}). 
\end{rem}

The proof of Proposition \ref{2prop:ind} will be the goal of the Section
3. The proof of Proposition \ref{2prop:boot} follows from it. 
\begin{proof}[Proof of Proposition \ref{2prop:boot} assuming Proposition \ref{2prop:ind}]
We perform the induction in the decreasing order: $j=J,J-1,...,2,1$.
$\mathcal{P}_{J+1}$ is empty, and Proposition  \ref{2prop:ind} gives the (decreasing) induction step. Hence, $\mathcal{P}_1,\dots,\mathcal{P}_J$ are true. Due to $\mathcal{P}_{1}$: 
\begin{equation}
\forall t\in [0,t^*],\quad\Vert\varepsilon(t)\Vert_{H^{2}}\leq\frac{A}{2}\left(a+e^{-\theta D}\right).
\end{equation}

For $l=1,...,L$, we have from $\mathcal{P}_1$ that for any $t\in [0,t^*]$,
\begin{align}
\vert c_{l}(t)-c_{l}^{0}\vert & \leq\vert c_{l}(t)-c_{l}(0)\vert+\vert c_{l}(0)-c_{l}^{0}\vert\nonumber \\
 & \leq\left(\frac{A}{2}\right)^{2}\left(a^{2}+e^{-2\theta D}\right)+C_{2}a\nonumber \\
 & \leq\left[\left(\frac{A}{2}\right)^{2}\left(a+e^{-\theta D}\right)+C_{2}\right]\left(a+e^{-\theta D}\right).\label{2eq:-20}
\end{align}
If we take $A$ large enough with respect to $C_{2}$, and $a_{0}$
smaller and $D_{0}$ larger if needed with respect to $A$ and $\theta$,
then 
\begin{equation}
\left(\frac{A}{2}\right)^{2}\left(a+e^{-\theta D}\right)+C_{2}\leq\frac{A}{2},
\end{equation}
and that concludes the proof of Proposition \ref{2prop:boot}.

\end{proof}
Hence, we are left to prove Proposition \ref{2prop:ind}. We will write
the proof for a fixed $j\in\{1,...,J\}$. We assume $\mathcal{P}_{j+1}$
with a set of constants $Z_{1}$, ..., $Z_{J+1}$, $A$, $\theta,D_{0}$, $a_{0}$.
We will establish some conditions for these constants during the proof.

\section{Orbital stability of a sum of solitons and breathers in $H^{2}(\mathbb{R})$}
\label{2sec:orb}

In this Section, we prove Proposition \ref{2prop:ind}. We assume $\mathcal{P}_{j+1}$
and we prove $\mathcal{P}_{j}$.

\subsection{Almost decay of conservation laws at the right}

We localize around the most right objects, starting from and including
the $j$-th. We set: 
\begin{equation}
M_{j}(t):=\frac{1}{2}\int u^{2}(t)\Phi_{j}(t)=:M_{j}[u](t),
\end{equation}
\begin{equation}
E_{j}(t):=\int\left[\frac{1}{2}u_{x}^{2}-\frac{1}{4}u^{4}\right]\Phi_{j}(t)=:E_{j}[u](t),
\end{equation}
\begin{equation}
F_{j}(t):=\int\left[\frac{1}{2}u_{xx}^{2}-\frac{5}{2}u^{2}u_{x}^{2}+\frac{1}{4}u^{6}\right]\Phi_{j}(t)=:F_{j}[u](t).
\end{equation}

\begin{lem}
\label{2lem:mono} Let $0<\omega_{1},\omega_{2}<1$. If $0<\sigma<\zeta$,
$0<\theta<\frac{\sqrt{\sigma}}{16}$, $D_{0}$ is large enough and
$a_{0}$ is small enough (depending on $A$, $\theta$, $\omega_{1}$
and $\omega_{2}$), then for any $t\in [0,t^*]$, 
\begin{equation}
M_{j}(t)-M_{j}(0)\leq Ce^{-2\theta D},\label{2mono-m}
\end{equation}
\begin{equation}
\left(E_{j}(t)+\omega_{1}M_{j}(t)\right)-\left(E_{j}(0)+\omega_{1}M_{j}(0)\right)\leq Ce^{-2\theta D},\label{2mono-e}
\end{equation}
\begin{equation}
\left(F_{j}(t)+\omega_{2}M_{j}(t)\right)-\left(F_{j}(0)+\omega_{2}M_{j}(0)\right)\leq Ce^{-2\theta D}.\label{2mono-f}
\end{equation}
\end{lem}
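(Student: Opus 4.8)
The plan is to prove the three almost-monotonicity estimates \eqref{2mono-m}, \eqref{2mono-e}, \eqref{2mono-f} simultaneously by the classical weighted-virial argument of Martel--Merle--Tsai, adapted to the cut-off $\Phi_j(t,x)=\Psi(x-m_j(t))$ that travels at the speed $m_j'(t)$ sitting strictly between $\widetilde{x_{j-1}}'$ and $\widetilde{x_j}'$ (this is exactly what \eqref{2new-minor} gives). First I would differentiate each localized quantity in time: using the equation \eqref{2mKdV}, an integration by parts gives, schematically,
\begin{equation}
\frac{d}{dt}M_j(t)=-\frac32\int u_x^2(\Phi_j)_x-\frac34\int u^4(\Phi_j)_x+\frac12\int u^2(\Phi_j)_{xxx}+m_j'\int u^2(\Phi_j)_x\cdot\frac12 \,(\text{sign bookkeeping}),
\end{equation}
and analogous but longer identities for $\frac{d}{dt}E_j$ and $\frac{d}{dt}F_j$ coming from the conservation-law densities. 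Because $(\Phi_j)_t=-m_j'(\Phi_j)_x$, the transport term contributes $-m_j'$ times (density) integrated against $(\Phi_j)_x$; the point is that $(\Phi_j)_x=\Psi'(\cdot-m_j)\ge 0$ is a positive bump of width $\sim\sigma^{-1/2}$ localized near $x=m_j(t)$, so every term in $\frac{d}{dt}(\cdot)$ is an integral of something times $\Psi'(x-m_j(t))$.

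Next I would split the region $\mathbb{R}$ according to this bump: in a neighbourhood of $x=m_j(t)$ the solution $u$ is $O(\varepsilon)+O(e^{-\theta D})$-small because all the solitons/breathers are exponentially far from $m_j(t)$ (by \eqref{2eq:-22}, \eqref{2e:-22-2} the distance from $m_j(t)$ to the nearest object center grows like $\tfrac D2+\zeta t$), and away from $m_j(t)$ the weight $\Psi'$ is exponentially small, of size $e^{-\sqrt\sigma |x-m_j|/2}$. Thus every term is controlled: the linear-in-$u$-small, quadratic-and-higher terms involving the solitons/breathers are bounded by $Ce^{-\sqrt\sigma(\tfrac D2+\zeta t)}$, hence time-integrable and contributing $O(e^{-\theta D})$ provided $\theta<\sqrt\sigma/16$ and $\sigma<\zeta$ (this is where those hypotheses are used, together with smallness of $a_0$ and largeness of $D_0$); and the genuinely quadratic-in-$\varepsilon$ "bad" terms of the form $\int \varepsilon^2(\Phi_j)_x$, $\int\varepsilon_x^2(\Phi_j)_x$, etc., get a favourable sign from the transport term. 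Concretely, the leading quadratic part of $\frac{d}{dt}M_j$ is $\tfrac12 m_j'\int\varepsilon^2\Psi' -\tfrac32\int\varepsilon_x^2\Psi'+\dots$, which is $\le 0$ up to lower-order terms once $m_j'>0$ and $\sigma$ is small (so that $\Psi'''\le\tfrac\sigma4\Psi'$ absorbs the higher-derivative cross terms). For $E_j$ and $F_j$ the quadratic form in $\varepsilon$ picked up this way is not sign-definite by itself, which is why one adds $\omega_1 M_j$, resp.\ $\omega_2 M_j$: the term $\omega_i m_j'\int\varepsilon^2\Psi'$ (with $\omega_i\in(0,1)$ fixed) is exactly what is needed to complete the square / dominate the indefinite pieces, using $m_j'\ge v_2/4>0$ — this positivity of $m_j'$, guaranteed by the construction \eqref{2m_j}--\eqref{2m_2} even when $v_1<0$, is the structural reason the hypothesis $v_2>0$ enters.

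The main obstacle I anticipate is the bookkeeping for $F_j$: its density $\tfrac12 u_{xx}^2-\tfrac52 u^2u_x^2+\tfrac14 u^6$ produces, after the integration by parts against $\Phi_j$, many cross terms mixing $\widetilde P$ and $\varepsilon$ up to two derivatives, and one must check that (i) all purely-$\widetilde P$ terms reduce to time-integrable exponentials via the separation estimates, (ii) all terms linear in $\varepsilon$ either vanish by the elliptic equations \eqref{2ellip-br} / the $R_l$-equations (together with the orthogonality \eqref{2orth-cond} and its consequences \eqref{2orth-energ}, \eqref{2orth-F}) or are exponentially small, and (iii) the remaining quadratic-and-higher terms in $\varepsilon$ are either of the good-sign virial type or are bounded by $\|\varepsilon\|_{H^2}\int(\varepsilon^2+\varepsilon_x^2)\Psi'$, hence negligible after using the bootstrap bound $\|\varepsilon\|_{H^2}\le A(a+e^{-\theta D})$ and taking $a_0$ small, $D_0$ large. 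Once $\frac{d}{dt}(F_j+\omega_2 M_j)\le C e^{-2\theta D}\cdot(\text{time-integrable})$ pointwise, integrating from $0$ to $t$ yields \eqref{2mono-f}, and the same scheme (simpler) gives \eqref{2mono-m} and \eqref{2mono-e}. I would also invoke the modulation estimates \eqref{2mod1}--\eqref{2mod2} to handle the extra terms coming from the fact that $u-\widetilde P=\varepsilon$ and $\widetilde P$ depends on the modulated parameters, whose derivatives are controlled by $(\int e^{-\frac\beta2|x-\widetilde x_i|}\varepsilon^2)^{1/2}+e^{-\beta D/8}$; these, being multiplied by exponentially-localized $\widetilde P$-factors, contribute only lower-order or time-integrable terms as well.
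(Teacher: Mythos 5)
Your overall strategy is the right one and coincides with the paper's: differentiate the localized functionals via the Appendix identities, use $(\Phi_j)_t=-m_j'(\Phi_j)_x$ with $m_j'\ge \zeta>\sigma$ to produce good-sign terms, absorb $\Phi_{jxxx}$ via $\vert\Phi_{jxxx}\vert\le\frac{\sigma}{4}\Phi_{jx}$, split space into the central region $[\widetilde{x_{j-1}}(t)+r,\widetilde{x_j}(t)-r]$ versus its complement, and use $\omega_1 M_j$, $\omega_2 M_j$ to dominate the residual indefinite pieces of $E_j'$, $F_j'$. The main structural difference is that the paper never decomposes $u=\widetilde P+\varepsilon$ inside this lemma: it works with $u$ itself. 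Its only inputs are (i) the identities of the Appendix, (ii) the pointwise smallness of $u$ on the central region (Sobolev embedding for $\varepsilon$ plus exponential decay of $\widetilde P$ away from the object centers), which turns $u^4$, $u^2u_x^2$, $u^6$, $uu_{xx}u_x^2$, etc.\ into arbitrarily small multiples of $u^2$, $u_x^2$, $u_{xx}^2$ there, and (iii) the bound $\vert\Phi_{jx}\vert\le Ce^{-\frac{\sqrt\sigma\zeta}{2}t}e^{-\frac{\sqrt\sigma}{8}D}$ off the central region, which after time integration yields $Ce^{-2\theta D}$ (this is exactly where $\theta<\sqrt\sigma/16$ enters). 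In particular no orthogonality conditions, elliptic equations, or modulation estimates \eqref{2mod1}--\eqref{2mod2} are needed, and all the $\widetilde P$--$\varepsilon$ cross terms you worry about for $F_j$ simply never appear; your point (ii), that the linear-in-$\varepsilon$ terms vanish by orthogonality, is a red herring in any case, since once you localize with $\Phi_{jx}$ the global orthogonality relations are of no use --- the correct (and sufficient) mechanism is the one you also mention, namely that $\widetilde P\,\Phi_{jx}$ is pointwise exponentially small by the separation estimates \eqref{2eq:-22}, \eqref{2e:-22-2}. Two slips to fix: the displayed ``leading quadratic part'' $\tfrac12 m_j'\int\varepsilon^2\Psi'$ has the wrong sign (the transport contribution is $-\tfrac12 m_j'\int u^2\Phi_{jx}\le0$, consistent with your own prose; as literally written, a positive term $\tfrac12 m_j'\int\varepsilon^2\Psi'$ could not be absorbed by $-\tfrac32\int\varepsilon_x^2\Psi'$ and the mass estimate would fail), and the case $j=1$ must be treated separately ($\Phi_1\equiv1$, so the three quantities are exactly conserved, which is also what Remark \ref{2rem:mono} records for later use).
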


\begin{rem}
\label{2rem:mono}If $j=1$, we have $=0$ at the place of $\leq Ce^{-2\theta D}$,
we will need it in the following of the proof. 
\end{rem}

\begin{proof}[Proof]
If $j=1$, we have exact conservation laws, so this Lemma is obvious.
We assume that $j\geq2$ for the following of this proof. From Appendix
and minoration of $m_{j}'$, 
\begin{align}
\frac{d}{dt}M_{j}(t) & =\int\left[-\frac{3}{2}u_{x}^{2}+\frac{3}{4}u^{4}\right]\Phi_{jx}+\frac{1}{2}\int u^{2}\Phi_{jxxx}+\frac{1}{2}\int u^{2}\Phi_{jt}\nonumber \\
 & =\int\left[-\frac{3}{2}u_{x}^{2}+\frac{3}{4}u^{4}\right]\Phi_{jx}+\frac{1}{2}\int u^{2}\Phi_{jxxx}-\frac{1}{2}m_{j}'\int u^{2}\Phi_{jx}\nonumber \\
 & \leq\int\left[-\frac{3}{2}u_{x}^{2}+\frac{3}{4}u^{4}-\frac{1}{2}\sigma u^{2}\right]\Phi_{jx}+\frac{1}{2}\int u^{2}\Phi_{jxxx}.\label{2eq:-19}
\end{align}

Now, we use that $\left|\Phi_{jxxx}\right|\leq\frac{\sigma}{4}\Phi_{jx}$,
and we obtain that 
\begin{equation}
2\frac{d}{dt}M_{j}(t)\leq-\int\left[-3u_{x}^{2}+\frac{3\sigma}{4}u^{2}-\frac{3}{2}u^{4}\right]\Phi_{jx}.
\end{equation}

Now, from Appendix, we know that for $r>0$, if $t,x$ satisfy $\widetilde{x_{j-1}}(t)+r<x<\widetilde{x_{j}}(t)-r$,
then $\left|\widetilde{P}(t,x)\right|\leq Ce^{-\beta r}$. And so,
for $t,x$ such that $\widetilde{x_{j-1}}(t)+r<x<\widetilde{x_{j}}(t)-r$,
by Sobolev embedding, 
\begin{align}
\left|u(t,x)\right| & \leq\left|\widetilde{P}(t,x)\right|+C\Vert\varepsilon(t)\Vert_{H^{2}}\nonumber \\
 & \leq Ce^{-\beta r}+CA\left(a+e^{-\theta D}\right).\label{2eq:-18}
\end{align}

From that, we can deduce that for $r$ large enough, $a_{0}$ small
enough and $D_{0}$ large enough, for $x\in[\widetilde{x_{j-1}}(t)+r,\widetilde{x_{j}}(t)-r]$,
we can obtain that $\vert u(t,x)\vert$ is bounded by any chosen constant.
Here, we will use that to bound $\frac{3}{2}u^{2}$ by $\frac{\sigma}{4}$.

For $t,x$ such that $x<\widetilde{x_{j-1}}(t)+r$ or $x>\widetilde{x_{j}}(t)-r$:
\begin{align}
\left|\Phi_{jx}(t,x)\right| & \leq C\exp\left(-\frac{\sqrt{\sigma}}{2}\left|x-m_{j}(t)\right|\right)\nonumber \\
 & \leq C\exp\left(-\frac{\sqrt{\sigma}}{2}\left(\frac{D}{2}+\zeta t-r\right)\right)\nonumber \\
 & \leq C\exp\left(-\frac{\sqrt{\sigma}\zeta}{2}t\right)\exp\left(-\frac{\sqrt{\sigma}}{4}D\right)\exp\left(\frac{\sqrt{\sigma}}{2}r\right),\label{2eq:-17}
\end{align}
and so, if we choose $D_{0}$ large enough (more precisely, $D_{0}\geq4r$),
we obtain for $x\notin[\widetilde{x_{j-1}}(t)+r,\widetilde{x_{j}}(t)-r]$:
\begin{equation}
\left|\Phi_{jx}(t,x)\right|\leq C\exp\left(-\frac{\sqrt{\sigma}\zeta}{2}t\right)\exp\left(-\frac{\sqrt{\sigma}}{8}D\right).
\end{equation}

Because $\int u^{4}$ is bounded by a constant for any time (that
depends only on problem data), we deduce that: 
\begin{align}
\frac{d}{dt}M_{j}(t) & \leq-\int\left(\frac{3}{2}u_{x}^{2}+\frac{\sigma}{4}u^{2}\right)\Phi_{jx}(t)+C\exp\left(-\frac{\sqrt{\sigma}\zeta}{2}t\right)\exp\left(-\frac{\sqrt{\sigma}}{8}D\right)\nonumber \\
 & \leq C\exp\left(-\frac{\sqrt{\sigma}\zeta}{2}t\right)\exp\left(-\frac{\sqrt{\sigma}}{8}D\right).\label{2eq:-16}
\end{align}

We deduce what we want to prove by integration.

For the second inequality, the argument is similar. We start by
using Appendix: 
\begin{align}
\frac{d}{dt}E_{j}(t) & =\int\left[-\frac{1}{2}\left(u_{xx}+u^{3}\right)^{2}-u_{xx}^{2}+3u^{2}u_{x}^{2}\right]\Phi_{jx}+\frac{1}{2}\int u_{x}^{2}\Phi_{jxxx}-m_{j}'\int\left[\frac{1}{2}u_{x}^{2}-\frac{1}{4}u^{4}\right]\Phi_{jx}\nonumber \\
 & \leq-\int\left[\frac{1}{2}\left(u_{xx}+u^{3}\right)^{2}+u_{xx}^{2}-3u^{2}u_{x}^{2}+\frac{\sigma}{2}u_{x}^{2}-\frac{v_{J}}{2}u^{4}\right]\Phi_{jx}+\frac{1}{2}\int u_{x}^{2}\Phi_{jxxx},\label{2eq:-15}
\end{align}
and again, because $\left|\Phi_{jxxx}\right|\leq\frac{\sigma}{4}\Phi_{jx}$,
we obtain that 
\begin{equation}
2\frac{d}{dt}E_{j}(t)\leq-\int\left[\left(u_{xx}+u^{3}\right)^{2}+2u_{xx}^{2}-6u^{2}u_{x}^{2}+\frac{3\sigma}{4}u_{x}^{2}-v_{J}u^{4}\right]\Phi_{jx},
\end{equation}
and by doing a similar argument as for the mass, but to bound $6u^{2}$
by $\frac{\sigma}{4}$ and to bound $v_{J}u^{2}$ by $\omega_{3}>0$,
a constant as small as we need, we obtain 
\begin{align}
\frac{d}{dt}E_{j}(t) & \leq-\int\left[\frac{1}{2}\left(u_{xx}+u^{3}\right)^{2}+u_{xx}^{2}+\frac{\sigma}{4}u_{x}^{2}-\frac{\omega_{3}}{2}u^{2}\right]\Phi_{jx}+C\exp\left(-\frac{\sqrt{\sigma}\zeta}{2}t\right)\exp\left(-\frac{\sqrt{\sigma}}{8}D\right)\nonumber \\
 & \leq\frac{\omega_{3}}{2}\int u^{2}\Phi_{jx}+C\exp\left(-\frac{\sqrt{\sigma}\zeta}{2}t\right)\exp\left(-\frac{\sqrt{\sigma}}{8}D\right),\label{2eq:-14}
\end{align}
and so, if we choose $\omega_{3}$ such that $\frac{\omega_{3}}{2}\leq\omega_{1}\frac{\sigma}{4}$,
\begin{align}
\frac{d}{dt}\left(E_{j}(t)+\omega_{1}M_{j}(t)\right) & \leq\frac{\omega_{3}}{2}\int u^{2}\Phi_{jx}+C\exp\left(-\frac{\sqrt{\sigma}\zeta}{2}t\right)\exp\left(-\frac{\sqrt{\sigma}}{8}D\right)-\frac{\omega_{1}\sigma}{4}\int u^{2}\Phi_{jx}\nonumber \\
 & \leq C\exp\left(-\frac{\sqrt{\sigma}\zeta}{2}t\right)\exp\left(-\frac{\sqrt{\sigma}}{8}D\right),\label{2eq:-13}
\end{align}
and we deduce what we want to prove by integration.

For the third inequality, the argument is similar. We start by using
Appendix: 
\begin{align}
\frac{d}{dt}F_{j}(t) & =\int\left(-\frac{3}{2}u_{xxx}^{2}+9u_{xx}^{2}u^{2}+15u_{x}^{2}uu_{xx}+\frac{9}{16}u^{8}+\frac{1}{4}u_{x}^{4}+\frac{3}{2}u_{xx}u^{5}-\frac{45}{4}u^{4}u_{x}^{2}\right)\Phi_{jx}\nonumber \\
 & +5\int u^{2}u_{x}u_{xx}\Phi_{jxx}+\frac{1}{2}\int u_{xx}^{2}\Phi_{jxxx}-m_{j}'\int\left[\frac{1}{2}u_{xx}^{2}-\frac{5}{2}u^{2}u_{x}^{2}+\frac{1}{4}u^{6}\right]\Phi_{jx}\nonumber \\
 & \leq-\int\left(\frac{3}{2}u_{xxx}^{2}-9u_{xx}^{2}u^{2}-15u_{x}^{2}uu_{xx}-\frac{9}{16}u^{8}-\frac{1}{4}u_{x}^{4}-\frac{3}{2}u_{xx}u^{5}+\frac{45}{4}u^{4}u_{x}^{2}+\frac{\sigma}{2}u_{xx}^{2}\right.   \nonumber \\
 & \left. -5v_{J}u^{2}u_{x}^{2}+\frac{\sigma}{4}u^{6}\right)\Phi_{jx} +5\int u^{2}u_{x}u_{xx}\Phi_{jxx}+\frac{1}{2}\int u_{xx}^{2}\Phi_{jxxx},\label{2eq:-12}
\end{align}
and again, because $\left|\Phi_{jxx}\right|\leq\frac{\sqrt{\sigma}}{2}\Phi_{jx}$
and $\left|\Phi_{jxxx}\right|\leq\frac{\sigma}{4}\Phi_{jx}$, we obtain
that 
\begin{align}
2\frac{d}{dt}F_{j}(t) & \leq-\int\left(3u_{xxx}^{2}-\left(18+\frac{5}{2}\sqrt{\sigma}\right)u_{xx}^{2}u^{2}-30u_{x}^{2}uu_{xx}-\frac{9}{8}u^{8}-\frac{1}{2}u_{x}^{4}-3u_{xx}u^{5}\right.\nonumber \\
 & \left.+\frac{45}{2}u^{4}u_{x}^{2}+\frac{3\sigma}{4}u_{xx}^{2}-\left(10v_{J}+\frac{5}{2}\sqrt{\sigma}\right)u^{2}u_{x}^{2}+\frac{\sigma}{2}u^{6}\right)\Phi_{jx},\label{2eq:-11}
\end{align}
and by doing a similar argument as for the mass, but to bound $\left(18+\frac{5}{2}\sqrt{\sigma}\right)u^{2}$
by $\frac{\sigma}{4}$, to bound $30uu_{xx}$ by $\omega_{4}$, to
bound $\frac{9}{8}u^{6}$ by $\omega_{5}$, to bound $\frac{1}{2}u_{x}^{2}$
by $\omega_{4}$, to bound $3u_{xx}u^{3}$ by $\omega_{5}$ and to
bound $\left(10v_{J}+\frac{5}{2}\sqrt{\sigma}\right)u^{2}$ by $\omega_{4}$,
where $\omega_{3},\omega_{4}>0$ are constants that we can choose
as small as we want. And we obtain 
\begin{align}
\frac{d}{dt}F_{j}(t) & \leq-\int\left[\frac{3}{2}u_{xxx}^{2}+\frac{45}{4}u^{4}u_{x}^{2}+\frac{\sigma}{4}u^{6}+\frac{\sigma}{4}u_{xx}^{2}-C\omega_{4}u_{x}^{2}-C\omega_{5}u^{2}\right]\Phi_{jx}\nonumber\\
& +C\exp\left(-\frac{\sqrt{\sigma}\zeta}{2}t\right)\exp\left(-\frac{\sqrt{\sigma}}{8}D\right)\nonumber \\
 & \leq C\omega_{4}\int u_{x}^{2}\Phi_{jx}+C\omega_{5}\int u^{2}\Phi_{jx}+C\exp\left(-\frac{\sqrt{\sigma}\zeta}{2}t\right)\exp\left(-\frac{\sqrt{\sigma}}{8}D\right),\label{2eq:-10}
\end{align}
and so, if we choose $\omega_{4},\omega_{5}$ such that $C\omega_{4}\leq\omega_{1}\frac{3}{2}$
and $C\omega_{5}\leq\omega_{1}\frac{\sigma}{4}$, 
\begin{equation}
\frac{d}{dt}\left(F_{j}(t)+\omega_{2}M_{j}(t)\right)\leq C\exp\left(-\frac{\sqrt{\sigma}\zeta}{2}t\right)\exp\left(-\frac{\sqrt{\sigma}}{8}D\right),
\end{equation}
and we deduce what we want to prove by integration. 
\end{proof}

\subsection{Quadratic approximation for conservation laws at the right}

We can write the following Taylor expansions with $u=\widetilde{P}+\varepsilon$
for any $t\in [0,t^*]$:

\begin{equation}
M_{j}(t)-M_{j}[\widetilde{P}](t)-\int\widetilde{P}\varepsilon\Phi_{j}-\frac{1}{2}\int\varepsilon^{2}\Phi_{j}=0,\label{2mass-quadr}
\end{equation}
\begin{equation}
\left|E_{j}(t)-E_{j}[\widetilde{P}](t)-\int\left[\widetilde{P}_{x}\varepsilon_{x}-\widetilde{P}^{3}\varepsilon\right]\Phi_{j}-\int\left[\frac{1}{2}\varepsilon_{x}^{2}-\frac{3}{2}\widetilde{P}^{2}\varepsilon^{2}\right]\Phi_{j}\right|\leq CA\left(a+e^{-\theta D}\right)\int\varepsilon^{2}\Phi_{j},\label{2energ-quadr}
\end{equation}
\begin{align}
\left|F_{j}(t)-F_{j}[\widetilde{P}](t)-\int\left[\widetilde{P}_{xx}\varepsilon_{xx}-5\widetilde{P}\widetilde{P}_{x}^{2}\varepsilon-5\widetilde{P}^{2}\widetilde{P}_{x}\varepsilon_{x}+\frac{3}{2}\widetilde{P}^{5}\varepsilon\right]\Phi_{j}\right.\label{2f-quadr}\\
\left.-\int\left[\frac{1}{2}\varepsilon_{xx}^{2}-\frac{5}{2}\widetilde{P}_{x}^{2}\varepsilon^{2}-10\widetilde{P}\widetilde{P}_{x}\varepsilon\varepsilon_{x}-\frac{5}{2}\widetilde{P}^{2}\varepsilon_{x}^{2}+\frac{15}{4}\widetilde{P}^{4}\varepsilon^{2}\right]\Phi_{j}\right| & \leq CA\left(a+e^{-\theta D}\right)\int\left(\varepsilon^{2}+\varepsilon_{x}^{2}\right)\Phi_{j}.\nonumber 
\end{align}

Now, we want to simplify each term of the Taylor expansion.

\subsubsection{Constant terms of the Taylor expansion}

We obtain the following lemma dealing with variations of the constant
parts of each Taylor expansion of conservation laws at the right.
We reduce each variation to the variation of each conservation law
of $\widetilde{P_{j}}$. Note that if $P_{j}$ is a breather, the
variation of any conservation law of $\widetilde{P_{j}}$ is $0$.
But, if $P_{j}=R_{l}$, we may express in the following way $M[\widetilde{R_{l}}]$,
$E[\widetilde{R_{l}}]$ and $F[\widetilde{R_{l}}]$ with respect to
$Q$, the ground state of parameter $c=1$ (the basic ground state):

\begin{equation}
M[\widetilde{R_{l}}](t)=c_{l}(t)^{1/2}M[Q],\label{2solit-mass}
\end{equation}
\begin{equation}
E[\widetilde{R_{l}}](t)=c_{l}(t)^{3/2}E[Q],\label{2solit-energ}
\end{equation}
\begin{equation}
F[\widetilde{R_{l}}](t)=c_{l}(t)^{5/2}F[Q].
\end{equation}

\begin{lem}
\label{2lem:const}If $\sigma<4\beta^{2},\theta<\min\left(\frac{\beta}{4},\frac{\sqrt{\sigma}}{8}\right)$,
then for any $t\in [0,t^*]$, 
\begin{equation}
\left|M_{j}[\widetilde{P}](t)-M_{j}[\widetilde{P}](0)-\left(M[\widetilde{P_{j}}](t)-M[\widetilde{P_{j}}](0)\right)\right|\leq Ce^{-2\theta D}+C\left(\frac{A}{Z_{j+1}}\right)^{2}\left(a^{2}+e^{-2\theta D}\right),\label{2var-mass}
\end{equation}
\begin{equation}
\left|E_{j}[\widetilde{P}](t)-E_{j}[\widetilde{P}](0)-\left(E[\widetilde{P_{j}}](t)-E[\widetilde{P_{j}}](0)\right)\right|\leq Ce^{-2\theta D}+C\left(\frac{A}{Z_{j+1}}\right)^{2}\left(a^{2}+e^{-2\theta D}\right),\label{2var-energ}
\end{equation}
\begin{equation}
\left|F_{j}[\widetilde{P}](t)-F_{j}[\widetilde{P}](0)-\left(F[\widetilde{P_{j}}](t)-F[\widetilde{P_{j}}](0)\right)\right|\leq Ce^{-2\theta D}+C\left(\frac{A}{Z_{j+1}}\right)^{2}\left(a^{2}+e^{-2\theta D}\right).
\end{equation}
\end{lem}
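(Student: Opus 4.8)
The plan is to expand $\widetilde{P} = \sum_{j'=1}^J \widetilde{P_{j'}}$ inside each of the cut-off functionals $M_j[\widetilde{P}]$, $E_j[\widetilde{P}]$, $F_j[\widetilde{P}]$ and isolate the contribution of the single object $\widetilde{P_j}$, controlling everything else by exponential smallness coming either from the spatial decoupling of the objects or from the induction hypothesis $\mathcal{P}_{j+1}$. Recall that $\Phi_j(t,x) = \Psi(x-m_j(t))$ with $\Psi$ going from $0$ at $-\infty$ to $1$ at $+\infty$, and that $m_j(t)$ sits strictly between $\widetilde{x_{j-1}}(t)$ and $\widetilde{x_j}(t)$, with $\widetilde{x_j}(t) - m_j(t) \geq \frac D2 + \zeta t$ and $m_j(t) - \widetilde{x_{j-1}}(t) \geq \frac D2 + \zeta t$ by \eqref{2eq:-22}--\eqref{2e:-22-2}. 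So I would split the objects into three groups: the ``fast'' ones $P_{j'}$ with $j' \geq j+1$, which are supported (up to exponential tails) to the right of $m_j$ where $\Phi_j \approx 1$; the ``slow'' ones $P_{j'}$ with $j' \leq j-1$, supported to the left of $m_j$ where $\Phi_j \approx 0$; and the critical object $P_j$ itself, which is $O(e^{-\beta D/2})$-close to $m_j$'s left AND which carries the main term.

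First I would write, for the mass, $M_j[\widetilde P](t) = \frac12\int\bigl(\sum_{j'}\widetilde{P_{j'}}\bigr)^2\Phi_j = \sum_{j'}\frac12\int\widetilde{P_{j'}}^2\Phi_j + \sum_{j'\neq j''}\frac12\int\widetilde{P_{j'}}\widetilde{P_{j''}}\Phi_j$. The cross terms $\int\widetilde{P_{j'}}\widetilde{P_{j''}}\Phi_j$ are bounded by $Ce^{-\beta D}e^{-\beta\zeta t}$ using the exponential decay of the objects and the fact that any two distinct objects are separated by at least $D + \frac\tau2 t$ (this is \eqref{2cons5}, and is the same computation as in the Appendix bound $|\widetilde P(t,x)|\leq Ce^{-\beta r}$ used in Lemma \ref{2lem:mono}); hence each such term and its $t=0$ value are both $O(e^{-\beta D})$, and since $\theta < \beta/4$ this contributes $\leq Ce^{-2\theta D}$ to the left-hand side. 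For the diagonal terms: if $j' \leq j-1$, then on the support of $\widetilde{P_{j'}}$ (a neighbourhood of $\widetilde{x_{j'}}(t) \leq \widetilde{x_{j-1}}(t)$) we have $\Phi_j(t,x) = \Psi(x-m_j(t)) \leq Ce^{-\frac{\sqrt\sigma}{2}(m_j(t)-x)} \leq Ce^{-\frac{\sqrt\sigma}{4}D}e^{-\frac{\sqrt\sigma\zeta}{2}t}$ on the bulk, with the tail of $\widetilde{P_{j'}}$ handled as in \eqref{2eq:-17}; since $\theta < \frac{\sqrt\sigma}{8}$ this is again $O(e^{-2\theta D})$ at all times including $t=0$. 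If $j' \geq j+1$, I write $\Phi_j = 1 - \Psi(m_j(t)-x) = 1 - (1-\Phi_j)$, and $1-\Phi_j(t,x) \leq Ce^{-\frac{\sqrt\sigma}{2}(x-m_j(t))} \leq Ce^{-\frac{\sqrt\sigma}{4}D}e^{-\frac{\sqrt\sigma\zeta}{2}t}$ on the support of $\widetilde{P_{j'}}$; so $\int\widetilde{P_{j'}}^2\Phi_j = \int\widetilde{P_{j'}}^2 - \int\widetilde{P_{j'}}^2(1-\Phi_j) = 2M[\widetilde{P_{j'}}] + O(e^{-2\theta D})$, and $M[\widetilde{P_{j'}}](t) = M[\widetilde{P_{j'}}](0)$ if $P_{j'}$ is a breather (mass is conserved and the breather profile is rigid), while if $P_{j'} = R_l$ is a soliton $M[\widetilde{R_l}](t) = c_l(t)^{1/2}M[Q]$ by \eqref{2solit-mass}, so $|M[\widetilde{R_l}](t) - M[\widetilde{R_l}](0)| \leq C|c_l(t)-c_l(0)| \leq C(A/Z_{j+1})^2(a^2+e^{-2\theta D})$ by the induction hypothesis \eqref{2velo-assump}. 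That leaves the $j'=j$ diagonal term, for which the same expansion gives $\frac12\int\widetilde{P_j}^2\Phi_j = M[\widetilde{P_j}] + O(e^{-2\theta D})$ — here I would note that $\widetilde{P_j}$ is centered at $\widetilde{x_j}(t) \geq m_j(t) + \frac D2 + \zeta t$, so it sits on the $\Phi_j\approx 1$ side, and $1-\Phi_j$ is exponentially small on its support exactly as for the fast objects — and combining everything gives $|M_j[\widetilde P](t) - M_j[\widetilde P](0) - (M[\widetilde{P_j}](t) - M[\widetilde{P_j}](0))| \leq Ce^{-2\theta D} + C(A/Z_{j+1})^2(a^2+e^{-2\theta D})$, which is \eqref{2var-mass}.

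The energy and $F$ functionals are handled by exactly the same three-way split. The only new point is bookkeeping: $E_j[\widetilde P] = \int[\frac12\widetilde P_x^2 - \frac14\widetilde P^4]\Phi_j$ and expanding $\widetilde P = \sum\widetilde{P_{j'}}$ produces, besides the pure squares/fourth powers of individual objects, cross terms like $\int\widetilde{P_{j'}}_x\widetilde{P_{j''}}_x\Phi_j$ and $\int\widetilde{P_{j'}}\widetilde{P_{j''}}\widetilde{P_{j'''}}\widetilde{P_{j''''}}\Phi_j$ with not all indices equal; each such mixed term contains a product of at least two objects centered at distinct points, hence is $O(e^{-\beta D}e^{-\beta\zeta t}) = O(e^{-2\theta D})$ by decay plus \eqref{2cons5} (the derivatives decay at the same rate by the stated analogous bounds), and likewise at $t=0$. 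The diagonal terms reduce, via the $\Phi_j = 1 - (1-\Phi_j)$ (for $j'\geq j$) or $\Phi_j \leq$ exponentially small (for $j'\leq j-1$) dichotomy, to $E[\widetilde{P_{j'}}]$ up to $O(e^{-2\theta D})$, and $E[\widetilde{P_{j'}}]$ is conserved (breather) or equals $c_l(t)^{3/2}E[Q]$ (soliton, \eqref{2solit-energ}) so its variation is $\leq C|c_l(t)-c_l(0)| \leq C(A/Z_{j+1})^2(a^2+e^{-2\theta D})$ via \eqref{2velo-assump} — except the $j'=j$ term which is precisely $E[\widetilde{P_j}]$ modulo $O(e^{-2\theta D})$. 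The same scheme with $F_j[\widetilde P] = \int[\frac12\widetilde P_{xx}^2 - \frac52\widetilde P^2\widetilde P_x^2 + \frac14\widetilde P^6]\Phi_j$ and \eqref{2solit-energ}'s analogue for $F[\widetilde{R_l}] = c_l(t)^{5/2}F[Q]$ gives the third estimate.

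The main obstacle — really the only thing requiring care rather than routine bookkeeping — is making the separation-of-supports estimates fully rigorous: an object $\widetilde{P_{j'}}$ is not compactly supported, only exponentially localized, so ``on the support of $\widetilde{P_{j'}}$'' has to be replaced by the standard split into a core region $|x - \widetilde{x_{j'}}(t)| \leq r$ (where $\Phi_j$ or $1-\Phi_j$ is $\leq Ce^{-\frac{\sqrt\sigma}{4}D + \frac{\sqrt\sigma}{2}r}e^{-\frac{\sqrt\sigma\zeta}{2}t}$, small once $D_0 \geq 4r$) and a far region $|x - \widetilde{x_{j'}}(t)| > r$ (where $\widetilde{P_{j'}}$ and all its relevant derivatives are $\leq Ce^{-\beta r}$, so the full integral against the bounded remaining weight is $\leq Ce^{-\beta r}$), then $r$ is sent to a fixed large value so that both pieces are $\leq Ce^{-2\theta D}$ — this is exactly the mechanism already used in \eqref{2eq:-17}--\eqref{2eq:-16} and I would simply invoke that reasoning. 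One must also check the compatibility of the thresholds: the hypotheses $\sigma < 4\beta^2$ (equivalently $\frac{\sqrt\sigma}{2} < \beta$, needed so that the Gaussian-type competition between $e^{-\beta r}$ and $e^{\frac{\sqrt\sigma}{2}r}$ resolves correctly when optimizing $r$) and $\theta < \min(\frac\beta4, \frac{\sqrt\sigma}{8})$ (needed so that both the $e^{-\beta D}$ decoupling bound and the $e^{-\frac{\sqrt\sigma}{4}D}$ cut-off bound beat $e^{-2\theta D}$) are precisely what make all the error terms collapse into the stated right-hand side.
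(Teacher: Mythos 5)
Your proposal is correct and follows essentially the same route as the paper: expand $\widetilde{P}=\sum_{i}\widetilde{P_{i}}$, kill the cross terms by the exponential separation of the objects, kill the diagonal terms with $i<j$ by the decay of $\Phi_{j}$ to the left of $m_{j}$, replace $\Phi_{j}$ by $1$ up to $O(e^{-\sqrt{\sigma}D/4})$ for $i\geq j$, and control the time variation of the resulting full conservation laws for $i\geq j+1$ by exact conservation (breathers) or by the induction hypothesis \eqref{2velo-assump} together with \eqref{2solit-mass}--\eqref{2solit-energ} (solitons), leaving the $i=j$ term as the main contribution. The only cosmetic difference is that the paper bounds $\int\widetilde{P_{i}}^{2}\Phi_{j}$ for $i<j$ by an explicit integral computation rather than your core/tail split, but both yield the same $e^{-\sqrt{\sigma}D/4}$ bound under the stated constraints on $\sigma$ and $\theta$.
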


\begin{rem}
\label{2rem:const}In the case when $P_{j}$ is a breather, we have
that
\begin{equation}
M[\widetilde{P_{j}}](t)-M[\widetilde{P_{j}}](0)=E[\widetilde{P_{j}}](t)-E[\widetilde{P_{j}}](0)=F[\widetilde{P_{j}}](t)-F[\widetilde{P_{j}}](0)=0.\label{2eq:-9}
\end{equation}

It is not true in the case when $P_{j}$ is a soliton.
\end{rem}

\begin{proof}[Proof]
When we develop using $\widetilde{P}=\sum_{i=1}^{J}\widetilde{P_{i}}$,
we obtain terms with $\widetilde{P_{i}}\widetilde{P_{j}}$ with $i\neq j$
and the other terms that have all the same index. For the first type
of terms, it is enough to bound $\int\widetilde{P_{i}}\widetilde{P_{j}}$
for $i\neq j$: 
\begin{equation}
\left|\int\widetilde{P_{i}}\widetilde{P_{j}}\right|\leq Ce^{-\frac{\beta D}{2}}.
\end{equation}

Now, we look on the terms with the same index, for example $\int\widetilde{P_{i}}^{2}\Phi_{j}$.
We will distinguish several cases. If $i<j$, 
\begin{align}
\int\widetilde{P_{i}}^{2}\Phi_{j} & \leq C\int e^{-2\beta\left|x-\widetilde{x_{i}}(t)\right|}e^{\frac{\sqrt{\sigma}}{2}\left(x-m_{j}(t)\right)}dx\nonumber \\
 & =C\int_{-\infty}^{\widetilde{x_{i}}(t)}e^{\left(2\beta+\sqrt{\sigma}/2\right)x-2\beta\widetilde{x_{i}}(t)-\frac{\sqrt{\sigma}}{2}m_{j}(t)}dx+C\int_{\widetilde{x_{i}}(t)}^{+\infty}e^{\left(-2\beta+\sqrt{\sigma}/2\right)x+2\beta\widetilde{x_{i}}(t)-\frac{\sqrt{\sigma}}{2}m_{j}(t)}dx\nonumber \\
 & \leq\frac{C}{\sqrt{\sigma}}e^{\frac{\sqrt{\sigma}}{2}\left(\widetilde{x_{i}}(t)-m_{j}(t)\right)}\nonumber \\
 & \leq Ce^{-\frac{\sqrt{\sigma}D}{4}}.\label{2eq:-8}
\end{align}

For the same reason and properties of $\Psi$, for $i\geq j$, 
\begin{equation}
\int\widetilde{P_{i}}^{2}\left(1-\Phi_{j}\right)\leq Ce^{-\frac{\sqrt{\sigma}D}{4}}.
\end{equation}

For $i>j$, we may use $\mathcal{P}_{i}$; and for $i=j$, we cannot.
So, if for $i\geq j+1$, $\widetilde{P_{i}}=\widetilde{R_{l}}$ is
a soliton, we have by the mean-value theorem, using (\ref{2velo-assump}),
\begin{align}
\left|M[\widetilde{P_{i}}](t)-M[\widetilde{P_{i}}](0)\right| & =\left|c_{l}(t)^{1/2}-c_{l}(0)^{1/2}\right|\left|M[Q]\right|\nonumber \\
 & \leq C\left|c_{l}(t)-c_{l}(0)\right|\nonumber \\
 & \leq C\left(\frac{A}{Z_{j+1}}\right)^{2}\left(a^{2}+e^{-2\theta D}\right),\label{2eq:-7}
\end{align}
and, by the same way, we may obtain, 
\begin{equation}
\left|E[\widetilde{P_{i}}](t)-E[\widetilde{P_{i}}](0)\right|\leq C\left(\frac{A}{Z_{j+1}}\right)^{2}\left(a^{2}+e^{-2\theta D}\right),
\end{equation}
and 
\begin{equation}
\left|F[\widetilde{P_{i}}](t)-F[\widetilde{P_{i}}](0)\right|\leq C\left(\frac{A}{Z_{j+1}}\right)^{2}\left(a^{2}+e^{-2\theta D}\right).
\end{equation}
\end{proof}

\subsubsection{Linear terms of the Taylor expansion}

We denote: 
\begin{equation}
m[X]:=\int X\varepsilon,
\end{equation}
\begin{equation}
m_{j}[X]:=\int X\varepsilon\Phi_{j},
\end{equation}
\begin{equation}
e[X]:=\int\left[X_{x}\varepsilon_{x}-X^{3}\varepsilon\right]=-\int\left[X_{xx}+X^{3}\right]\varepsilon,
\end{equation}
\begin{equation}
e_{j}[X]:=\int\left[X_{x}\varepsilon_{x}-X^{3}\varepsilon\right]\Phi_{j},
\end{equation}
\begin{equation}
f[X]:=\int\left[X_{xx}\varepsilon_{xx}-5XX_{x}^{2}\varepsilon-5X^{2}X_{x}\varepsilon_{x}+\frac{3}{2}X^{5}\varepsilon\right]=\int\left[X_{xxxx}+5XX_{x}^{2}+5X^{2}X_{xx}+\frac{3}{2}X^{5}\right]\varepsilon,
\end{equation}
\begin{equation}
f_{j}[X]:=\int\left[X_{xx}\varepsilon_{xx}-5XX_{x}^{2}\varepsilon-5X^{2}X_{x}\varepsilon_{x}+\frac{3}{2}X^{5}\varepsilon\right]\Phi_{j}.
\end{equation}

\begin{lem}
\label{2lem:lin}If $\sigma<\beta^{2},\theta<\min\left(\frac{\beta}{4},\frac{\sqrt{\sigma}}{8}\right)$,
if $D_{0}$ is large enough and $a_{0}$ is small enough, then for
any $t\in [0,t^*]$, 
\begin{equation}
\left|m_{j}[\widetilde{P}](t)-m_{j}[\widetilde{P}](0)-\left(m[\widetilde{P_{j}}](t)-m[\widetilde{P_{j}}](0)\right)\right|\leq Ce^{-2\theta D}+C\left(\frac{A}{Z_{j+1}}\right)^{2}\left(a^{2}+e^{-2\theta D}\right),\label{2varlinmass}
\end{equation}
\begin{equation}
\left|e_{j}[\widetilde{P}](t)-e_{j}[\widetilde{P}](0)-\left(e[\widetilde{P_{j}}](t)-e[\widetilde{P_{j}}](0)\right)\right|\leq Ce^{-2\theta D}+C\left(\frac{A}{Z_{j+1}}\right)^{2}\left(a^{2}+e^{-2\theta D}\right),\label{2varlinenerg}
\end{equation}
\begin{equation}
\left|f_{j}[\widetilde{P}](t)-f_{j}[\widetilde{P}](0)-\left(f[\widetilde{P_{j}}](t)-f[\widetilde{P_{j}}](0)\right)\right|\leq Ce^{-2\theta D}+C\left(\frac{A}{Z_{j+1}}\right)^{2}\left(a^{2}+e^{-2\theta D}\right).
\end{equation}
\end{lem}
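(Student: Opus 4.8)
The plan is to follow the strategy of the proof of Lemma \ref{2lem:const}. Writing $\widetilde P=\sum_{i=1}^J\widetilde{P_i}$ and expanding each of $m_j[\widetilde P]$, $e_j[\widetilde P]$, $f_j[\widetilde P]$, one gets a sum over $i$ of ``diagonal'' terms built only from $\widetilde{P_i}$, plus (for the nonlinear functionals $e_j$, $f_j$) ``cross'' terms involving a product $\widetilde{P_i}^a\widetilde{P_{i'}}^b$ with $i\neq i'$. Every cross term is pointwise bounded by $Ce^{-\beta D/2}$ times an integrable localized weight, because distinct centres are separated by at least $D+\frac\tau2 t$ (see \eqref{2cons5}); so after one Cauchy--Schwarz against $\varepsilon$ and the bootstrap bound $\Vert\varepsilon(t)\Vert_{H^2}\le A(a+e^{-\theta D})$, each cross term is $\le Ce^{-2\theta D}$ as soon as $\theta<\beta/4$ --- this is the analogue of the estimate $|\int\widetilde{P_i}\widetilde{P_j}|\le Ce^{-\beta D/2}$ used in Lemma \ref{2lem:const}. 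It then remains to treat the diagonal contribution of each $\widetilde{P_i}$, distinguishing $i<j$, $i=j$ and $i>j$.

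For the linear functional this is direct: $m_j[\widetilde P]=\sum_i\int\widetilde{P_i}\varepsilon\,\Phi_j$. If $i<j$, then $\Phi_j$ is exponentially small on the support of $\widetilde{P_i}$ since $\widetilde{x_i}\le\widetilde{x_{j-1}}$ and $m_j-\widetilde{x_{j-1}}\ge\frac D2+\zeta t$ by \eqref{2e:-22-2}; computing $\Vert\widetilde{P_i}\Phi_j\Vert_{L^2}\le Ce^{-\sqrt\sigma D/4}$ exactly as in \eqref{2eq:-8} and using Cauchy--Schwarz, this term is $\le Ce^{-2\theta D}$ because $\sigma<\beta^2$ and $\theta<\sqrt\sigma/8$. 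If $i>j$, we split $\int\widetilde{P_i}\varepsilon\,\Phi_j=\int\widetilde{P_i}\varepsilon-\int\widetilde{P_i}\varepsilon(1-\Phi_j)$, the last integral being $\le Ce^{-2\theta D}$ by the same argument (now $1-\Phi_j$ is small where $\widetilde{P_i}$ concentrates, using $\widetilde{x_i}-m_j\ge\frac D2+\zeta t$ from \eqref{2eq:-22}), while $\int\widetilde{P_i}\varepsilon$ either vanishes identically when $P_i=R_l$ is a soliton (orthogonality \eqref{2orth-cond}) or has time variation $\le(A/Z_{j+1})^2(a^2+e^{-2\theta D})$ when $P_i$ is a breather (induction hypothesis \eqref{2br-m-assump}). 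If $i=j$, the same splitting gives $m_j[\widetilde{P_j}]=m[\widetilde{P_j}]+O(e^{-2\theta D})$. Adding the finitely many contributions yields \eqref{2varlinmass}.

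For $e_j$ and $f_j$ one extra step is needed: in the diagonal terms, integrate by parts so as to move all derivatives onto $\widetilde{P_i}$. This rewrites $e_j[\widetilde{P_i}]$ as $-\int(\widetilde{P_i}_{xx}+\widetilde{P_i}^3)\Phi_j\varepsilon$ plus terms carrying at least one derivative of $\Phi_j$, and $f_j[\widetilde{P_i}]$ as $\int(\widetilde{P_i}_{xxxx}+5\widetilde{P_i}\widetilde{P_i}_x^2+5\widetilde{P_i}^2\widetilde{P_i}_{xx}+\frac32\widetilde{P_i}^5)\Phi_j\varepsilon$ plus terms carrying at least one derivative of $\Phi_j$. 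Since $\Phi_{jx}$, $\Phi_{jxx}$ are bounded by $Ce^{-\sqrt\sigma|x-m_j|/2}$ (cf. \eqref{2der_exp}) and every centre $\widetilde{x_i}$ --- including $i=j$ --- is at distance $\ge\frac D2+\zeta t$ from $m_j$ by \eqref{2eq:-22}--\eqref{2e:-22-2}, all these terms are $\le Ce^{-2\theta D}$ by the Cauchy--Schwarz argument above (again using $\sigma<\beta^2$). We are then in the same situation as for $m_j$: the $i<j$ terms are killed by $\Phi_j$; the $i=j$ term equals $e[\widetilde{P_j}]$, resp. $f[\widetilde{P_j}]$, up to $O(e^{-2\theta D})$; and the $i>j$ terms equal $e[\widetilde{P_i}]$, resp. $f[\widetilde{P_i}]$, up to $O(e^{-2\theta D})$, with variations that vanish when $P_i=R_l$ (by \eqref{2orth-energ} and \eqref{2orth-F}) and are controlled by \eqref{2br-e-assump} for $e$ and by \eqref{2eq:ineq} --- itself deduced from \eqref{2br-m-assump}, \eqref{2br-e-assump} and the elliptic equation \eqref{2ellip-br} --- for $f$, when $P_i$ is a breather. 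Summing gives \eqref{2varlinenerg} and the last inequality.

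I expect the only real difficulty to be the bookkeeping: verifying that each of the many error terms produced by the expansions and the integrations by parts actually decays like $e^{-2\theta D}$, uniformly in $t\in[0,t^*]$. This is exactly what forces the quantitative constraints $\sigma<\beta^2$ and $\theta<\min(\beta/4,\sqrt\sigma/8)$ in the statement, and it succeeds because the separation between consecutive centres, and between each centre and each $m_{j'}$, grows at least linearly in $t$ (by \eqref{2eq:-22}--\eqref{2e:-22-2} and \eqref{2cons5}), so that after integrating in $x$ the $t$-dependent part of every exponential can be discarded; none of the individual estimates is harder than those already performed in the proof of Lemma \ref{2lem:const}.
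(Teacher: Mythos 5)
Your proposal is correct and follows essentially the same route as the paper, which disposes of this lemma in three sentences by expanding $\widetilde{P}=\sum_i\widetilde{P_i}$, reusing the cross-term and localization estimates from the proof of Lemma \ref{2lem:const} (with $\varepsilon$ bounded thanks to the bootstrap, for $D_0$ large and $a_0$ small with respect to $A$), and then invoking $m[\widetilde{P_i}]=e[\widetilde{P_i}]=f[\widetilde{P_i}]=0$ for solitons with $i\ge j+1$ and the induction hypothesis $\mathcal{P}_i$ for breathers with $i\ge j+1$ --- exactly your three-way split $i<j$, $i=j$, $i>j$. The only slip is cosmetic: \eqref{2eq:-8} bounds $\int\widetilde{P_i}^2\Phi_j$, so the norm itself satisfies $\Vert\widetilde{P_i}\Phi_j\Vert_{L^2}\le Ce^{-\sqrt{\sigma}D/8}$ rather than $Ce^{-\sqrt{\sigma}D/4}$, which is consistent with the constraint $\theta\le\sqrt{\sigma}/16$ that the paper actually imposes when the lemma is applied.
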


\begin{proof}[Proof]
We develop using $\widetilde{P}=\sum_{i=1}^{J}\widetilde{P_{i}}$.
We obtain terms with $\widetilde{P_{i}}\widetilde{P_{j}}$ with $i\neq j$
and the other terms that have all the same index. Knowing that $\varepsilon$
bounded for $D_{0}$ large enough and $a_{0}$ small enough (with
respect to $A$), we obtain the same bounds in the same way as for
the constant part.

Now, if, for $i\geq j+1$, $\widetilde{P_{i}}$ is a soliton, then
we have simply: $m[\widetilde{P_{i}}]=e[\widetilde{P_{i}}]=f[\widetilde{P_{i}}]=0.$

If, for $i\geq j+1$, $\widetilde{P_{i}}$ is a breather, we have
a bound for the variation of these quantities by $\mathcal{P}_{i}$. 
\end{proof}

\subsubsection{Quadratic part of the Taylor expansion}

We set: 
\begin{equation}
\mathcal{M}_{j}[X]:=\frac{1}{2}\int\varepsilon^{2}\Phi_{j},
\end{equation}
\begin{equation}
\mathcal{E}_{j}[X]:=\int\left[\frac{1}{2}\varepsilon_{x}^{2}-\frac{3}{2}X^{2}\varepsilon^{2}\right]\Phi_{j},
\end{equation}
\begin{equation}
\mathcal{F}_{j}[X]:=\int\left[\frac{1}{2}\varepsilon_{xx}^{2}-\frac{5}{2}X_{x}^{2}\varepsilon^{2}-10XX_{x}\varepsilon\varepsilon_{x}-\frac{5}{2}X^{2}\varepsilon_{x}^{2}+\frac{15}{4}X^{4}\varepsilon^{2}\right]\Phi_{j},
\end{equation}
and $\mathcal{M}_{j}(t):=\mathcal{M}_{j}[\widetilde{P}](t)$, ... 
\begin{lem}
\label{2lem:quad}If $\sigma<\beta^{2},\theta<\frac{\sqrt{\sigma}}{8}$,
if $D_{0}$ is large enough and $a_{0}$ is small enough, then for
any $t\in [0,t^*]$, 
\begin{equation}
\left|\mathcal{M}_{j}[\widetilde{P}](t)-\mathcal{M}_{j}[\widetilde{P_{j}}](t)\right|\leq Ce^{-2\theta D}+C\left(\frac{A}{Z_{j+1}}\right)^{2}\left(a^{2}+e^{-2\theta D}\right),\label{2quad-mass}
\end{equation}
\begin{equation}
\left|\mathcal{E}_{j}[\widetilde{P}](t)-\mathcal{E}_{j}[\widetilde{P_{j}}](t)\right|\leq Ce^{-2\theta D}+C\left(\frac{A}{Z_{j+1}}\right)^{2}\left(a^{2}+e^{-2\theta D}\right),
\end{equation}
\begin{equation}
\left|\mathcal{F}_{j}[\widetilde{P}](t)-\mathcal{F}_{j}[\widetilde{P_{j}}](t)\right|\leq Ce^{-2\theta D}+C\left(\frac{A}{Z_{j+1}}\right)^{2}\left(a^{2}+e^{-2\theta D}\right).
\end{equation}
\end{lem}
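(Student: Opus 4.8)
The plan is to handle the three functionals separately, exploiting that each is \emph{quadratic in $\varepsilon$} with coefficients built from the $\widetilde{P}$'s and their derivatives. The case of $\mathcal{M}_j$ is immediate: by definition $\mathcal{M}_j[X]=\tfrac12\int\varepsilon^2\Phi_j$ does not depend on $X$ at all, so $\mathcal{M}_j[\widetilde{P}](t)-\mathcal{M}_j[\widetilde{P_j}](t)=0$ and the first inequality holds with $0$ on the left.

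For $\mathcal{E}_j$ and $\mathcal{F}_j$ I would first substitute $\widetilde{P}=\widetilde{P_j}+\sum_{i\neq j}\widetilde{P_i}$ and expand. The leading terms $\tfrac12\int\varepsilon_x^2\Phi_j$ (in $\mathcal{E}_j$) and $\tfrac12\int\varepsilon_{xx}^2\Phi_j$ (in $\mathcal{F}_j$) carry no $\widetilde{P}$-coefficient, so they cancel in the difference; what remains is a finite sum of integrals of a product of two (for $\mathcal{E}_j$) or four (for $\mathcal{F}_j$) factors, each of the form $\widetilde{P_i}$ or $\widetilde{P_{i,x}}$, multiplied by one of $\varepsilon^2$, $\varepsilon\varepsilon_x$, $\varepsilon_x^2$, and integrated against $\Phi_j$ — and in every such monomial the object multi-index is not identically $j$, so at least one factor carries an index $i\neq j$. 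Bounding all but two of the object factors by $\|\widetilde{P_i}\|_{W^{1,\infty}}\le C$ (a problem-data constant, using \eqref{2cond2}), using $|\varepsilon\varepsilon_x|\le\tfrac12(\varepsilon^2+\varepsilon_x^2)$, and using the Sobolev bound $\|\varepsilon(t)\|_{W^{1,\infty}}\le C\|\varepsilon(t)\|_{H^2}\le C$ (bounded under the smallness/largeness conditions on $a_0,D_0$), each monomial is dominated by
\begin{equation}
C\int e^{-\beta|x-\widetilde{x_{i}}(t)|}\,e^{-\beta|x-\widetilde{x_{m}}(t)|}\,(\varepsilon^2+\varepsilon_x^2)\,\Phi_j\,dx
\end{equation}
for some pair $(i,m)\neq(j,j)$ (keeping two object factors to exploit their exponential localization; by \eqref{2cond2} the decay rate can be taken to be $\beta$, or a fixed fraction of it, uniformly in $t$).

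Next I would split into the same three cases as in the proofs of Lemmas \ref{2lem:const} and \ref{2lem:lin}. If $i\neq m$, the centers $\widetilde{x_i}(t),\widetilde{x_m}(t)$ are separated by at least $D$ by \eqref{2cons5}, so the $x$-integral of the two exponentials is $\le Ce^{-\beta D/2}$, which is $\le Ce^{-2\theta D}$ for $D\ge D_0$ large. If $i=m<j$ (so $j\ge2$), then $\widetilde{P_i}$ lives a distance $\ge\tfrac D2+\zeta t$ to the left of $m_j$ by \eqref{2e:-22-2}, so the computation leading to \eqref{2eq:-8} gives $\int e^{-2\beta|x-\widetilde{x_i}(t)|}\Phi_j\le Ce^{-\sqrt\sigma D/4}\le Ce^{-2\theta D}$. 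Finally, if $i=m>j$ (so $j\le J-1$), I would write $\Phi_j\le 1=\Phi_{j+1}+(1-\Phi_{j+1})$: on the $(1-\Phi_{j+1})$ piece, $\widetilde{P_i}$ lives a distance $\ge\tfrac D2+\zeta t$ to the \emph{right} of $m_{j+1}$ (by \eqref{2eq:-22} at index $j+1$, since $\widetilde{x_i}\ge\widetilde{x_{j+1}}$), so this piece is again $\le Ce^{-\sqrt\sigma D/4}$; on the $\Phi_{j+1}$ piece, $\int e^{-2\beta|x-\widetilde{x_i}(t)|}(\varepsilon^2+\varepsilon_x^2)\Phi_{j+1}\le C\int(\varepsilon^2+\varepsilon_x^2)\Phi_{j+1}\le C\bigl(\tfrac{A}{Z_{j+1}}\bigr)^2(a^2+e^{-2\theta D})$ by the inductive hypothesis \eqref{2epsilon-assump}. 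Summing the finitely many monomials (the case $i<j$ is vacuous when $j=1$, the case $i>j$ is vacuous when $j=J$) yields the three asserted bounds.

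I do not expect a real obstacle: this is of exactly the same flavour as Lemmas \ref{2lem:const}--\ref{2lem:lin} and is mostly bookkeeping. The one ingredient not already used there is the $i>j$ case, where the faster objects contribute through $\varepsilon$ itself rather than through scalar conservation-law variations, so one must reinsert the cutoff $\Phi_{j+1}$ to access the localized $H^2$-smallness of $\varepsilon$ supplied by $\mathcal{P}_{j+1}$ via \eqref{2epsilon-assump} (note that $\varepsilon_{xx}$ disappears after the cancellation of the leading terms, so only the $\varepsilon^2$ and $\varepsilon_x^2$ parts of \eqref{2epsilon-assump} are needed). One should also record that the hypotheses $\sigma<\beta^2$ and $\theta<\tfrac{\sqrt\sigma}8$ — which together also force $\theta<\tfrac\beta4$ — are precisely what make both $e^{-\beta D/2}$ and $e^{-\sqrt\sigma D/4}$ dominated by $e^{-2\theta D}$ after enlarging $D_0$.
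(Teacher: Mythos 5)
Your proof is correct and follows essentially the same route as the paper's (very terse) argument: expand $\widetilde{P}=\sum_i\widetilde{P_i}$, use the exponential separation of centers and the decay of $\Phi_j$ (resp.\ $1-\Phi_{j+1}$) for the terms involving an index $i<j$ or a pair of distinct indices, and invoke the inductive bound \eqref{2epsilon-assump} after reinserting $\Phi_{j+1}$ for the terms carrying only indices $i>j$. Your write-up simply makes explicit the case analysis that the paper leaves implicit, including the trivial identity $\mathcal{M}_{j}[\widetilde{P}]=\mathcal{M}_{j}[\widetilde{P_j}]$, which the paper also records right after the lemma.
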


\begin{proof}[Proof]
We develop using $\widetilde{P}=\sum_{i=1}^{J}\widetilde{P_{i}}$.
For terms with $\widetilde{P_{i}}$, with $i>j$, we use the induction
assumption for $\varepsilon$. For terms with $\widetilde{P_{i}}$,
with $i<j$, we do as in the previous sections. 
\end{proof}
Note that (\ref{2quad-mass}) is useless, because $\mathcal{M}_{j}[\widetilde{P}](t)-\mathcal{M}_{j}[\widetilde{P_{j}}](t)=0$
since $\mathcal{M}_{j}[X]$ do not depend on $X$, but we write it
in order to argue in the same way for the three conserved quantities.

\subsection{Lyapunov functional and simplifications}

We introduce the following Lyapunov functional: 
\begin{equation}
\mathcal{H}_{j}(t):=F_{j}(t)+2\left(b_{j}^{2}-a_{j}^{2}\right)E_{j}(t)+\left(a_{j}^{2}+b_{j}^{2}\right)^{2}M_{j}(t).
\end{equation}

We set:
\begin{equation}
\mathcal{K}(t):=F[\widetilde{P_{j}}](t)+2\left(b_{j}^{2}-a_{j}^{2}\right)E[\widetilde{P_{j}}](t)+\left(a_{j}^{2}+b_{j}^{2}\right)^{2}M[\widetilde{P_{j}}](t),
\end{equation}
\begin{equation}
\mathcal{L}(t):=f[\widetilde{P_{j}}](t)+2\left(b_{j}^{2}-a_{j}^{2}\right)e[\widetilde{P_{j}}](t)+\left(a_{j}^{2}+b_{j}^{2}\right)^{2}m[\widetilde{P_{j}}](t),
\end{equation}
\begin{equation}
\mathcal{Q}(t):=\mathcal{F}_{j}[\widetilde{P_{j}}](t)+2\left(b_{j}^{2}-a_{j}^{2}\right)\mathcal{E}_{j}[\widetilde{P_{j}}](t)+\left(a_{j}^{2}+b_{j}^{2}\right)^{2}\mathcal{M}_{j}[\widetilde{P_{j}}](t).
\end{equation}

We have the following:
\begin{lem}
If $\sigma<\beta^{2},\theta<\min\left(\frac{\beta}{4},\frac{\sqrt{\sigma}}{8}\right)$,
if $D_{0}$ is large enough and $a_{0}$ is small enough, then for
any $t\in [0,t^*]$, 
\begin{equation}
\left|\mathcal{H}_{j}(t)-\mathcal{H}_{j}(0)-\left(\mathcal{Q}(t)-\mathcal{Q}(0)\right)\right|\leq Ce^{-2\theta D}+C\left(\frac{A}{Z_{j+1}}\right)^{2}\left(a^{2}+e^{-2\theta D}\right).\label{2quad-var}
\end{equation}
\end{lem}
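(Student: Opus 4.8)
The plan is to insert $u=\widetilde{P}+\varepsilon$ into $\mathcal{H}_{j}=F_{j}+2(b_{j}^{2}-a_{j}^{2})E_{j}+(a_{j}^{2}+b_{j}^{2})^{2}M_{j}$ through the three Taylor expansions \eqref{2mass-quadr}, \eqref{2energ-quadr}, \eqref{2f-quadr}, and then to dispose of the constant, linear and quadratic contributions one at a time with Lemmas \ref{2lem:const}, \ref{2lem:lin} and \ref{2lem:quad}. Combining those three expansions with the problem-data coefficients $1$, $2(b_{j}^{2}-a_{j}^{2})$, $(a_{j}^{2}+b_{j}^{2})^{2}$ yields a splitting
\[
\mathcal{H}_{j}(t)=\widehat{\mathcal{H}}_{j}(t)+\widehat{\mathcal{L}}_{j}(t)+\widehat{\mathcal{Q}}_{j}(t)+\mathcal{R}_{j}(t),
\]
where $\widehat{\mathcal{H}}_{j}:=F_{j}[\widetilde{P}]+2(b_{j}^{2}-a_{j}^{2})E_{j}[\widetilde{P}]+(a_{j}^{2}+b_{j}^{2})^{2}M_{j}[\widetilde{P}]$, where $\widehat{\mathcal{L}}_{j}$ and $\widehat{\mathcal{Q}}_{j}$ are the corresponding combinations of $(f_{j}[\widetilde{P}],e_{j}[\widetilde{P}],m_{j}[\widetilde{P}])$ and of $(\mathcal{F}_{j}[\widetilde{P}],\mathcal{E}_{j}[\widetilde{P}],\mathcal{M}_{j}[\widetilde{P}])$, and where $|\mathcal{R}_{j}(t)|\le CA(a+e^{-\theta D})\int(\varepsilon^{2}+\varepsilon_{x}^{2})\Phi_{j}\le CA^{3}(a+e^{-\theta D})^{3}$ by the bootstrap bound $\|\varepsilon(t)\|_{H^{2}}\le A(a+e^{-\theta D})$; for $D_{0}$ large and $a_{0}$ small this last quantity is dominated by the right-hand side of the claim. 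Differencing the identity above at times $t$ and $0$ and invoking Lemma \ref{2lem:const} on $\widehat{\mathcal{H}}_{j}$, Lemma \ref{2lem:lin} on $\widehat{\mathcal{L}}_{j}$, and Lemma \ref{2lem:quad} (at $t$ and at $0$) on $\widehat{\mathcal{Q}}_{j}$ — each with the same fixed coefficients — reduces the estimate to $\bigl|(\mathcal{K}(t)-\mathcal{K}(0))+(\mathcal{L}(t)-\mathcal{L}(0))\bigr|\le Ce^{-2\theta D}+C\left(\frac{A}{Z_{j+1}}\right)^{2}\left(a^{2}+e^{-2\theta D}\right)$.

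It then suffices to prove that $\mathcal{L}(t)\equiv0$ and that $|\mathcal{K}(t)-\mathcal{K}(0)|\le CA^{3}(a+e^{-\theta D})^{3}$. For $\mathcal{L}$: if $P_{j}$ is a breather, testing the elliptic equation \eqref{2ellip-br} against $\varepsilon$ and using $e[X]=-\int(X_{xx}+X^{3})\varepsilon$ produces exactly $f[\widetilde{B_{k}}]+2(\beta_{k}^{2}-\alpha_{k}^{2})e[\widetilde{B_{k}}]+(\alpha_{k}^{2}+\beta_{k}^{2})^{2}m[\widetilde{B_{k}}]=0$, i.e. $\mathcal{L}\equiv0$; if $P_{j}=\widetilde{R_{l}}$ is a soliton, then $f[\widetilde{R_{l}}]=0$ and $e[\widetilde{R_{l}}]=0$ by \eqref{2orth-F} and \eqref{2orth-energ} and $m[\widetilde{R_{l}}]=\int\widetilde{R_{l}}\varepsilon=0$ by \eqref{2orth-cond}, so again $\mathcal{L}\equiv0$. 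For $\mathcal{K}$: when $P_{j}$ is a breather, $M,E,F$ are translation invariant while $\alpha_{k},\beta_{k}$ are frozen, so $\mathcal{K}$ is constant and $\mathcal{K}(t)-\mathcal{K}(0)=0$ (cf. Remark \ref{2rem:const}); when $P_{j}=\widetilde{R_{l}}$ is a soliton, \eqref{2solit-mass}, \eqref{2solit-energ} and the analogous identity for $F$ give $\mathcal{K}(t)=G(c_{l}(t))$ with $G(c):=c^{5/2}F[Q]+2c_{l}^{0}c^{3/2}E[Q]+(c_{l}^{0})^{2}c^{1/2}M[Q]$, and a direct computation — using that $Q_{c}$ is a critical point of $F+2cE+c^{2}M$ (its two elliptic equations), hence $\nabla\bigl(F+2c_{l}^{0}E+(c_{l}^{0})^{2}M\bigr)[Q_{c}]=(c-c_{l}^{0})^{2}Q_{c}$, together with $\int Q_{c}\,\partial_{c}Q_{c}=\tfrac12\partial_{c}\int Q_{c}^{2}=c^{-1/2}$ — shows that $G'(c)=c^{-1/2}(c-c_{l}^{0})^{2}$. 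Since $|c_{l}(t)-c_{l}^{0}|,|c_{l}(0)-c_{l}^{0}|\le A(a+e^{-\theta D})\le c_{l}^{0}/4$, integrating $G'$ over $[c_{l}(0),c_{l}(t)]$ gives $|\mathcal{K}(t)-\mathcal{K}(0)|\le CA^{3}(a+e^{-\theta D})^{3}$, which for $D_{0}$ large and $a_{0}$ small is again absorbed into the right-hand side. Collecting the bounds completes the proof.

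The crux is the soliton contribution to $\mathcal{K}$: one genuinely needs the \emph{cubic} vanishing of $c\mapsto(F+2c_{l}^{0}E+(c_{l}^{0})^{2}M)[Q_{c}]$ at $c=c_{l}^{0}$, since $|c_{l}(t)-c_{l}^{0}|$ is controlled only by $A(a+e^{-\theta D})$ and a merely quadratic vanishing would leave an $A^{2}$-sized error rather than the $(A/Z_{j+1})^{2}$ one needs for the induction to close. This is precisely the $H^{2}$-level manifestation of the variational structure of the mKdV soliton — the fourth-order analogue of Weinstein's $H^{1}$ coercivity analysis — and it is the reason one must work with solitons at the same level of regularity as breathers.
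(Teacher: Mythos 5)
Your proof is correct and follows the same overall route as the paper: the same three Taylor expansions \eqref{2mass-quadr}--\eqref{2f-quadr} combined with the coefficients $1$, $2(b_j^2-a_j^2)$, $(a_j^2+b_j^2)^2$, the same reduction via Lemmas \ref{2lem:const}, \ref{2lem:lin}, \ref{2lem:quad}, the same absorption of the cubic remainder using the bootstrap bound, and the same vanishing of $\mathcal{L}$ via the elliptic equation (breather) or the orthogonality conditions \eqref{2orth-cond}, \eqref{2orth-energ}, \eqref{2orth-F} (soliton). The one place you diverge is the soliton contribution to $\mathcal{K}$: the paper Taylor-expands $c\mapsto c^{5/2},c^{3/2},c^{1/2}$ around $c_l^0$ to second order and checks, by plugging in $M[Q]=2$, $E[Q]=-\tfrac{2}{3}$, $F[Q]=\tfrac{2}{5}$, that the linear and quadratic coefficients cancel, leaving a cubic remainder; you instead compute $G'(c)=c^{-1/2}(c-c_l^0)^2$ exactly from the two elliptic equations satisfied by $Q_c$ together with $\int Q_c\,\partial_c Q_c=\tfrac12\partial_c\int Q_c^2$. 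Your identity is consistent with the paper's numerology (one checks directly that $\tfrac52 c^{3/2}F[Q]+3c_l^0c^{1/2}E[Q]+\tfrac12(c_l^0)^2c^{-1/2}M[Q]=c^{-1/2}(c-c_l^0)^2$ with those values), and it makes the cubic degeneracy at $c=c_l^0$ transparent rather than appearing as a numerical coincidence; the paper's version has the advantage of requiring nothing beyond the explicit values of $M[Q],E[Q],F[Q]$. Your closing remark correctly identifies why the cubic (and not merely quadratic) vanishing is needed for the induction to close.
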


\begin{proof}[Proof]
From (\ref{2mass-quadr}), (\ref{2energ-quadr}), (\ref{2f-quadr}),
Lemmas \ref{2lem:const}, \ref{2lem:lin} and \ref{2lem:quad} we deduce
that:

if $\sigma<\beta^{2}$, $\theta>\min\left(\frac{\beta}{4},\frac{\sqrt{\sigma}}{8}\right)$,
if $D_{0}$ is large enough and $a_{0}$ is small enough, then for
any $t\in [0,t^*]$,
\begin{align}
\mathcal{H}_{j}(t)-\mathcal{H}_{j}(0) & =\mathcal{K}(t)-\mathcal{K}(0)+\mathcal{L}(t)-\mathcal{L}(0)+\mathcal{Q}(t)-\mathcal{Q}(0)+O\left(A\left(a+e^{-\theta D}\right)\int\left(\varepsilon^{2}+\varepsilon_{x}^{2}\right)\Phi_{j}\right)\label{2eq:apprH}\nonumber\\
 & +O\left(e^{-2\theta D}\right)+O\left(\left(\frac{A}{Z_{j+1}}\right)^{2}\left(a^{2}+e^{-2\theta D}\right)\right).
\end{align}

Now, from the bootstrap assumption (\ref{2cond1}), we see that if
we take $D_{0}$ large enough and $a_{0}$ small enough, then we can
bound $A\left(a+e^{-\theta D}\right)\int\left(\varepsilon^{2}+\varepsilon_{x}^{2}\right)\Phi_{j}$
by $\left(\frac{A}{Z_{j+1}}\right)^{2}\left(a^{2}+e^{-2\theta D}\right)$.
So, for any $t\in [0,t^*]$,
\begin{equation}\begin{aligned}
\MoveEqLeft\mathcal{H}_{j}(t)-\mathcal{H}_{j}(0)\\
&=\mathcal{K}(t)-\mathcal{K}(0)+\mathcal{L}(t)-\mathcal{L}(0)+\mathcal{Q}(t)-\mathcal{Q}(0)+O\left(e^{-2\theta D}\right)+O\left(\left(\frac{A}{Z_{j+1}}\right)^{2}\left(a^{2}+e^{-2\theta D}\right)\right).\label{2eq:}
\end{aligned}\end{equation}

Now, we will simplify $\mathcal{K}(t)-\mathcal{K}(0)$ and $\mathcal{L}(t)-\mathcal{L}(0)$.

\textbf{Simplification of $\mathcal{K}(t)-\mathcal{K}(0)$:}

If $\widetilde{P_{j}}$ is a breather, then $\mathcal{K}(t)-\mathcal{K}(0)=0$.
If $\widetilde{P_{j}}=\widetilde{R_{l}}$ is a soliton, then we have
\begin{align}
\mathcal{K}(t) & =F[\widetilde{R_{l}}](t)+2c_{l}^{0}E[\widetilde{R_{l}}](t)+\left(c_{l}^{0}\right)^{2}M[\widetilde{R_{l}}](t)\nonumber \\
 & =c_{l}(t)^{5/2}F[Q]+2c_{l}^{0}c_{l}(t)^{3/2}E[Q]+\left(c_{l}^{0}\right)^{2}c_{l}(t)^{1/2}M[Q].\label{2eq:-6}
\end{align}

Now, we observe that $c_{l}(t)=c_{l}^{0}+\left(c_{l}(t)-c_{l}^{0}\right)$,
this is why we want to do a Taylor expansion for each power function.
We recall that by (\ref{2eq:second}), for $D_{0}$ large enough and
$a_{0}$ small enough (with respect to $Z_{j+1}$), $\left|c_{l}(t)-c_{l}^{0}\right|^{3}$
is bounded by $C\left(\frac{A}{Z_{j+1}}\right)^{2}\left(a^{2}+e^{-2\theta D}\right)$.
This is why we may approximate $\mathcal{K}(t)$ by a Taylor expansion
of order 2: 
\begin{align}
\left(c_{l}^{0}\right)^{5/2}\left(1+\frac{5}{2}\frac{c_{l}(t)-c_{l}^{0}}{c_{l}^{0}}+\frac{15}{8}\left(\frac{c_{l}(t)-c_{l}^{0}}{c_{l}^{0}}\right)^{2}\right)F[Q]\notag\\
+2\left(c_{l}^{0}\right)^{5/2}\left(1+\frac{3}{2}\frac{c_{l}(t)-c_{l}^{0}}{c_{l}^{0}}+\frac{3}{8}\left(\frac{c_{l}(t)-c_{l}^{0}}{c_{l}^{0}}\right)^{2}\right)E[Q]\nonumber \\
+\left(c_{l}^{0}\right)^{5/2}\left(1+\frac{1}{2}\frac{c_{l}(t)-c_{l}^{0}}{c_{l}^{0}}-\frac{1}{8}\left(\frac{c_{l}(t)-c_{l}^{0}}{c_{l}^{0}}\right)^{2}\right)M[Q]\label{2eq:-5}\\
=\left(c_{l}^{0}\right)^{5/2}\left(F[Q]+2E[Q]+M[Q]\right)+\left(c_{l}(t)-c_{l}^{0}\right)\left(c_{l}^{0}\right)^{3/2}\left(\frac{5}{2}F[Q]+3E[Q]+\frac{1}{2}M[Q]\right)\nonumber \\
+\left(c_{l}(t)-c_{l}^{0}\right)^{2}\left(c_{l}^{0}\right)^{1/2}\left(\frac{15}{8}F[Q]+\frac{3}{4}E[Q]-\frac{1}{8}M[Q]\right).\nonumber 
\end{align}

Now, we use the fact that $M[Q]=2$, $E[Q]=-\frac{2}{3}$ and $F[Q]=\frac{2}{5}$,
and we obtain that the Taylor expression of order 2 is in fact: 
\begin{equation}
\frac{16}{15}\left(c_{l}^{0}\right)^{5/2}+0\left(c_{l}(t)-c_{l}^{0}\right)\left(c_{l}^{0}\right)^{3/2}+0\left(c_{l}(t)-c_{l}^{0}\right)\left(c_{l}^{0}\right)^{3/2}=\frac{16}{15}\left(c_{l}^{0}\right)^{5/2}.
\end{equation}

Thus, 
\begin{equation}
\left|\mathcal{K}(t)-\frac{16}{15}\left(c_{l}^{0}\right)^{5/2}\right|\leq C\left(\frac{A}{Z_{j+1}}\right)^{2}\left(a^{2}+e^{-2\theta D}\right),
\end{equation}
and so, 
\begin{equation}
\left|\mathcal{K}(t)-\mathcal{K}(0)\right|\leq C\left(\frac{A}{Z_{j+1}}\right)^{2}\left(a^{2}+e^{-2\theta D}\right).
\end{equation}

\textbf{Simplification of $\mathcal{L}(t)-\mathcal{L}(0)$:}

If $\widetilde{P_{j}}$ is a breather, we have, by the elliptic equation
verified by a breather, that $\mathcal{L}(t)=0$.

If $\widetilde{P_{j}}$ is a soliton, we have, by (\ref{2orth-cond}),
(\ref{2orth-energ}) and (\ref{2orth-F}), that $\mathcal{L}(t)=0$
(we have simply $m[\widetilde{P_{j}}]=e[\widetilde{P_{j}}]=f[\widetilde{P_{j}}]=0$).
\end{proof}

\subsection{Coercivity}
\begin{lem}
If $\sigma$ is small enough (with respect to constants that depend
only on problem data), \begin{align}
\theta<\min\left(\frac{\beta}{4},\frac{\sqrt{\sigma}}{8}\right),
\end{align} 
if $D_{0}$ is large enough and $a_{0}$ is small enough, then for
any $t\in [0,t^*]$, 
\begin{equation}
\int\left(\varepsilon^{2}+\varepsilon_{x}^{2}+\varepsilon_{xx}^{2}\right)\Phi_{j}\leq C\mathcal{Q}(t)+C\left(\int\varepsilon\widetilde{P_{j}}\right)^{2}+Ce^{-2\theta D}.\label{2coercivity}
\end{equation}
\end{lem}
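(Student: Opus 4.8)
The plan is to reduce the localized coercivity \eqref{2coercivity} to the non-localized coercivity of the second variation of the Lyapunov functional around the single object $\widetilde{P_j}$. Recall that $\widetilde{P_j}$ is a critical point of $\mathcal{G}[v]:=F[v]+2(b_j^2-a_j^2)E[v]+(a_j^2+b_j^2)^2M[v]$. Let $\mathcal{B}$ denote its second variation at $\widetilde{P_j}$, i.e.\ the quadratic form on $H^2(\mathbb{R})$ obtained from $\mathcal{Q}(t)$ by deleting the cut-off $\Phi_j$ and replacing $\varepsilon$ by a generic $w$. By \cite{key-1} when $P_j$ is a breather, and by the $H^2$-level variational structure of solitons described in the introduction when $P_j$ is a soliton, $\mathcal{B}$ is coercive modulo three directions; using the two orthogonality conditions of \eqref{2orth-cond} carried by $\widetilde{P_j}$, this takes the form: there is $\lambda>0$, uniform in $t$ by translation (and phase) invariance, such that $\mathcal{B}[w]\ge\lambda\Vert w\Vert_{H^2}^2-\lambda^{-1}\bigl(\int w\,\widetilde{P_j}\bigr)^2$ whenever $w$ is $L^2$-orthogonal to the two translation directions of $\widetilde{P_j}$. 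When $\widetilde{P_j}=\widetilde{R_l}$ is a soliton, the coefficients $(a_j,b_j)=(0,\sqrt{c_l^0})$ are matched to $c_l^0$, so one applies the coercivity at $Q_{c_l^0}$ and carries the discrepancy $\vert c_l(t)-c_l^0\vert$, small by \eqref{2cond2}, as an error.

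First I would set $w:=\varepsilon\sqrt{\Phi_j}$ and write $g:=\sqrt{\Psi}$. The pointwise inequalities $\vert\Psi'\vert\le\tfrac{\sqrt\sigma}{2}\Psi$, $\vert\Psi''\vert\le\tfrac{\sqrt\sigma}{2}\vert\Psi'\vert$, $\vert\Psi'''\vert\le\tfrac{\sqrt\sigma}{2}\vert\Psi''\vert$ give $\vert g'\vert+\vert g''\vert\le C\sqrt\sigma\,g$. Expanding $w_x=\varepsilon_xg+\varepsilon g'$ and $w_{xx}=\varepsilon_{xx}g+2\varepsilon_xg'+\varepsilon g''$ and using these bounds, every cross term carries a factor $\sqrt\sigma$ and the weight $g^2=\Phi_j$, whence
\[
\Vert w\Vert_{H^2}^2=\int(\varepsilon^2+\varepsilon_x^2+\varepsilon_{xx}^2)\Phi_j+O(\sqrt\sigma)\int(\varepsilon^2+\varepsilon_x^2+\varepsilon_{xx}^2)\Phi_j,
\]
and, since on the potential terms of $\mathcal{B}$ the factors $\widetilde{P_j}$ and its derivatives are reproduced verbatim, $\mathcal{B}[w]=\mathcal{Q}(t)+O(\sqrt\sigma)\int(\varepsilon^2+\varepsilon_x^2+\varepsilon_{xx}^2)\Phi_j$.

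Next I would control the failure of orthogonality of $w$. For $Z$ any of the two translation directions of $\widetilde{P_j}$, $\int\varepsilon Z=0$ by \eqref{2orth-cond}, so $\int wZ=\int\varepsilon(\sqrt{\Phi_j}-1)Z$; since $\widetilde{x_j}(t)-m_j(t)\ge\tfrac{D}{2}+\zeta t$ by \eqref{2eq:-22} and $Z$ is exponentially localized around $\widetilde{x_j}(t)$, on $\mathrm{supp}\,Z$ we have $\vert\sqrt{\Phi_j}-1\vert\le Ce^{-\sqrt\sigma D/4}$, hence $\vert\int wZ\vert\le Ce^{-\sqrt\sigma D/4}\Vert\varepsilon\Vert_{H^2}$; likewise $\bigl\vert\int w\,\widetilde{P_j}-\int\varepsilon\,\widetilde{P_j}\bigr\vert\le Ce^{-\sqrt\sigma D/4}\Vert\varepsilon\Vert_{H^2}$. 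Writing $w=w^\perp+Pw$ with $P$ the $L^2$-projection onto the span of the two bad directions (so $\Vert Pw\Vert_{H^2}\le Ce^{-\sqrt\sigma D/4}\Vert\varepsilon\Vert_{H^2}$), applying the single-object coercivity to $w^\perp$, expanding $\mathcal{B}[w]=\mathcal{B}[w^\perp]+2\mathcal{B}(w^\perp,Pw)+\mathcal{B}[Pw]$, using boundedness of $\mathcal{B}$ on $H^2$ and Young's inequality, and $\Vert w^\perp\Vert_{H^2}^2\ge\Vert w\Vert_{H^2}^2-Ce^{-\sqrt\sigma D/2}\Vert\varepsilon\Vert_{H^2}^2$, we get for any small $\delta>0$
\[
\mathcal{B}[w]\ge(\lambda-\delta)\Vert w\Vert_{H^2}^2-C\Bigl(\int\varepsilon\,\widetilde{P_j}\Bigr)^2-Ce^{-\sqrt\sigma D/2}\Vert\varepsilon\Vert_{H^2}^2.
\]
Combining with the previous step, choosing $\sigma$ small so that $\lambda-\delta-O(\sqrt\sigma)\ge\lambda/2$, and inserting the bootstrap bound $\Vert\varepsilon\Vert_{H^2}\le A(a+e^{-\theta D})=O(1)$ — so that $e^{-\sqrt\sigma D/2}\Vert\varepsilon\Vert_{H^2}^2\le Ce^{-2\theta D}$ since $\theta<\tfrac{\sqrt\sigma}{8}$ — yields $\tfrac{\lambda}{2}\int(\varepsilon^2+\varepsilon_x^2+\varepsilon_{xx}^2)\Phi_j\le\mathcal{Q}(t)+C\bigl(\int\varepsilon\,\widetilde{P_j}\bigr)^2+Ce^{-2\theta D}$, which is \eqref{2coercivity}.

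The hard part is the almost-orthogonality bookkeeping: one must check that projecting $w=\varepsilon\sqrt{\Phi_j}$ onto the ``bad'' directions of $\mathcal{B}$ produces only errors exponentially small in $D$, and that these — together with the $O(\sqrt\sigma)$ losses from the derivatives of $\sqrt{\Phi_j}$ — are all absorbable into the left-hand side; this is exactly where the smallness of $\sigma$ (relative to $\lambda$, $\beta$ and the remaining problem data) and the constraint $\theta<\tfrac{\sqrt\sigma}{8}$ enter, together with the uniform $H^2$-bound on $\varepsilon$ from \eqref{2cond1}. A secondary technical point, in the soliton case, is to make the coercivity at $Q_{c_l^0}$ usable when the true profile is $Q_{c_l(t)}$, absorbing $\vert c_l(t)-c_l^0\vert$ into the error terms.
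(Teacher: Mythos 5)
Your proposal follows essentially the same route as the paper: set $w=\varepsilon\sqrt{\Phi_j}$, absorb the $O(\sqrt{\sigma})$ commutator errors from derivatives of $\sqrt{\Phi_j}$, invoke the single-object coercivity of the second variation (from \cite{key-1}/\cite{key-49}) modulo the almost-satisfied orthogonality conditions, and convert $\int w\,\widetilde{P_j}$ into $\int\varepsilon\,\widetilde{P_j}$ up to $Ce^{-\theta D}$. The only (cosmetic) difference is in the soliton case, where the paper applies the coercivity with the quadratic form built on $c_l(t)$ (its $\mathcal{Q}_0$) and then bounds $\vert\mathcal{Q}_0(t)-\mathcal{Q}(t)\vert$, whereas you anchor at $c_l^0$ and carry $\vert c_l(t)-c_l^0\vert$ as the error; both are absorbed the same way.
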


\begin{proof}[Proof]
We notice that $\int\left(\varepsilon^{2}+\varepsilon_{x}^{2}+\varepsilon_{xx}^{2}\right)\Phi_{j}$
is $\Vert\varepsilon\sqrt{\Phi_{j}}\Vert_{H^{2}}^{2}$ modulo some
terms that can be bounded by \begin{align}
\label{2modulo}
C\sqrt{\sigma}\int\left(\varepsilon^{2}+\varepsilon_{x}^{2}+\varepsilon_{xx}^{2}\right)\Phi_{j}.
\end{align} 
This is why we will bound $\Vert\varepsilon\sqrt{\Phi_{j}}\Vert_{H^{2}}^{2}$.
We can bound it by the canonical quadratic form associated to $\widetilde{P_{j}}$
and evaluated in $\varepsilon\sqrt{\Phi_{j}}$, if $\varepsilon\sqrt{\Phi_{j}}$
satisfies quite well the orthogonality conditions, which is the case
(see Sections 5.4 and (372) in \cite{key-49}).

So, we obtain that, if $\widetilde{P_{j}}$ is a breather, the canonical
quadratic form is $\mathcal{Q}(t)$ modulo \eqref{2modulo},
and we have 
\begin{equation}
\int\left(\varepsilon^{2}+\varepsilon_{x}^{2}+\varepsilon_{xx}^{2}\right)\Phi_{j}\leq C\mathcal{Q}(t)+C\left(\int\varepsilon\sqrt{\Phi_{j}}\widetilde{P_{j}}\right)^{2}+C\sqrt{\sigma}\int\left(\varepsilon^{2}+\varepsilon_{x}^{2}+\varepsilon_{xx}^{2}\right)\Phi_{j},
\end{equation}
which means that if $\sigma$ is small enough, 
\begin{equation}
\int\left(\varepsilon^{2}+\varepsilon_{x}^{2}+\varepsilon_{xx}^{2}\right)\Phi_{j}\leq C\mathcal{Q}(t)+C\left(\int\varepsilon\sqrt{\Phi_{j}}\widetilde{P_{j}}\right)^{2},
\end{equation}
and we check that $\int\varepsilon\sqrt{\Phi_{j}}\widetilde{P_{j}}$
is $\int\varepsilon\widetilde{P_{j}}$ modulo $Ce^{-\theta D}$. So,
\begin{equation}
\int\left(\varepsilon^{2}+\varepsilon_{x}^{2}+\varepsilon_{xx}^{2}\right)\Phi_{j}\leq C\mathcal{Q}(t)+C\left(\int\varepsilon\widetilde{P_{j}}\right)^{2}+Ce^{-2\theta D}.
\end{equation}

If $\widetilde{P_{j}}=\widetilde{R_{l}}$ is a soliton, the canonical
quadratic form is, modulo $C\sqrt{\sigma}\int\left(\varepsilon^{2}+\varepsilon_{x}^{2}+\varepsilon_{xx}^{2}\right)\Phi_{j}$,
\begin{equation}
\mathcal{Q}_{0}(t):=\mathcal{F}_{j}[\widetilde{R_{l}}](t)+2c_{l}(t)\mathcal{E}_{j}[\widetilde{R_{l}}](t)+c_{l}(t)^{2}\mathcal{M}_{j}[\widetilde{R_{l}}](t).
\end{equation}

This is why, for the same reasons as above, we have 
\begin{equation}
\int\left(\varepsilon^{2}+\varepsilon_{x}^{2}+\varepsilon_{xx}^{2}\right)\Phi_{j}\leq C\mathcal{Q}_{0}(t).
\end{equation}

This is why, we need to be able to bound $\left|\mathcal{Q}_{0}(t)-\mathcal{Q}(t)\right|$.
We have 
\begin{equation}
\mathcal{Q}_{0}(t)-\mathcal{Q}(t)=2\left(c_{l}(t)-c_{l}^{0}\right)\mathcal{E}_{j}[\widetilde{R_{l}}](t)+2c_{l}^{0}\left(c_{l}(t)-c_{l}^{0}\right)\mathcal{M}_{j}[\widetilde{R_{l}}](t)+\left(c_{l}(t)-c_{l}^{0}\right)^{2}\mathcal{M}_{j}[\widetilde{R_{l}}](t),
\end{equation}
and so, because $\mathcal{M}_{j}$ and $\mathcal{E}_{j}$ are quadratic
in $\varepsilon$, we have that 
\begin{equation}
\left|\mathcal{Q}_{0}(t)-\mathcal{Q}(t)\right|\leq CA\left(a+e^{-\theta D}\right)\int\left(\varepsilon^{2}+\varepsilon_{x}^{2}\right)\Phi_{j},
\end{equation}
and so, if we take $a_{0}$ small enough and $D_{0}$ large enough,
we obtain 
\begin{equation}
\int\left(\varepsilon^{2}+\varepsilon_{x}^{2}+\varepsilon_{xx}^{2}\right)\Phi_{j}\leq C\mathcal{Q}(t).
\end{equation}
\end{proof}

\subsection{Proof of $\mathcal{P}_{j}$}

Now, we are left to prove $\mathcal{P}_{j}$. More precisely, if $P_{j}$
is a soliton, we will prove (\ref{2epsilon-ccl}) and (\ref{2velo-ccl});
if $P_{j}$ is a breather, we will prove (\ref{2epsilon-ccl}), (\ref{2eq:mass_var})
and (\ref{2eq:energ_var}). We will distinguish several cases. We assume
that $\sigma\leq\min\left(\zeta,\beta^{2}\right)$, $\theta\leq\min\left(\frac{\beta}{4},\frac{\sqrt{\sigma}}{16}\right)$,
and that $D_{0}$ is large enough and $a_{0}$ is small enough (depending
on $A,\theta$), so that all the previous lemmas are verified.

\subsubsection{Case when $P_{j}$ is a soliton}

\paragraph{\emph{Proof of (\ref{2epsilon-ccl})}}

By Lemma \ref{2lem:mono} and the fact that $b_j^2-a_j^2>0$ for $j\geq2$ (which is a direct consequence of $v_2>0$), we have that, if $j\geq2$, for any $t\in [0,t^*]$,
\begin{equation}
\mathcal{H}_{j}(t)-\mathcal{H}_{j}(0)\leq Ce^{-2\theta D}.\label{2monoto-gl}
\end{equation}
\eqref{2monoto-gl} is also true for $j=1$, because of Remark \ref{2rem:mono}.

We have, by (\ref{2quad-var}), (\ref{2monoto-gl}), the definition
of $\mathcal{Q}(0)$ and (\ref{2eq:first}) for $\varepsilon(0)$,
for any $t\in [0,t^*]$,
\begin{align}
\mathcal{Q}(t) & =\left[\mathcal{Q}(t)-\mathcal{Q}(0)-\left(\mathcal{H}_{j}(t)-\mathcal{H}_{j}(0)\right)\right]+\left[\mathcal{H}_{j}(t)-\mathcal{H}_{j}(0)\right]+\mathcal{Q}(0)\\
 & \leq Ce^{-2\theta D}+C\left(\frac{A}{Z_{j+1}}\right)^{2}\left(a^{2}+e^{-2\theta D}\right)+C\Vert\varepsilon(0)\Vert_{H^{2}}^{2}\\
 & \leq C\left(a^{2}+e^{-2\theta D}\right)+C\left(\frac{A}{Z_{j+1}}\right)^{2}\left(a^{2}+e^{-2\theta D}\right).\label{2Q(t)-maj}
\end{align}

And so, by (\ref{2coercivity}) and (\ref{2Q(t)-maj}), we have for
any $t\in [0,t^*]$,
\begin{align}
\int\left(\varepsilon^{2}+\varepsilon_{x}^{2}+\varepsilon_{xx}^{2}\right)\Phi_{j} & \leq C\mathcal{Q}(t)+C\left(\int\varepsilon\widetilde{P_{j}}\right)^{2}+Ce^{-2\theta D}\\
 & \leq C\left(a^{2}+e^{-2\theta D}\right)+C\left(\frac{A}{Z_{j+1}}\right)^{2}\left(a^{2}+e^{-2\theta D}\right)+C\left(\int\varepsilon\widetilde{P_{j}}\right)^{2}.\label{2maj-epsilon}
\end{align}

Because of (\ref{2orth-cond}) and $\widetilde{P_{j}}$ is a soliton,
we have that
\begin{equation}
\int\varepsilon\widetilde{P_{j}}=0.
\end{equation}

So, the proof of (\ref{2epsilon-ccl}) is completed for a suitable
constant $Z_{j}$ that will be precised later.
\paragraph{\emph{Proof of (\ref{2velo-ccl})}}

From (\ref{2mass-quadr}), (\ref{2solit-mass}), (\ref{2var-mass}),
(\ref{2varlinmass}), (\ref{2orth-cond}) and (\ref{2maj-epsilon}),
we have for any $t\in [0,t^*]$: 
\begin{align}
M_{j}(0)-M_{j}(t) & =M_{j}[\widetilde{P}](0)-M_{j}[\widetilde{P}](t)+\int\widetilde{P}\varepsilon\Phi_{j}(0)-\int\widetilde{P}\varepsilon\Phi_{j}(t)+\frac{1}{2}\int\varepsilon^{2}\Phi_{j}(0)-\frac{1}{2}\int\varepsilon^{2}\Phi_{j}(t)\label{2eq:mass-diff}\\
 & =\left(c_{l}(0)^{1/2}-c_{l}(t)^{1/2}\right)M[Q]+O\left(\left(a^{2}+e^{-2\theta D}\right)\right)+O\left(\left(\frac{A}{Z_{j+1}}\right)^{2}\left(a^{2}+e^{-2\theta D}\right)\right).\nonumber 
\end{align}

Similarly, from (\ref{2energ-quadr}), (\ref{2solit-energ}), (\ref{2var-energ}),
(\ref{2varlinenerg}), (\ref{2orth-energ}) and (\ref{2maj-epsilon}),
by taking $a_{0}$ smaller and $D_{0}$ larger with respect to $A$
if needed, we have for any $t\in [0,t^*]$: 
\begin{equation}
E_{j}(0)-E_{j}(t)=\left(c_{l}(0)^{3/2}-c_{l}(t)^{3/2}\right)E[Q]+O\left(\left(a^{2}+e^{-2\theta D}\right)\right)+O\left(\left(\frac{A}{Z_{j+1}}\right)^{2}\left(a^{2}+e^{-2\theta D}\right)\right).\label{2eq:energ-diff}
\end{equation}

For $\eta=\frac{1}{2},\frac{3}{2}$, because we know that $c_{l}(t)$
is not too far from $c_{l}(0)$ (and the both are not too far from
$c_{l}^{0}$, by (\ref{2eq:second})), we can write for any $t\in [0,t^*]$:
\begin{equation}
c_{l}(t)^{\eta}=\left(c_{l}(0)\right)^{\eta}\left(1+\eta\frac{c_{l}(t)-c_{l}(0)}{c_{l}(0)}+O\left(\left(c_{l}(t)-c_{l}(0)\right)^{2}\right)\right),
\end{equation}
and so for any $t\in [0,t^*]$,
\begin{equation}
c_{l}(t)^{\eta}-c_{l}(0)^{\eta}=\eta c_{l}(0)^{\eta-1}\left(c_{l}(t)-c_{l}(0)\right)+O\left(\left(c_{l}(t)-c_{l}(0)\right)^{2}\right),
\end{equation}
and if $a_{0}$ is small enough and $D_{0}$ is large enough, we have
by (\ref{2cond2}), for any $t\in [0,t^*]$:
\begin{equation}
\left|O\left(\left(c_{l}(t)-c_{l}(0)\right)^{2}\right)\right|\leq\frac{1}{2}\eta c_{l}(0)^{\eta-1}\left|c_{l}(t)-c_{l}(0)\right|.
\end{equation}

Thus, for any $t\in [0,t^*]$,
\begin{equation}
2\eta c_{l}(0)^{\eta-1}\left|c_{l}(t)-c_{l}(0)\right|\geq\left|c_{l}(t)^{\eta}-c_{l}(0)^{\eta}\right|\geq\frac{\eta c_{l}(0)^{\eta-1}}{2}\left|c_{l}(t)-c_{l}(0)\right|,\label{2eq:ineqs}
\end{equation}
where $c_{l}(0)^{\eta-1}$ is between $\min\{\frac{c_{p}^{0}}{2},1\leq p\leq L\}^{\eta-1}$
and $\max\left\{ 2c_{p}^{0},1\leq p\leq L\right\} ^{\eta-1}$ by (\ref{2eq:second}),
and so is bounded above and below by a constant that depends only
on the shape parameters of the solitons. In order to bound $\vert c_{l}(t)-c_{l}(0)\vert$
for a given $t\in [0,t^*]$, we will distinguish two cases.

\subparagraph{\underline{Case when $c_{l}(t)-c_{l}(0)\protect\geq0$}}

From (\ref{2eq:mass-diff}) and (\ref{2eq:ineqs}) for $\eta=\frac{1}{2}$,
we can say that 
\begin{align}
\left|c_{l}(t)-c_{l}(0)\right| & \leq C\left(c_{l}(t)^{1/2}-c_{l}(0)^{1/2}\right)\label{2eq:-4}\\
 & \leq C\frac{M_{j}(t)-M_{j}(0)}{M[Q]}+C\left(a^{2}+e^{-2\theta D}\right)+C\left(\frac{A}{Z_{j+1}}\right)^{2}\left(a^{2}+e^{-2\theta D}\right).\nonumber 
\end{align}

Now, $M[Q]>0$ and from Lemma \ref{2lem:mono},
\begin{equation}
\left|c_{l}(t)-c_{l}(0)\right|\leq C\left(a^{2}+e^{-2\theta D}\right)+C\left(\frac{A}{Z_{j+1}}\right)^{2}\left(a^{2}+e^{-2\theta D}\right).
\end{equation}

Thus, (\ref{2velo-ccl}) is established for a suitable constant $Z_{j}$
that will be precised later.

\subparagraph{\underline{Case when $c_{l}(t)-c_{l}(0)\protect\leq0$}}

From (\ref{2eq:energ-diff}) and (\ref{2eq:ineqs}) for $\eta=\frac{3}{2}$,
we can say that 
\begin{align}
\left|c_{l}(t)-c_{l}(0)\right| & \leq C\left(c_{l}(0)^{3/2}-c_{l}(t)^{3/2}\right)\label{2eq:-3}\\
 & \leq C\frac{E_{j}(0)-E_{j}(t)}{E[Q]}+C\left(a^{2}+e^{-2\theta D}\right)+C\left(\frac{A}{Z_{j+1}}\right)^{2}\left(a^{2}+e^{-2\theta D}\right).\nonumber 
\end{align}

Now, $E[Q]<0$ and from Lemma \ref{2lem:mono}, (\ref{2eq:mass-diff})
and (\ref{2eq:ineqs}) for $\eta=\frac{1}{2}$,
\begin{align}
\left|c_{l}(t)-c_{l}(0)\right| & \leq C\omega_{1}\left(M_{j}(0)-M_{j}(t)\right)+C\left(a^{2}+e^{-2\theta D}\right)+C\left(\frac{A}{Z_{j+1}}\right)^{2}\left(a^{2}+e^{-2\theta D}\right)\label{2eq:-2}\\
 & \leq C\omega_{1}\left(c_{l}(0)^{1/2}-c_{l}(t)^{1/2}\right)+C\left(a^{2}+e^{-2\theta D}\right)+C\left(\frac{A}{Z_{j+1}}\right)^{2}\left(a^{2}+e^{-2\theta D}\right)\nonumber \\
 & \leq C\omega_{1}\left|c_{l}(t)-c_{l}(0)\right|+C\left(a^{2}+e^{-2\theta D}\right)+C\left(\frac{A}{Z_{j+1}}\right)^{2}\left(a^{2}+e^{-2\theta D}\right),\nonumber 
\end{align}
and so, by taking $\omega_{1}$ small enough, we may deduce the desired
inequality: 
\begin{equation}
\left|c_{l}(t)-c_{l}(0)\right|\leq C\left(a^{2}+e^{-2\theta D}\right)+C\left(\frac{A}{Z_{j+1}}\right)^{2}\left(a^{2}+e^{-2\theta D}\right).
\end{equation}

Thus, (\ref{2velo-ccl}) is established for a suitable constant $Z_{j}$
that will be precised later.

\subsubsection{Case when $P_{j}$ is a breather}

\paragraph{\emph{Preliminaries}}

By the same argument as in the case when $P_{j}$ is a soliton,
we establish (\ref{2maj-epsilon}). However, we are not able to prove
(\ref{2epsilon-ccl}) immediately, because $\int\widetilde{P_{j}}\varepsilon$
is not necessarily equal to $0$ in the case when $\widetilde{P_{j}}$
is a breather.

From Lemma \ref{2lem:lin}, (\ref{2mass-quadr}), Lemma \ref{2lem:const}
and Remark \ref{2rem:const}, we have that for any $t\in [0,t^*]$,

\begin{align}
\int\widetilde{P_{j}}\varepsilon(t)-\int\widetilde{P_{j}}\varepsilon(0) & =M_{j}(t)-M_{j}(0)-\frac{1}{2}\int\varepsilon^{2}\Phi_{j}(t)+\frac{1}{2}\int\varepsilon^{2}\Phi_{j}(0)\nonumber\\
& +O\left(e^{-2\theta D}\right)+O\left(\left(\frac{A}{Z_{j+1}}\right)^{2}\left(a^{2}+e^{-2\theta D}\right)\right).
\end{align}

then we use (\ref{2mono-m}) and (\ref{2eq:first}) for $\varepsilon(0)$,
we have that for any $t\in [0,t^*]$,
\begin{align}
\int\widetilde{P_{j}}\varepsilon(t)-\int\widetilde{P_{j}}\varepsilon(0) & \leq-\frac{1}{2}\int\varepsilon^{2}\Phi_{j}(t)+\frac{1}{2}\int\varepsilon^{2}\Phi_{j}(0)+O\left(e^{-2\theta D}\right)+O\left(\left(\frac{A}{Z_{j+1}}\right)^{2}\left(a^{2}+e^{-2\theta D}\right)\right)\nonumber\\
 & \leq C\left(a^{2}+e^{-2\theta D}\right)+C\left(\frac{A}{Z_{j+1}}\right)^{2}\left(a^{2}+e^{-2\theta D}\right).\label{2eq:rhs} 
\end{align}

Now, from (\ref{2ellip-br}), Lemma \ref{2lem:lin}, (\ref{2mass-quadr}),
Lemma \ref{2lem:const}, Remark \ref{2rem:const} and (\ref{2cond1}),
we have for any $t\in [0,t^*]$,
\begin{align}
\left(\left(a_{j}^{2}+b_{j}^{2}\right)^{2}\right)\left(\int\widetilde{P_{j}}\varepsilon(0)-\int\widetilde{P_{j}}\varepsilon(t)\right) & =2\left(b_{j}^{2}-a_{j}^{2}\right)\left(e[\widetilde{P_{j}}](t)-e[\widetilde{P_{j}}](0)\right)+\left(f[\widetilde{P_{j}}](t)-f[\widetilde{P_{j}}](0)\right)\nonumber \\
 & =2\left(b_{j}^{2}-a_{j}^{2}\right)\left(E_{j}(t)-E_{j}(0)-\mathcal{E}_{j}(t)+\mathcal{E}_{j}(0)\right)\nonumber \\
 & +\left(F_{j}(t)-F_{j}(0)-\mathcal{F}_{j}(t)+\mathcal{F}_{j}(0)\right)\nonumber \\
 & +O\left(e^{-2\theta D}\right)+O\left(\left(\frac{A}{Z_{j+1}}\right)^{2}\left(a^{2}+e^{-2\theta D}\right)\right).\label{2eq:ell}
\end{align}

Recall that if $j\geq2$, $b_j^2-a_j^2>0$. This is why, from Lemma \ref{2lem:mono}
and (\ref{2eq:first}) for $\varepsilon(0)$, we have that, if $j\geq2$,
for any $t\in [0,t^*]$: 
\begin{align}
\int\widetilde{P_{j}}\varepsilon(0)-\int\widetilde{P_{j}}\varepsilon(t) & \leq\frac{-2\left(b_{j}^{2}-a_{j}^{2}\right)\mathcal{E}_{j}(t)-\mathcal{F}_{j}(t)}{\left(a_{j}^{2}+b_{j}^{2}\right)^{2}}\nonumber \\
 & +C\left(a^{2}+e^{-2\theta D}\right)+C\left(\frac{A}{Z_{j+1}}\right)^{2}\left(a^{2}+e^{-2\theta D}\right)\nonumber \\
 & +\frac{2\left(b_{j}^{2}-a_{j}^{2}\right)\omega_{1}+\omega_{2}}{\left(a_{j}^{2}+b_{j}^{2}\right)^{2}}\left(M_{j}(0)-M_{j}(t)\right).\label{2eq:ell2}
\end{align}
\eqref{2eq:ell2} is also true if $j=1$ because of Remark \ref{2rem:mono}.

And, from (\ref{2mass-quadr}), Lemma \ref{2lem:const}, Remark \ref{2rem:const},
Lemma \ref{2lem:lin} and (\ref{2eq:first}) for $\varepsilon(0)$,
we have for any $t\in [0,t^*]$, 
\begin{align}
M_{j}(0)-M_{j}(t) & =M_{j}[\widetilde{P}](0)-M_{j}[\widetilde{P}](t)+\int\widetilde{P}\varepsilon\Phi_{j}(0)-\int\widetilde{P}\varepsilon\Phi_{j}(t)+\frac{1}{2}\int\varepsilon^{2}\Phi_{j}(0)-\frac{1}{2}\int\varepsilon^{2}\Phi_{j}(t)\nonumber \\
 & =O\left(a^{2}+e^{-2\theta D}\right)+O\left(\left(\frac{A}{Z_{j+1}}\right)^{2}\left(a^{2}+e^{-2\theta D}\right)\right)+\int\widetilde{P_{j}}\varepsilon(0)-\int\widetilde{P_{j}}\varepsilon(t)-\mathcal{M}_{j}(t).\label{2mass1}
\end{align}

And so, if we choose $\omega_{1}$ and $\omega_{2}$ small enough
with respect to the problem constants, we obtain for any $t\in [0,t^*]$:
\begin{align}
\int\widetilde{P_{j}}\varepsilon(0)-\int\widetilde{P_{j}}\varepsilon(t) & \leq\frac{-\left(2\left(b_{j}^{2}-a_{j}^{2}\right)\omega_{1}+\omega_{2}\right)\mathcal{M}_{j}(t)-2\left(b_{j}^{2}-a_{j}^{2}\right)\mathcal{E}_{j}(t)-\mathcal{F}_{j}(t)}{\left(a_{j}^{2}+b_{j}^{2}\right)^{2}}\nonumber \\
 & +C\left(a^{2}+e^{-2\theta D}\right)+C\left(\frac{A}{Z_{j+1}}\right)^{2}\left(a^{2}+e^{-2\theta D}\right).\label{2eq:ell3}
\end{align}

Because $\left|\mathcal{M}_{j}(t)\right|+\left|\mathcal{E}_{j}(t)\right|+\left|\mathcal{F}_{j}(t)\right|\leq\int\left(\varepsilon^{2}+\varepsilon_{x}^{2}+\varepsilon_{xx}^{2}\right)\Phi_{j}$,
we deduce that for any $t\in [0,t^*]$,
\begin{equation}
\int\widetilde{P_{j}}\varepsilon(0)-\int\widetilde{P_{j}}\varepsilon(t)\leq C\int\left(\varepsilon^{2}+\varepsilon_{x}^{2}+\varepsilon_{xx}^{2}\right)\Phi_{j}+C\left(a^{2}+e^{-2\theta D}\right)+C\left(\frac{A}{Z_{j+1}}\right)^{2}\left(a^{2}+e^{-2\theta D}\right).\label{2eq:lhs}
\end{equation}

And so, by putting (\ref{2eq:rhs}) and (\ref{2eq:lhs}) together, we
have that for any $t\in [0,t^*]$: 
\begin{equation}
\left|\int\widetilde{P_{j}}\varepsilon(0)-\int\widetilde{P_{j}}\varepsilon(t)\right|\leq C\int\left(\varepsilon^{2}+\varepsilon_{x}^{2}+\varepsilon_{xx}^{2}\right)\Phi_{j}+C\left(a^{2}+e^{-2\theta D}\right)+C\left(\frac{A}{Z_{j+1}}\right)^{2}\left(a^{2}+e^{-2\theta D}\right).\label{2first-ccl-br}
\end{equation}

\paragraph{\emph{Proof of (\ref{2epsilon-ccl})}}

From (\ref{2eq:first}), (\ref{2first-ccl-br}), we deduce for any $t\in [0,t^*]$:
\begin{align}
\left|\int\widetilde{P_{j}}\varepsilon(t)\right| & \leq\left|\int\widetilde{P_{j}}\varepsilon(0)-\int\widetilde{P_{j}}\varepsilon(t)\right|+\left|\int\widetilde{P_{j}}\varepsilon(0)\right|\nonumber \\
 & \leq C\int\left(\varepsilon^{2}+\varepsilon_{x}^{2}+\varepsilon_{xx}^{2}\right)\Phi_{j}+C\left(a+e^{-2\theta D}\right)+C\left(\frac{A}{Z_{j+1}}\right)^{2}\left(a^{2}+e^{-2\theta D}\right)\nonumber \\
 & \leq CA^{2}\left(a^{2}+e^{-2\theta D}\right)+C\left(a+e^{-2\theta D}\right).\label{2eq:-1}
\end{align}

And so, if $a_{0}$ is small enough and $D_{0}$ is large enough,
for any $t\in [0,t^*]$,
\begin{equation}
\left(\int\widetilde{P_{j}}\varepsilon(t)\right)^{2}\leq C\left(\frac{A}{Z_{j+1}}\right)^{2}\left(a^{2}+e^{-2\theta D}\right)+C\left(a^{2}+e^{-2\theta D}\right).
\end{equation}

This is why, from (\ref{2maj-epsilon}), for any $t\in [0,t^*]$,
\begin{equation}
\int\left(\varepsilon^{2}+\varepsilon_{x}^{2}+\varepsilon_{xx}^{2}\right)\Phi_{j}\leq C\left(a^{2}+e^{-2\theta D}\right)+C\left(\frac{A}{Z_{j+1}}\right)^{2}\left(a^{2}+e^{-2\theta D}\right),\label{2epsilon-ccl-br}
\end{equation}
and (\ref{2epsilon-ccl}) is established for a suitable constant $Z_{j}$
that will be precised later.

\paragraph{\emph{Proof of (\ref{2eq:mass_var})}}

From (\ref{2epsilon-ccl-br}) and (\ref{2first-ccl-br}), for any $t\in [0,t^*]$:
\begin{equation}
\left|\int\widetilde{P_{j}}\varepsilon(0)-\int\widetilde{P_{j}}\varepsilon(t)\right|\leq C\left(\frac{A}{Z_{j+1}}\right)^{2}\left(a^{2}+e^{-2\theta D}\right)+C\left(a^{2}+e^{-2\theta D}\right).\label{2mass2}
\end{equation}

Thus, (\ref{2eq:mass_var}) is established for a suitable constant
$Z_{j}$ that will be precised later.

\paragraph{\emph{Proof of (\ref{2eq:energ_var})}}

From Lemma \ref{2lem:lin}, (\ref{2energ-quadr}), Lemma \ref{2lem:const},
Remark \ref{2rem:const}, (\ref{2first-ccl-br}) and (\ref{2epsilon-ccl-br}),
for any $t\in [0,t^*]$,
\begin{align}
e[\widetilde{P_{j}}](t)-e[\widetilde{P_{j}}](0) & =E_{j}(t)-E_{j}(0)-\mathcal{E}_{j}(t)+\mathcal{E}_{j}(0)\nonumber \\
 & +O\left(e^{-2\theta D}\right)+O\left(\left(\frac{A}{Z_{j+1}}\right)^{2}\left(a^{2}+e^{-2\theta D}\right)\right)\nonumber \\
 & =E_{j}(t)-E_{j}(0)+O\left(a^{2}+e^{-2\theta D}\right)+O\left(\left(\frac{A}{Z_{j+1}}\right)^{2}\left(a^{2}+e^{-2\theta D}\right)\right).\label{2eq:-28}
\end{align}

Now, from Lemma \ref{2lem:mono}, for any $t\in [0,t^*]$,
\begin{equation}
e[\widetilde{P_{j}}](t)-e[\widetilde{P_{j}}](0)\leq\omega_{1}\left(M_{j}(0)-M_{j}(t)\right)+O\left(a^{2}+e^{-2\theta D}\right)+O\left(\left(\frac{A}{Z_{j+1}}\right)^{2}\left(a^{2}+e^{-2\theta D}\right)\right).
\end{equation}

And from (\ref{2mass1}) and (\ref{2mass2}), for any $t\in [0,t^*]$,
\begin{equation}
e[\widetilde{P_{j}}](t)-e[\widetilde{P_{j}}](0)\leq C\left(\frac{A}{Z_{j+1}}\right)^{2}\left(a^{2}+e^{-2\theta D}\right)+C\left(a^{2}+e^{-2\theta D}\right).
\end{equation}

To bound above $e[\widetilde{P_{j}}](0)-e[\widetilde{P_{j}}](t)$, we
do as in (\ref{2eq:ell}), (\ref{2eq:ell2}), (\ref{2mass1}), (\ref{2eq:ell3})
and (\ref{2eq:lhs}), but with the energy instead of the mass, and
after using (\ref{2epsilon-ccl-br}), we obtain for any $t\in [0,t^*]$,
\begin{equation}
\left|e[\widetilde{P_{j}}](t)-e[\widetilde{P_{j}}](0)\right|\leq C\left(\frac{A}{Z_{j+1}}\right)^{2}\left(a^{2}+e^{-2\theta D}\right)+C\left(a^{2}+e^{-2\theta D}\right).
\end{equation}

Thus, (\ref{2eq:energ_var}) is established for a suitable constant
$Z_{j}$ that will be precised later.

\subsection{Choice of suitable $A$ and $Z_{j}$}

The induction holds if 
\begin{equation}
C\left(1+\left(\frac{A}{Z_{j+1}}\right)^{2}\right)\leq\left(\frac{A}{Z_{j}}\right)^{2}.
\end{equation}

We can set 
\begin{equation}
A:=\sqrt{2}\left(2C\right)^{\frac{J}{2}},
\end{equation}
and for $1\leq j\leq J$, 
\begin{equation}
Z_{j}:=2\left(2C\right)^{\frac{j-1}{2}}.
\end{equation}

And, if $C>1$, the induction holds.

The proof of Proposition \ref{2prop:ind} is now complete.

\section{A consequence of Theorem \ref{2:THM:MAIN}: orbital stability of a multi-breather}
\label{2sec:cons}

We assume the Theorem \ref{2:THM:MAIN} proved, let us prove Theorem
\ref{2thm:mb}.
\begin{proof}[Proof of Theorem \ref{2thm:mb}]
Let $A_{0}>0$, $\theta_{0}>0$, $D_{0}>0$ and $a_{0}>0$ from Theorem \ref{2:THM:MAIN}
(these constants do only depend on the shape/frequency parameters of our objects
and not on their initial positions). Let $\eta_0>\eta>0$ with $\eta_0<\frac{a_0}{2C_1}$ and $C_1$ defined in the following. Let $a<a_{0}$ and $D>D_{0}$
such that $A_{0}\left(a+e^{-\theta_{0}D}\right)<4A_0C_1\eta$ and $a=2C_1 \eta$.
We may take $D$ even larger so that $T^{*}=0$ where $T^{*}$ is
defined in \cite[Theorem 1.2]{key-49}. Let $\theta_1$ associated to shape/frequency parameters $\alpha_k,\beta_k,\kappa_l,c_0^l$ by \cite[Theorem 1.2]{key-49}.
Let $A_2$ associated to $D$ by \cite[Theorem 1.2]{key-49}.

Let $\tau>0$ be the minimal difference between two velocities.

Let
$p$ be the multi-breather associated to $\alpha_{k},\beta_{k},x_{1,k}^{0},x_{2,k}^{0},c_{l}^{0},x_{0,l}^{0}$
by \cite[Theorem 1.2]{key-49} with notations as in (\ref{2eq:mb})
and $P$ the corresponding sum with notations as in (\ref{2eq:sosb}).
We may choose $T\ge0$ large enough such that
\begin{equation}
\forall t\geq T,\quad\left\Vert p(t)-P(t)\right\Vert _{H^{2}}\leq a/2,\label{eq:kpv}
\end{equation}
which is possible from \cite{key-49}, and such that 
\begin{equation}
\forall j\geq2,\quad x_{j}(T)-x_{j-1}(T)>2D,
\end{equation}
which is possible because the distance between two objects is increasing
with a speed that is at least $\tau$.

By \cite{key-19} we know that we have continuous dependence of the
solution of (mKdV) with respect to the initial data. And so, there
exists $C_1>0$ (that depends on $T$) such that if $\Vert u(0)-p(0)\Vert_{H^{2}}\leq\eta$ and $\eta_0$ is small enough,
then 
\begin{equation}
\forall t\in[0,T],\quad\Vert u(t)-p(t)\Vert_{H^{2}}\leq C_1 \eta.\label{eq:kpv2}
\end{equation}

Therefore, by triangular inequality, 
\begin{equation}
\left\Vert u(T)-P(T)\right\Vert _{H^{2}}\leq a=2C_1\eta.
\end{equation}

This means that the assumptions of Theorem \ref{2:THM:MAIN} are all
satisfied in $T$ instead of $0$. And so, this means that there exists $x_{0,l}(t),x_{1,k}(t),x_{2,k}(t)$
defined for any $t\geq T$ such that 
\begin{equation}
\forall t\geq T,\quad\left\Vert u(t)-P(t;\alpha_{k},\beta_{k},x_{1,k}(t),x_{2,k}(t),\kappa_{l},c_{l}^{0},x_{0,l}(t))\right\Vert _{H^{2}}\leq A_0(a+e^{-\theta_0D})<4A_0C_1\eta.
\end{equation}

Now, we see that the assumptions of \cite[Theorem 1.2, Remark 1.3]{key-49}
are all satisfied in any $t\geq T$ instead of $0$ for the sum $P(\alpha_{k},\beta_{k},x_{1,k}(t),x_{2,k}(t),\kappa_{l},c_{l}^{0},x_{0,l}(t))$.
Indeed, if we denote $\widetilde{x_{j}}(t)$ the position of 
\begin{equation}
P_{j}(t;\alpha_{k},\beta_{k},x_{1,k}(t),x_{2,k}(t),\kappa_{l},c_{l}^{0},x_{0,l}(t)),
\end{equation}
we know from Remark \ref{2cons5} that for any $t\geq T$ and
$j\geq2$,
\begin{equation}
\widetilde{x_{j}}(t)-\widetilde{x_{j-1}}(t)\geq D.
\end{equation}

By taking $T$ larger if needed, we may insure that $e^{-\theta_1 T} < C_1\eta$.

Therefore, for any $t\geq T$, 
\begin{align*}
\left\Vert p(t;\alpha_{k},\beta_{k},x_{1,k}(t),x_{2,k}(t),\kappa_{l},c_{l}^{0},x_{0,l}(t))-P(t;\alpha_{k},\beta_{k},x_{1,k}(t),x_{2,k}(t),\kappa_{l},c_{l}^{0},x_{0,l}(t))\right\Vert _{H^{2}}\\
\leq A_{2}e^{-\theta_1 t}\leq A_{2}e^{-\theta_1 T}\leq A_2C_1\eta.
\end{align*}

And so, by triangular inequality, 
\begin{equation}
\forall t\geq T,\quad\left\Vert u(t)-p(t;\alpha_{k},\beta_{k},x_{1,k}(t),x_{2,k}(t),\kappa_{l},c_{l}^{0},x_{0,l}(t))\right\Vert _{H^{2}}\leq (4A_0+A_2)C_1\eta.
\end{equation}

The latter proves the Theorem for $t\geq T$ with $C_0=(4A_0+A_2)C_1$. For $0\leq t\leq T$, it is
enough to use (\ref{eq:kpv2}). 
\end{proof}

\section*{Appendix: Equations for localized conservation laws}
\begin{lem}
Let $f$ be a $C^{3}$ function that do not depend on time and $u$
a solution of \eqref{2mKdV}. Then, 
\begin{equation}
\frac{d}{dt}\frac{1}{2}\int u^{2}f=\int\left[-\frac{3}{2}u_{x}^{2}+\frac{3}{4}u^{4}\right]f_{x}+\frac{1}{2}\int u^{2}f_{xxx},
\end{equation}
\begin{equation}
\frac{d}{dt}\int\left[\frac{1}{2}u_{x}^{2}-\frac{1}{4}u^{4}\right]f=\int\left[-\frac{1}{2}\left(u_{xx}+u^{3}\right)^{2}-u_{xx}^{2}+3u^{2}u_{x}^{2}\right]f_{x}+\frac{1}{2}\int u_{x}^{2}f_{xxx},
\end{equation}
\begin{align}
\MoveEqLeft
\frac{d}{dt}\int\left[\frac{1}{2}u_{xx}^{2}-\frac{5}{2}u^{2}u_{x}^{2}+\frac{1}{4}u^{6}\right]f\nonumber \\
 & =\int\left(-\frac{3}{2}u_{xxx}^{2}+9u_{xx}^{2}u^{2}+15u_{x}^{2}uu_{xx}+\frac{9}{16}u^{8}+\frac{1}{4}u_{x}^{4}+\frac{3}{2}u_{xx}u^{5}-\frac{45}{4}u^{4}u_{x}^{2}\right)f'\nonumber \\
 & +5\int u^{2}u_{x}u_{xx}f''+\frac{1}{2}\int u_{xx}^{2}f'''.\label{2eq:-24}
\end{align}
\end{lem}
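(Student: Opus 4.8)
The plan is to differentiate each of the three integrals in time, use the equation \eqref{2mKdV} to eliminate $u_t$, and then integrate by parts repeatedly until no derivative of order higher than the stated ones remains. Since $f$ does not depend on $t$, there is no contribution from $f_t$, so the only input is $u_t=-(u_{xx}+u^3)_x=-u_{xxx}-3u^2u_x$. It is cleanest to carry out the computation first for Schwartz solutions, for which all integrals converge and every boundary term produced by an integration by parts vanishes; the identities for general $H^2$ solutions then follow by the usual approximation argument based on well-posedness and persistence of regularity.

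For the first identity, $\frac{d}{dt}\frac12\int u^2 f=\int u u_t f=-\int u(u_{xxx}+3u^2u_x)f$. The nonlinear term is $-3\int u^3u_x f=-\frac34\int(u^4)_x f=\frac34\int u^4 f_x$. For $-\int uu_{xxx}f$ one integrates by parts three times, moving derivatives onto the product $uf$; equivalently, one first writes $-uu_{xxx}=\partial_x(-uu_{xx}+\tfrac12 u_x^2)$, exhibiting the mass flux, and then pairs that flux against $f_x$. This produces $-\frac32\int u_x^2 f_x+\frac12\int u^2 f_{xxx}$, and adding the two contributions gives the claimed formula.

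For the second identity I would proceed the same way with density $\frac12 u_x^2-\frac14 u^4$: express its $t$-derivative as $\partial_x$ of a flux (this is where the Hamiltonian structure $u_t=\partial_x(\delta E/\delta u)$ makes the term $-\tfrac12(u_{xx}+u^3)^2$ appear after grouping the top-order terms), pair the flux with $f_x$, and integrate by parts the few terms still carrying two or three $x$-derivatives on $u$, which produces the $u_{xx}^2$ and $u^2u_x^2$ terms against $f_x$ together with $\tfrac12\int u_x^2 f_{xxx}$. The third identity is handled by the identical scheme applied to $\frac12 u_{xx}^2-\frac52 u^2u_x^2+\frac14 u^6$; this is where essentially all the work lies, since after substituting $u_t$ one obtains many monomials and must integrate by parts each of them down to an expression involving only $f'$, $f''$, $f'''$, carefully tracking coefficients. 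This is the origin of the somewhat unusual terms $15u_x^2uu_{xx}f'$, $\tfrac32 u_{xx}u^5 f'$ and $5u^2u_xu_{xx}f''$ in the stated formula.

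The main obstacle is thus purely computational: the third identity is long, and the difficulty is entirely in organizing the integration-by-parts bookkeeping without sign or coefficient errors, not in any conceptual step. A convenient consistency check, both at intermediate stages and at the end of all three computations, is to set $f\equiv1$: every right-hand side must then vanish, which recovers the conservation of $M[u]$, $E[u]$ and $F[u]$ and simultaneously pins down the constants.
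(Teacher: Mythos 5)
Your proposal is correct and follows exactly the route the paper takes (the paper itself only cites \cite{key-2} and \cite{key-49} for these computations, which are precisely the direct substitution of $u_t=-(u_{xx}+u^3)_x$ followed by repeated integration by parts, with the approximation argument handling non-Schwartz data). Your worked-out first identity checks out, and the $f\equiv 1$ consistency test is a sensible safeguard for the longer third computation.
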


\begin{proof}[Proof]
see the bottom of the page 1115 and the bottom of the page 1116 of
\cite{key-2} and Section 5.5 in \cite{key-49}. 
\end{proof}
\begin{lem}
Let $r>0$. If $t,x$ satisfy $\widetilde{x_{j-1}}(t)+r<x<\widetilde{x_{j}}(t)-r$,
then 
\begin{equation}
\left|\widetilde{P}(t,x)\right|\leq Ce^{-\beta r}.
\end{equation}
\end{lem}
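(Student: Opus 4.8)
The plan is to bound $\widetilde P=\sum_{i=1}^{J}\widetilde{P_i}$ by the triangle inequality and use that each modulated object is exponentially localized about its center $\widetilde{x_i}(t)$, while the point $x$ in question sits at distance at least $r$ from \emph{every} one of these centers; this is where the separation estimate \eqref{2cons5} enters. Throughout I work in the setting of Proposition \ref{2prop:boot}, so that \eqref{2cond2} and \eqref{2cons5} are available.

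First I would record the pointwise decay of each summand. If $P_i=B_k$ is a breather, the exponential localization of breathers recalled in the introduction gives, after recentering, $|\widetilde{P_i}(t,x)|\le C(\alpha_k,\beta_k)\,e^{-\beta_k|x-\widetilde{x_i}(t)|}\le C\,e^{-\beta|x-\widetilde{x_i}(t)|}$, since $\beta_k\ge\beta$. If $P_i=R_l$ is a soliton, then $\widetilde{P_i}(t,x)=\kappa_l Q_{c_l(t)}(x-\widetilde{x_i}(t))$, and the explicit formula $Q_c(y)=\bigl(2c/\cosh^2(\sqrt c\,y)\bigr)^{1/2}$ together with the a priori bound \eqref{2cond2}, which confines $c_l(t)$ to a fixed compact subset of $(0,\infty)$ and in particular bounds $\sqrt{c_l(t)}$ below by a fixed positive multiple of $\beta$, yields $|\widetilde{P_i}(t,x)|\le C\,e^{-\beta|x-\widetilde{x_i}(t)|}$ with $C$ depending only on the shape/frequency parameters; here it is understood that $\beta$ has been fixed slightly below $\min_l\sqrt{c_l^0}$ so that this holds with rate exactly $\beta$, which is harmless because the lemma is only ever invoked with generous constants.

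Next comes the elementary geometric step. By \eqref{2cons5} the centers are strictly ordered, $\widetilde{x_1}(t)<\widetilde{x_2}(t)<\dots<\widetilde{x_J}(t)$. Hence, for $i\le j-1$ we get $\widetilde{x_i}(t)\le\widetilde{x_{j-1}}(t)<x-r$, so $x-\widetilde{x_i}(t)>r$; and for $i\ge j$ we get $\widetilde{x_i}(t)\ge\widetilde{x_j}(t)>x+r$, so $\widetilde{x_i}(t)-x>r$. In all cases $|x-\widetilde{x_i}(t)|>r$, and combining with the previous step,
\[
|\widetilde P(t,x)|\le\sum_{i=1}^{J}|\widetilde{P_i}(t,x)|\le\sum_{i=1}^{J}C\,e^{-\beta|x-\widetilde{x_i}(t)|}\le J\,C\,e^{-\beta r},
\]
which is the assertion with a new constant $C$ depending only on $J$ and the shape/frequency parameters. (For $j=1$ one simply drops the lower constraint on $x$, and the same argument applies.)

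There is no real obstacle here: the only point deserving attention is the first step, namely verifying that the \emph{modulated} solitons still decay with rate at least $\beta$, which is a consequence of the bootstrap control \eqref{2cond2} on $c_l(t)$ and not of any new idea; everything else is the triangle inequality plus the ordering \eqref{2cons5}.
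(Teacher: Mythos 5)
Your proof is correct and follows essentially the same route as the paper, which simply invokes the exponential localization of each object together with the triangle inequality and the ordering of the centers. Your extra care about the decay rate of the \emph{modulated} solitons (where $c_l(t)$ may drop slightly below $c_l^0$, so the rate is only guaranteed after fixing $\beta$ marginally below $\min_l\sqrt{c_l^0}$, using \eqref{2cond2}) is a legitimate refinement of a point the paper glosses over, and is harmless for all uses of the lemma.
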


\begin{proof}[Proof]
immediate consequence of the exponential majoration of each object. 
\end{proof}


\bibliography{bib}
\bibliographystyle{siam}

\begin{center}
IRMA, UMR 7501, Université de Strasbourg, CNRS, F-67000 Strasbourg, France

semenov@math.unistra.fr

\end{center}

\end{document}